\newtheorem{theorem}{Theorem}[section]
\newtheorem{corollary}[theorem]{Corollary}
\newtheorem{lemma}[theorem]{Lemma}
\newtheorem{example}[theorem]{Example}
\theoremstyle{definition}
\newtheorem{definition}[theorem]{Definition}
\newtheorem{remark}[theorem]{\textbf{Remark}}
\numberwithin{equation}{section}
\newcommand{\e}{{\rm e}}
\newcommand{\E}{{\mathsf E}}
\renewcommand{\P}{{\mathsf P}}
\newcommand{\Q}{{\mathsf Q}}
\newcommand{\R}{{\mathbb R}}
\newcommand{\N}{{\mathbb N}}
\newcommand{\U}{{\mathbb U}}
\newcommand{\Ccal}{{\mathcal C}}
\newcommand{\Ecal}{{\mathcal E}}
\newcommand{\Fcal}{{\mathcal F}}
\newcommand{\Gcal}{{\mathcal G}}
\newcommand{\Lcal}{{\mathcal L}}
\newcommand{\Mcal}{{\mathcal M}}
\newcommand{\Pcal}{{\mathcal P}}
\newcommand{\Qcal}{{\mathcal Q}}
\newcommand{\Xcal}{{\mathcal X}}
\newcommand{\fdot}{{\,\cdot\,}}
\DeclareMathOperator{\conv}{conv}
\DeclareMathOperator{\supp}{supp}
\DeclareMathOperator{\dom}{dom}
\DeclareMathOperator{\cone}{cone}
\DeclareMathOperator{\ri}{ri}
\DeclareMathOperator{\range}{range}
\begin{document}

\title{Testing hypotheses generated by constraints}
\author{
Martin Larsson\footnote{Department of Mathematical Sciences, Carnegie Mellon University, \texttt{larsson@cmu.edu}} \and
Aaditya Ramdas\footnote{Departments of Statistics and ML, Carnegie Mellon University, \texttt{aramdas@cmu.edu}} \and
Johannes Ruf\footnote{Department of Mathematics, London School of Economics, \texttt{j.ruf@lse.ac.uk}} \\[2ex]
}
\maketitle

\begin{abstract}
E-variables are nonnegative random variables with expected value at most one under any distribution from a given null hypothesis. 
Every nonasymptotically valid test can be obtained by thresholding some e-variable. As such, e-variables arise naturally in applications in statistics and operations research, and a key open problem is to characterize their form. 
We provide a complete solution to this problem for hypotheses generated by constraints---a broad and natural framework that encompasses many hypothesis classes occurring in practice. 
Our main result is an abstract representation theorem that describes all e-variables for any hypothesis defined by an arbitrary collection of measurable constraints.
We instantiate this general theory for three important classes: hypotheses generated by finitely many constraints, one-sided sub-$\psi$ distributions (including sub-Gaussian distributions), and distributions constrained by group symmetries. In each case, we explicitly characterize all e-variables as well as all admissible e-variables. 
Numerous examples are treated, including constraints on moments, quantiles, and conditional value-at-risk (CVaR).
Building on these, we prove existence and uniqueness of optimal e-variables under a large class of expected utility-based objective functions used for optimal decision making, in particular covering all criteria studied in the e-variable literature to date. \end{abstract}

\section{Introduction}

Fix a measurable space $\Xcal$ and let $\Mcal_1$ denote the set of all probability measures on $\Xcal$. Suppose we observe a random datum $X$ with values in $\Xcal$, and consider the null hypothesis that the distribution of $X$ belongs to some set of probability measures $\Pcal \subset \Mcal_1$. 
An e-variable for $\Pcal$ is a nonnegative (possibly infinite) random variable whose expected value under every distribution in $\Pcal$ is at most one. 
The set of all e-variables for  $\Pcal$ is denoted by $\Ecal$:
\[
\Ecal = \left\{\text{all measurable } h \colon \Xcal \to [0,\infty] \text{ such that } \int_\Xcal h d\mu \le 1 \text{ for all } \mu \in \Pcal\right\}.
\]
E-variables have recently been recognized as fundamental objects in a variety of hypothesis testing and inference problems. A rapidly growing body of work uses e-variables as the basis for solving a wide range of problems in statistics and operations research, such as multiple testing, A/B testing, and sequential anytime-valid inference, just to mention a few. Some recent papers include~\cite{wasserman2020universal,vovk2021values,shafer2021testing,grunwald2020safe}
; several other key references appear later in this paper.

To see why e-variables are fundamentally connected to hypothesis testing, observe that every e-variable for $\Pcal$ yields a nonasymptotic level-$\alpha$ test for $\Pcal$: we reject the null when the e-variable exceeds $1/\alpha$. Markov's inequality implies that the type-I error of such a test is at most $\alpha$. Conversely, it is known that every level-$\alpha$ test for $\Pcal$ can be recovered by thresholding some e-variable at $1/\alpha$. 
Consequently, a description of all e-variables leads to a description of all valid tests.
Further, it is desirable to identify e-variables that lead to \emph{powerful} tests. Standard notions of power for e-variables are based on expected utility under an alternative hypothesis \citep{shafer2021testing,grunwald2020safe,larsson2024numeraire}, and finding powerful e-variables amounts to solving optimization problems over the set $\Ecal$. In addition to their importance for statistical analysis, solutions to these problems serve as input for optimal decision making.

For these reasons, it is of interest to characterize $\Ecal$. Indeed, a characterization of $\Ecal$ is effectively a characterization of the set of all tests for $\Pcal$. Furthermore, a low-dimensional parameterization of $\Ecal$ simplifies the task of finding optimal e-variables.

In this paper we characterize all e-variables for sets $\Pcal$ that are described by constraints. 
The constraints we consider can be viewed as generalized moment constraints, meaning that certain functions of the observed data must have nonpositive expectation.
This is a very natural class, especially in nonparametric settings. For example, it is common to test whether some functional, such as the mean or variance, lies in some range; this is a constraint. It is also common to specify classes of distributions whose moments or supports are restricted in some way; these are also constraints.
Thus, the classes $\Pcal$ considered in this paper are quite general, and special cases of such ``constrained hypotheses'' have been frequently considered in the literature. 
Our work characterizes all possible e-variables for such classes, without restrictions on the constraints (there could be uncountably many, they can be discontinuous functions, etc.) and without restrictions on the underlying measure space (we do not require any topological properties such as compactness, closedness, finite dimensionality, etc.).
In a nutshell, our main results show that every e-variable can be expressed in terms of conic combinations of the constraint functions.

While we will cite papers that study special cases of $\Pcal$ in later sections of the paper that instantiate our general results, we note that the only general attempt to characterize e-variables for constrained hypotheses appears in a recent work by~\cite{clerico2024optimalevaluetestingproperly}. Among our contributions are greatly generalized versions of the results in that paper.
For example, two of their most important restrictions are that $\Xcal$ is a closed subset of $\R^n$ and that constraint functions are continuous, both of which we do not need. Their paper has other minor restrictions, for example they disallow e-variables from taking the value $+\infty$, and they need $0$ to be in the relative interior of the convex hull of the constraints; we avoid these. Their Lemma~4 shows that these restrictions imply that $\Pcal$ must include distributions that put mass on arbitrary points in $\Xcal$; our Theorem~\ref{T_finite_constraint_set_reference_measure} allows imposing a reference measure like Lebesgue. Finally, and critically, they handle only finitely many constraints. We would also like to highlight the nonasymptotic nature of our work. This is in contrast to, for example, \citet{eb0770a2-159e-3683-976d-c3be11a128bb}, who design asymptotically valid tests for finitely constrained hypotheses.

Lastly, although we frame our results as being about a single observation from the sample space $\Xcal$, we haste to emphasize two things. First, this ``single observation'' could be a finite or infinite sample, say if $\Xcal = \R^n$ or $\Xcal = \R^\infty$, and our results still apply. Second, given an e-variable $h(x)$ for $\Pcal$, the product $\prod_{i=1}^n h(X_i)$ is an e-variable for any i.i.d.\ sequence $X_1,\ldots,X_n$ sampled from a distribution in $\Pcal$. More generally, it is an e-variable provided that for all $i$, the conditional distribution of $X_{i+1}$ given $X_1,\ldots,X_i$ belongs to $\Pcal$. Thus our results immediately yield explicit e-variables in these settings as well, although we do not characterize \emph{all} e-variables in this case. This and other challenging open problems are discussed in Section~\ref{S_summary}.

\paragraph{Paper outline.} Section~\ref{S_gen_hyp} formally defines hypotheses generated by constraints, introduces the mathematical setting needed to analyze them, and presents the main result of this paper, Theorem~\ref{T_abs_rep}, which gives an abstract description of the set of all e-variables for any hypothesis generated by constraints. An important role is played by the theory of dual pairs of vector spaces, and in particular the notion of weak closure; we review these in the appendix.

Sections~\ref{S_fin_gen_hyp}--\ref{S_group_symmetry} instantiate the abstract theorem for three particular hypothesis classes of practical interest. In Section~\ref{S_fin_gen_hyp}, hypotheses generated by finitely many constraints are considered. The main results are Theorem~\ref{T_finitely_generated} and Corollary~\ref{C_evar_max_finite}, and several special cases are discussed thereafter. 
Section~\ref{S_sub_psi} tackles an important and general class of distributions generated by uncountably many constraints, that of one-sided sub-$\psi$ distributions, which includes the well-studied sub-Gaussian case. The main result is Theorem~\ref{T_sub_psi_rep}.
Section~\ref{S_group_symmetry} is devoted to another nontrivial and general class of distributions, those that are constrained to remain invariant under a group of symmetries (such as  exchangeable distributions). The main results are Theorems~\ref{T_symmetric_rep} and~\ref{T_group_inv_constr_set}.

Up to this point in the paper, the focus is on obtaining explicit descriptions of the set of \emph{all} e-variables for a given hypothesis. However, in statistical applications one wishes to work with \emph{admissible} e-variables whenever possible. This is the topic of Section~\ref{S_admissibility}, where \emph{minimal complete classes} of (admissible) e-variables are introduced. It is shown that such classes do not exist in general, but do exist in the setting of this paper under mild additional assumptions. The main result is Theorem~\ref{T_minimal_complete_class}.

Section~\ref{S:optimal} tackles the problem of finding \emph{optimal} e-variables for finitely generated and sub-$\psi$  hypotheses. For a general class of objective functions, existence of optimizers is established in Theorems~\ref{T_opt_evar_finite} and~\ref{T_optimal_sub_psi}, and a simple uniqueness criterion is provided in Theorem~\ref{T_optimal_uniqueness}. These results cover all objective functions studied in the e-variable literature to date, as well as criteria such as the continuous variational preferences of \citet{MR2268407}. The results in this section depend crucially on the representation theorems developed earlier.

The last part of the paper contains further discussion and amplifications. Section~\ref{S_unions} studies hypotheses that are unions of hypotheses generated by constraints. 
A nontrivial example involving conditional value-at-risk (CVaR) illustrates its utility. 
Section~\ref{S_relaxed_integrability} considers hypotheses with a relaxed integrability condition. It is shown that for finitely generated hypotheses, this relaxation makes no difference to the set of associated e-variables, but leaves an open question in the infinite case.
Section~\ref{S_summary} concludes with a discussion of further open problems. Appendix~\ref{S_functional_analysis} contains some basic results from topology and functional analysis that are used in the paper, for example providing background for our convergence results that rely on \emph{nets} instead of sequences, and reviewing a key bipolar theorem that underlies our main results.

\paragraph{Notation.} 
We denote by $\Lcal$ and $\Mcal$ the space of all real-valued measurable functions and finite signed measures, respectively, on our measurable space $\Xcal$. We write $|\mu|$ for the total variation measure of any $\mu \in \Mcal$. Given a subset $\Pcal \subset \Mcal_1$ (the probability measures on $\Xcal$), we say that a measurable set $A \subset \Xcal$ is $\Pcal$-negligible if $\mu(A) = 0$ for all $\mu \in \Pcal$. A pointwise property of a function $f \in \Lcal$  holds $\Pcal$-quasi-surely, abbreviated $\Pcal$-q.s., if the set where it fails is $\Pcal$-negligible.
A function $f \in \Lcal$ is called a $\Pcal$-version of another function $g \in \Lcal$ if $f = g$, $\Pcal$-q.s.
For subsets $A, B$ of a vector space, we write $A - B = \{a - b \colon a \in A, b \in B\}$. The set of natural numbers is $\N=\{1,2,\ldots\}$.



\section{Hypotheses generated by constraints} \label{S_gen_hyp}

\begin{definition} \label{D_constrained_hypotheses}
A \emph{constraint set} is any nonempty set of functions $\Phi \subset \Lcal$. The elements of $\Phi$ are called \emph{constraint functions}. The \emph{hypothesis generated by $\Phi$} is the (possibly empty) set $\Pcal$ of probability measures given by
\begin{equation}\label{eq_hypothesis_integrable}
\Pcal = \left\{\mu \in \Mcal_1 \colon \int_\Xcal |f| d\mu < \infty \text{ and } \int_\Xcal f d\mu \le 0 \text{ for all } f \in \Phi\right\}.
\end{equation}
\end{definition}

Although the hypothesis $\Pcal$ is defined through inequality constraints, it is easy to encode equality constraints by letting $\Phi$ contain both $f$ and $-f$.
Given a constraint set $\Phi$ and the hypothesis $\Pcal$ that it generates, we define the vector spaces
\begin{align*}
\Lcal^\Phi &= \left\{f \in \Lcal \colon \int_\Xcal |f| d\mu < \infty \text{ for all } \mu \in \Pcal\right\}, \\
\Mcal^\Phi &= \left\{\mu \in \Mcal \colon \int_\Xcal |f| d|\mu| < \infty \text{ for all } f \in \Lcal^\Phi\right\}.
\end{align*}
These spaces serve as a useful arena for our theory because $\Lcal^\Phi$ contains all bounded measurable functions, all constraint functions, and all (finite) e-variables. Moreover, $\Mcal^\Phi$ contains the hypothesis $\Pcal$ and all Dirac measures, and its elements integrate all the functions in $\Lcal^\Phi$ by construction. It is also convenient to introduce the \emph{quasi-surely positive cone}, 
\[
\Lcal^\Phi_p = \{f \in \Lcal^\Phi \colon f \ge 0, \text{ $\Pcal$-q.s.}\},
\]
with subscript ``$p$'' for ``positive'', as well as the set of functions in $\Lcal^\Phi$ that are nonnegative \emph{everywhere},
\[
\Lcal^\Phi_+ = \{f \in \Lcal^\Phi \colon f \ge 0\}.
\]
These two sets are different in general.
Finally, we define
\begin{equation} \label{eq_Ccal_new}
\Ccal = \cone(\Phi) - \Lcal^\Phi_p.
\end{equation}
This is the convex cone consisting of all functions that are quasi-surely dominated by a conic combination of constraint functions. In symbols, $\Ccal$ consists of all $f = g - h$ with $g \in \cone(\Phi)$ and $h \in \Lcal^\Phi_p$, or equivalently, $f \le g$, $\Pcal$-q.s.
Note that $\cone(\Phi)$ denotes the set of all (finite) conic combinations of elements of $\Phi$, and as such is convex.

The elements of $\Ccal$ have nonpositive expectation under every measure in $\Pcal$. The following result shows that the \emph{weak closure} of $\Ccal$ actually consists of \emph{all} functions in $\Lcal^\Phi$ with this property. This immediately leads to a description of the set of all e-variables for $\Pcal$. Here the weak closure refers to the topology $\sigma(\Lcal^\Phi, \Mcal^\Phi)$ induced by the dual pairing $\langle f, \mu\rangle = \int_\Xcal f d\mu$; see Appendix~\ref{S_dual_pairs}. 

\begin{theorem} \label{T_abs_rep}
\begin{enumerate}
\item\label{T_abs_rep_1} A function $f \in \Lcal^\Phi$ satisfies $\int_\Xcal f d\mu \le 0$ for all $\mu \in \Pcal$ if and only if $f$ belongs to $\overline\Ccal$, the weak closure of $\Ccal$. 
\item\label{T_abs_rep_2} In particular, the set $\Ecal$ of all e-variables for $\Pcal$ consists precisely of those $[0,\infty]$-valued measurable functions that are $\Pcal$-q.s.\ equal to $1 + f$ for some $f \in \overline\Ccal$.
\end{enumerate}
\end{theorem}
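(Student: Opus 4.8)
The plan is to derive both parts from the bipolar theorem applied to the convex cone $\Ccal$ in the dual pair $(\Lcal^\Phi,\Mcal^\Phi)$, after computing the polar cone $\Ccal^\circ$ explicitly in terms of $\Pcal$.

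First I would record the structural facts that get the machinery started: the pairing $\langle\mu,f\rangle=\int_\Xcal f\,d\mu$ makes $(\Lcal^\Phi,\Mcal^\Phi)$ into a separating dual pair — it is bilinear and well defined by construction, it separates points of $\Lcal^\Phi$ because all Dirac measures belong to $\Mcal^\Phi$, and it separates points of $\Mcal^\Phi$ because all indicator functions belong to $\Lcal^\Phi$ — and moreover $\Pcal\subseteq\Mcal^\Phi$, $\Phi\subseteq\Lcal^\Phi$, and $\Ccal=\cone(\Phi)-\Lcal^\Phi_p$ is a convex cone containing $0$. For the easy half of part~\ref{T_abs_rep_1}, every $f=g-h\in\Ccal$ with $g\in\cone(\Phi)$ and $h\in\Lcal^\Phi_p$ satisfies $\int_\Xcal f\,d\mu\le 0$ for each $\mu\in\Pcal$, since $\int_\Xcal g\,d\mu\le 0$ (a nonnegative combination of constraint functions) while $\int_\Xcal h\,d\mu\ge 0$ (because $h\ge 0$ holds $\mu$-a.e.); and the set $\{f\in\Lcal^\Phi:\langle\mu,f\rangle\le0\text{ for all }\mu\in\Pcal\}$ is closed in $\sigma(\Lcal^\Phi,\Mcal^\Phi)$, being an intersection of closed half-spaces indexed by $\mu\in\Pcal\subseteq\Mcal^\Phi$. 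Hence it contains $\overline\Ccal$.

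The heart of the argument — and the step I expect to be the main obstacle — is the reverse inclusion, which I would obtain by identifying $\Ccal^\circ=\{\mu\in\Mcal^\Phi:\langle\mu,c\rangle\le0\text{ for all }c\in\Ccal\}$. Because $\cone(\Phi)$ and $\Lcal^\Phi_p$ are cones through the origin, $\mu\in\Ccal^\circ$ decouples into (a) $\int_\Xcal f\,d\mu\le 0$ for all $f\in\Phi$, and (b) $\langle\mu,h\rangle\ge 0$ for all $h\in\Lcal^\Phi_p$. Condition (b) is where care is needed, and it is exactly the place where working with the quasi-surely positive cone $\Lcal^\Phi_p$ rather than the everywhere-nonnegative cone $\Lcal^\Phi_+$ matters: testing (b) against indicators $\mathbf 1_A$ forces $\mu\ge 0$, and testing it against $-\mathbf 1_N$ for $\Pcal$-negligible $N$ forces $\mu(N)=0$, while conversely these two properties of $\mu$ give back (b) for every $h\in\Lcal^\Phi_p$. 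So $\Ccal^\circ$ consists precisely of the nonnegative measures in $\Mcal^\Phi$ that vanish on $\Pcal$-negligible sets and satisfy $\int_\Xcal f\,d\mu\le 0$ for all $f\in\Phi$; any such $\mu$ is either $0$ or, after dividing by its total mass $\mu(\Xcal)>0$, a probability measure in $\Pcal$, and conversely every nonnegative multiple of an element of $\Pcal$ lies in $\Ccal^\circ$. Therefore $f\in\Ccal^{\circ\circ}$ iff $\int_\Xcal f\,d\nu\le 0$ for all $\nu\in\Pcal$, and the bipolar theorem (recalled in the appendix), which gives $\overline\Ccal=\Ccal^{\circ\circ}$ for a convex cone through the origin, finishes part~\ref{T_abs_rep_1}. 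The degenerate case $\Pcal=\emptyset$ is consistent: every set is then $\Pcal$-negligible, so $\Ccal=\overline\Ccal=\Lcal^\Phi$ and the stated equivalence is vacuous.

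Part~\ref{T_abs_rep_2} then follows from part~\ref{T_abs_rep_1} with only routine measure-theoretic bookkeeping. If $h\colon\Xcal\to[0,\infty]$ is measurable and $h=1+f$ $\Pcal$-q.s.\ for some $f\in\overline\Ccal\subseteq\Lcal^\Phi$, then $f$ is real-valued and $\mu$-integrable under each $\mu\in\Pcal$, and since $\mu$ annihilates the $\Pcal$-negligible set $\{h\ne 1+f\}$ and $\int_\Xcal f\,d\mu\le 0$ by part~\ref{T_abs_rep_1}, we get $\int_\Xcal h\,d\mu=1+\int_\Xcal f\,d\mu\le 1$; thus $h\in\Ecal$. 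Conversely, for $h\in\Ecal$ the bound $\int_\Xcal h\,d\mu\le 1<\infty$ for all $\mu\in\Pcal$ forces $\{h=\infty\}$ to be $\Pcal$-negligible, so $\bar h:=h\,\mathbf 1_{\{h<\infty\}}$ is a nonnegative real-valued function equal to $h$ $\Pcal$-q.s.\ with $\int_\Xcal\bar h\,d\mu\le 1$ for all $\mu\in\Pcal$; hence $\bar h\in\Lcal^\Phi$, the function $f:=\bar h-1\in\Lcal^\Phi$ has $\int_\Xcal f\,d\mu\le 0$ for all $\mu\in\Pcal$, part~\ref{T_abs_rep_1} yields $f\in\overline\Ccal$, and $h=1+f$ $\Pcal$-q.s., as required.
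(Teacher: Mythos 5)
Your proposal is correct and follows essentially the same route as the paper: verify that $(\Lcal^\Phi,\Mcal^\Phi)$ is a separating dual pair, compute $\Ccal^\circ = \R_+\Pcal$ (your decoupling into conditions (a) and (b), and the further reduction of (b) to $\mu\ge 0$ and $\mu\ll\Pcal$, is just a slightly more granular version of the paper's one-line argument using $-\bm 1_A\in\Ccal$ and $\Phi\subset\Ccal$), invoke the bipolar theorem, and then derive part (ii) by truncating $h$ on $\{h=\infty\}$. No gap.
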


\begin{proof}
Let us first confirm that the bilinear form $\langle f, \mu\rangle = \int_\Xcal f d\mu$ separates points. Indeed, if $\mu \in \Mcal^\Phi$ is fixed and $\langle f, \mu\rangle = 0$ for all $f \in \Lcal^\Phi$, then by taking $f = \bm1_A$ for any measurable set $A \subset \Xcal$, we see that $\mu = 0$. If instead $f \in \Lcal^\Phi$ is fixed and $\langle f, \mu\rangle = 0$ for all $\mu \in \Mcal^\Phi$, we may take $\mu = \delta_x$ for any $x \in \Xcal$ to see that $f = 0$. With this out of the way we may proceed with the proof of the theorem.

\ref{T_abs_rep_1}: The bipolar theorem states that $\Ccal^{\circ\circ} = \overline\Ccal$; see Theorem~\ref{T_bipolar_theorem}. This is actually the desired conclusion because, as we show next,
\[
\Ccal^{\circ\circ} = \left\{f\in \Lcal^\Phi \colon \int_\Xcal f d\mu \le 0 \text{ for all } \mu \in \Pcal \right\}.
\]
This representation of $\Ccal^{\circ\circ}$ follows directly from the identity $\Ccal^\circ = \R_+ \Pcal$, which we now prove. (Here $\R_+\Pcal$ refers to the set of nonnegative multiples of elements of $\Pcal$.) For the forward inclusion, consider an element $\mu \in \Ccal^\circ$. Since $-\bm1_A \in \Ccal$ for every measurable set $A$ we have $\mu \in \Mcal_+$, and since $\Phi \subset \Ccal$ we have $\int_\Xcal f d\mu \le 0$ for all $f \in \Phi$. Thus $\mu$ is a nonnegative multiple of an element of $\Pcal$. Conversely, if $\mu$ is a nonnegative multiple of an element of $\Pcal$, then for any function $f \in \Ccal$, say $f = g - h$ with $g \in \cone(\Phi)$ and $h \in \Lcal^\Phi_p$, we have $\int_\Xcal f d\mu \le \int_\Xcal g d\mu \le 0$, and hence $\mu \in \Ccal^\circ$. Thus $\Ccal^\circ = \R_+ \Pcal$, and the proof is complete.

\ref{T_abs_rep_2}: Since every e-variable is $\Pcal$-q.s.\ equal to a finite e-variable, and since $\Lcal^\Phi$ contains all finite e-variables, the claim is immediate from \ref{T_abs_rep_1}.
\end{proof}

\begin{remark}
Although $\Phi$ is assumed to be nonempty, $\Pcal$ may be empty, and Theorem~\ref{T_abs_rep} still applies. In this case, $\Lcal^\Phi$ is the set $\Lcal$ of all measurable functions $f$. The condition $\int_\Xcal f d\mu \le 0$ for all $\mu \in \Pcal$ is vacuously satisfied by \emph{every} such $f$, so the theorem states that $\overline\Ccal$ equals all of $\Lcal$. This can also be seen directly: if $\Pcal = \emptyset$ then every measurable set $A \subset \Xcal$ is negligible, thus $\Lcal^\Phi_p = \Lcal^\Phi = \Lcal$, and $\Ccal = \cone(\Phi) - \Lcal^\Phi_p = \Lcal$ (recall that $\Phi$ is nonempty).
\end{remark}

\begin{remark} \label{R_ref_measure}
A hypothesis generated by constraints will not admit a reference measure in general. However, if a desired reference measure $\bar\mu$ is given, one can ensure that every $\mu \in \Pcal$ satisfies $\mu \ll \bar\mu$ simply by augmenting $\Phi$ with all functions $\bm1_A$ where $A$ is a $\bar\mu$-nullset. At the end of Section~\ref{S_fin_gen_hyp} we discuss this construction in the context of finitely generated hypotheses, see especially Theorem~\ref{T_finite_constraint_set_reference_measure}.
\end{remark}

For later use we record the following basic property of the quasi-surely positive cone.

\begin{lemma}\label{L_Lphip_closed}
$\Lcal^\Phi_p$ is weakly closed.
\end{lemma}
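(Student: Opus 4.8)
The plan is to exhibit $\Lcal^\Phi_p$ as an intersection of weakly closed half-spaces. For each $\mu \in \Mcal^\Phi$ the linear functional $f \mapsto \langle \mu, f\rangle$ is, by the very definition of the topology $\sigma(\Lcal^\Phi,\Mcal^\Phi)$, weakly continuous, so the set $H_\mu = \{f \in \Lcal^\Phi \colon \langle \mu, f\rangle \ge 0\}$ is weakly closed. Since an arbitrary intersection of weakly closed sets is weakly closed, it suffices to produce a family $\Ncal \subset \Mcal^\Phi$ with $\Lcal^\Phi_p = \bigcap_{\mu \in \Ncal} H_\mu$.

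The natural choice is to let $\Ncal$ be the set of all nonnegative $\mu \in \Mcal^\Phi$ that assign measure zero to every $\Pcal$-negligible set. The inclusion $\Lcal^\Phi_p \subseteq \bigcap_{\mu \in \Ncal} H_\mu$ is immediate: if $f$ is $\Pcal$-q.s.\ nonnegative, say $f \ge 0$ outside a $\Pcal$-negligible set $N$, and $\mu \in \Ncal$, then $\langle \mu, f\rangle = \int_{\Xcal \setminus N} f \,\d\mu \ge 0$ because $\mu$ is nonnegative and $\mu(N) = 0$; the integral is well defined since $f \in \Lcal^\Phi$ and $\mu \in \Mcal^\Phi$.

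For the reverse inclusion I would argue by contraposition. Suppose $f \in \Lcal^\Phi$ is not $\Pcal$-q.s.\ nonnegative. Then $\{f < 0\}$ is not $\Pcal$-negligible, so there is some $\nu \in \Pcal$ with $\nu(\{f < 0\}) > 0$; writing $A_n = \{f < -1/n\}$ and using continuity from below, $\nu(A_n) > 0$ for some $n \in \N$. Let $\mu = \bm1_{A_n} \cdot \nu$ be the restriction of $\nu$ to $A_n$. This is a finite nonnegative measure dominated by $\nu$; since $\nu \in \Pcal$, every $g \in \Lcal^\Phi$ satisfies $\int_\Xcal |g|\,\d\mu \le \int_\Xcal |g|\,\d\nu < \infty$, so $\mu \in \Mcal^\Phi$, and $\mu$ vanishes on every $\Pcal$-negligible set because $\nu$ does. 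Hence $\mu \in \Ncal$, yet $\langle \mu, f\rangle = \int_{A_n} f\,\d\nu \le -\tfrac1n \nu(A_n) < 0$, so $f \notin H_\mu$. This gives $\bigcap_{\mu \in \Ncal} H_\mu \subseteq \Lcal^\Phi_p$ and finishes the argument. (The degenerate case $\Pcal = \emptyset$ is covered automatically: then $\Ncal = \{0\}$, $H_0 = \Lcal^\Phi = \Lcal^\Phi_p$, and the contraposition step is vacuous.)

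I do not expect a serious obstacle here; the one point requiring care is the choice of the family $\Ncal$. Its members must lie in $\Mcal^\Phi$, so that the pairing $\langle \mu, \cdot\rangle$ is defined and weakly continuous on $\Lcal^\Phi$, and must not charge $\Pcal$-negligible sets, so that quasi-sure nonnegativity is actually detected. Verifying that the restricted measures $\bm1_{A_n}\cdot\nu$ enjoy both properties is the crux, and it follows at once from their domination by $\nu \in \Pcal$.
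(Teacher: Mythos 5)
Your proof is correct and follows essentially the same approach as the paper: writing $\Lcal^\Phi_p$ as an intersection of weakly closed half-spaces indexed by the nonnegative measures in $\Mcal^\Phi$ that vanish on $\Pcal$-negligible sets, and using a restricted $\nu \in \Pcal$ for the contrapositive. The only minor difference is that you pass to $A_n = \{f < -1/n\}$ to get a quantitative bound, whereas the paper restricts directly to $\{f < 0\}$ and notes the integral is strictly negative since $f$ is $\nu$-integrable; either works.
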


\begin{proof}
We claim that
\begin{equation} \label{eq_L_phi_p_closed}
\Lcal^\Phi_p = \left\{f \in \Lcal^\Phi \colon \int_\Xcal f d\mu \ge 0 \text{ for all } \mu \in \Mcal^\Phi_+ \text{ with } \mu \ll \Pcal\right\},
\end{equation}
where $\Mcal^\Phi_+$ is the set of nonnegative elements of $\Mcal^\Phi$, and $\mu \ll \Pcal$ means that $\mu(A) = 0$ for every $\Pcal$-negligible set $A$. The forward inclusion ``$\subset$'' is clear. For the reverse inclusion ``$\supset$'', consider some $f \notin \Lcal^\Phi_p$. Then the set $A_0 = \{f < 0\}$ is not $\Pcal$-negligible, and hence $\nu(A_0) > 0$ for some $\nu \in \Pcal$. Define $\mu = \nu(\fdot \cap A_0)$. Then $\mu$ belongs to $\Mcal^\Phi_+$ and $\mu \ll \Pcal$ (indeed, $\mu \ll \nu$). But $\int_\Xcal f d\mu < 0$, so $f$ does not belong to the right-hand side of \eqref{eq_L_phi_p_closed}. This establishes \eqref{eq_L_phi_p_closed} and shows that $\Lcal^\Phi_p$ is an intersection of sets of the form $\{f \in \Lcal^\Phi\colon \int_\Xcal f d\mu \ge 0\}$. Since $f \mapsto \int_\Xcal f d\mu$ is weakly continuous, all these sets are weakly closed, and thus so is $\Lcal^\Phi_p$.
\end{proof}

\section{Finitely generated hypotheses} \label{S_fin_gen_hyp}

Consider a finite nonempty constraint set,
\[
\Phi = \{g_1, \ldots, g_d\},
\]
and let $\Pcal$ be the hypothesis generated by $\Phi$. The constraint functions must be real-valued and measurable, but can otherwise be completely arbitrary.

\begin{theorem} \label{T_finitely_generated}
A function $f \in \Lcal^\Phi$ satisfies $\int_\Xcal f d\mu \le 0$ for all $\mu \in \Pcal$ if and only if
\[
f \le \sum_{i=1}^d \pi_i g_i, \quad \text{$\Pcal$-q.s.}
\]
for some $\pi = (\pi_1,\ldots,\pi_d) \in \R^d_+$. In particular, the set $\Ecal$ of all e-variables for $\Pcal$ consists precisely of those $[0,\infty]$-valued measurable functions which are $\Pcal$-q.s.~dominated by
\[
1 + \sum_{i=1}^d \pi_i g_i
\]
for some $\pi = (\pi_1,\ldots,\pi_d) \in \R^d_+$.
\end{theorem}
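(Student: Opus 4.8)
The plan is to obtain this as a corollary of Theorem~\ref{T_abs_rep} by showing that, when $\Phi = \{g_1,\dots,g_d\}$ is finite, the weak closure in that theorem is unnecessary, i.e.\ $\overline\Ccal = \Ccal$. First note that ``$f \le \sum_{i=1}^d \pi_i g_i$ $\Pcal$-q.s.\ for some $\pi \in \R^d_+$'' is, by unwinding definitions, exactly the assertion $f \in \Ccal = \cone(\Phi) - \Lcal^\Phi_p$, since $\sum_i \pi_i g_i - f \ge 0$ $\Pcal$-q.s.\ means $\sum_i \pi_i g_i - f \in \Lcal^\Phi_p$. The inclusion $\Ccal \subseteq \overline\Ccal$ is free, and the ``if'' direction restricted to such $f$ is immediate: if $f \le \sum_i \pi_i g_i$ $\Pcal$-q.s.\ then $\int_\Xcal f\,d\mu \le \sum_i \pi_i \int_\Xcal g_i\,d\mu \le 0$ for every $\mu \in \Pcal$ (using that $\mu$ integrates each $|g_i|$). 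Conversely, by Theorem~\ref{T_abs_rep}\ref{T_abs_rep_1} the set of $f \in \Lcal^\Phi$ with $\int_\Xcal f\,d\mu \le 0$ for all $\mu \in \Pcal$ equals $\overline\Ccal$, so the whole statement reduces to proving that $\Ccal$ is weakly closed.

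To show $\overline\Ccal \subseteq \Ccal$ I would pick $f \in \overline\Ccal$ and a net $(f_\alpha)$ in $\Ccal$ with $f_\alpha \to f$ weakly, writing $f_\alpha = \sum_i \pi_i^\alpha g_i - h_\alpha$ with $\pi^\alpha \in \R^d_+$ and $h_\alpha \in \Lcal^\Phi_p$. If the net $(|\pi^\alpha|)$ admits a bounded subnet, then, $\R^d$ being finite dimensional, I can pass to a further subnet along which $\pi^\alpha \to \pi \in \R^d_+$; then $\sum_i \pi_i^\alpha g_i \to \sum_i \pi_i g_i$ weakly, so $h_\alpha = \sum_i \pi_i^\alpha g_i - f_\alpha$ converges weakly to $\sum_i \pi_i g_i - f$, which lies in $\Lcal^\Phi_p$ because that cone is weakly closed (Lemma~\ref{L_Lphip_closed}); hence $f \le \sum_i \pi_i g_i$ $\Pcal$-q.s., i.e.\ $f \in \Ccal$.

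The main obstacle is the complementary case $|\pi^\alpha| \to \infty$. Here I would renormalize: along a subnet $\pi^\alpha/|\pi^\alpha| \to \rho$ on the unit sphere of $\R^d_+$, and since $f_\alpha/|\pi^\alpha| \to 0$ weakly, the net $h_\alpha/|\pi^\alpha| = \sum_i (\pi_i^\alpha/|\pi^\alpha|) g_i - f_\alpha/|\pi^\alpha|$ converges weakly to $v := \sum_i \rho_i g_i$; as $\Lcal^\Phi_p$ is a weakly closed cone, $v \in \Lcal^\Phi_p$, i.e.\ $v \ge 0$ $\Pcal$-q.s., with $\rho \ne 0$. The gain is structural: $v \in \cone(\Phi) \subseteq \Ccal$ and $-v \in -\Lcal^\Phi_p \subseteq \Ccal$, so $\Ccal$ contains the line $\R v$ and $\Ccal = \Ccal + \R v$; moreover one checks $\Ccal = K - \Lcal^\Phi_p$ for the larger closed convex cone $K := \cone(\Phi) + \R v$ inside $V := \mathrm{span}\{g_1,\dots,g_d\}$, whose lineality space contains $v$. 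Splitting $K$ into its lineality space $\Lcal_0$ and a pointed closed cone in a complement, and passing to the quotient by $\Lcal_0$, reduces the problem to a system of strictly smaller dimension, so I would finish by induction on $\dim V$, the base case $V=\{0\}$ giving $\Ccal = -\Lcal^\Phi_p$, weakly closed by Lemma~\ref{L_Lphip_closed}. The delicate point is checking that the ``$\Pcal$-q.s.\ positive cone'' stays weakly closed after the reduction, so that Lemma~\ref{L_Lphip_closed} and the net argument apply again; this genuinely uses the concrete nature of $\Lcal^\Phi_p$, since a purely abstract ``closed convex cone plus finitely generated cone is closed'' principle fails in general locally convex spaces.

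Once $\overline\Ccal = \Ccal$ is in hand, the description of the e-variables follows verbatim from the argument for Theorem~\ref{T_abs_rep}\ref{T_abs_rep_2}: every e-variable is $\Pcal$-q.s.\ equal to a finite one, finite e-variables lie in $\Lcal^\Phi$, and $1 + f$ with $f \in \Ccal$ is exactly a nonnegative function $\Pcal$-q.s.\ dominated by $1 + \sum_i \pi_i g_i$ for some $\pi \in \R^d_+$.
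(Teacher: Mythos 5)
Your overall strategy matches the paper's: reduce to showing $\Ccal$ is weakly closed, then invoke Theorem~\ref{T_abs_rep}. The translation of the displayed inequality into ``$f \in \Ccal$'' and the treatment of the bounded-coefficient case are identical to the paper. The divergence, and the gap, is in the unbounded case.

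When $|\pi^\alpha|\to\infty$ you extract $\rho\neq 0$ with $v=\sum_i\rho_i g_i\in\Lcal^\Phi_p$ and propose to quotient by a lineality direction and induct on $\dim V$. Two points. First, you only use $v\ge 0$ $\Pcal$-q.s., but in fact $v=0$ $\Pcal$-q.s.: since $v\in\cone(\Phi)$, $\int_\Xcal v\,d\mu\le 0$ for every $\mu\in\Pcal$, forcing equality. This observation (it is precisely what the paper calls redundancy of $\supp(\rho)$) is what the argument should pivot on. Second, and more substantially, the inductive step is not actually set up: $\Lcal^\Phi_p$ is not a subset of $V=\mathrm{span}\{g_1,\dots,g_d\}$, so ``passing to the quotient by $\Lcal_0$'' does not produce a lower-dimensional instance of the same statement — one would have to recast $\Ccal=K-\Lcal^\Phi_p$ modulo $\Pcal$-q.s.\ equality, replace the generators with representatives, check that the reduced constraint set still generates the same $\Pcal$, and re-verify the analogue of Lemma~\ref{L_Lphip_closed}. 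You flag this as ``the delicate point,'' but it is exactly where the proof lives; as written the induction is a plan, not a proof. The paper avoids the whole issue with Lemma~\ref{L_not_redundant}: every $f\in\Ccal$ admits a representation whose coefficient vector lies in the \emph{closed} set $K$ of vectors with non-redundant support. Choosing such representatives $\pi_\alpha\in K$ from the outset means that if $\|\pi_\alpha\|\to\infty$, the normalized limit $\rho$ still lies in $K$, yet $\sum_i\rho_i g_i=0$ $\Pcal$-q.s.\ shows $\supp(\rho)$ \emph{is} redundant — a direct contradiction, with no induction and no quotient construction needed. I would encourage you to either adopt the non-redundancy normalization or fully develop the quotient reduction; the sketch as given does not yet close the argument.
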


Before giving the proof, we introduce some terminology.  The \emph{support} of a vector $\rho \in \R^d_+$ is the set $\supp(\rho) = \{i \colon \rho_i > 0\}$. An index set $I \subset \{1,\ldots,d\}$ is called \emph{redundant} if there exists some nonzero $\rho \in \R^d_+$ with $\supp(\rho) \subset I$ such that $\sum_{i=1}^d \rho_i g_i = 0$, $\Pcal$-q.s. Note in particular that the empty set $I = \emptyset$ is \emph{not} redundant. Note also that the set of vectors whose supports are not redundant,
\begin{equation} \label{eq_nonredundant_K}
K = \left\{ \pi \in \R^d_+ \colon  \supp(\pi) \text{ is not redundant} \right\},
\end{equation}
is closed. Indeed, if $\pi_n \in K$ converges to some $\pi \in \R^d_+$, then $\supp(\pi) \subset \supp(\pi_n)$ for all sufficiently large $n$. Since the latter are not redundant, neither is the former, so $\pi \in K$. Finally, recall the set $\Ccal = \cone(\Phi) - \Lcal^\Phi_p$ introduced in \eqref{eq_Ccal_new}.

\begin{lemma} \label{L_not_redundant}
Every $g' \in \cone(\Phi)$ has a $\Pcal$-version $g = \sum_{i = 1}^d \pi_i g_i$ for some $\pi \in K$. Thus every $f \in \Ccal$ is of the form $f = g - h$ with $g = \sum_{i=1}^d \pi_i g_i$ for some $\pi \in K$ and $h \in \Lcal^\Phi_p$.
\end{lemma}

\begin{proof}
Consider any $g' = \sum_{i=1}^d \pi'_i g_i \in \cone(\Phi)$ with $\pi' \in \R^d_+$. To see that $g'$ has a $\Pcal$-version $g = \sum_{i = 1}^d \pi_i g_i$ for some $\pi \in K$, suppose $\pi'$ does not already belong to $K$, meaning that $\supp(\pi')$ is redundant. Let $\rho \in \R^d_+$ be as in the definition of redundant. Then there exists $\varepsilon > 0$ such that $\pi'' = \pi' - \varepsilon \rho$ belongs to $\R^d_+$ and satisfies $\supp(\pi'') \subsetneq \supp(\pi')$. Note that $\sum_i \pi''_i g_i = \sum_i \pi'_i g_i$, $\Pcal$-q.s. If $\supp(\pi'')$ is not redundant, we take $\pi = \pi''$. Otherwise we repeat the process, each time reducing the size of the support. Since the empty set is not redundant, we must eventually reach a representation in terms of a vector $\pi$ whose support is not redundant. This proves the first statement of the lemma.
For the second statement, consider any $f = g' - h' \in \Ccal$ with $g' \in \cone(\Phi)$ and $h' \in \Lcal^\Phi_p$. Let $g = \sum_{i = 1}^d \pi_i g_i$ be a $\Pcal$-version of $g'$ with $\pi \in K$ and note that $f = g - h$ where $h = h' + g - g'$ still belongs to $\Lcal^\Phi_p$.
\end{proof}

\begin{proof}[Proof of Theorem~\ref{T_finitely_generated}]
We will show that the set $\Ccal$ in \eqref{eq_Ccal_new} is already weakly closed; the result then follows from Theorem~\ref{T_abs_rep}. We will prove closedness by showing that the limit of any \emph{convergent net} in $\Ccal$ is again an element of $\Ccal$. Nets are generalizations of sequences, and are required for checking closedness in certain topological spaces. For the benefit of readers who do not work with nets regularly, we review the basic definitions and properties in Appendix~\ref{S_nets}. Readers who are not familiar with nets may replace `net' with `sequence' and `$\alpha$' with `$n$' everywhere below without losing any essential ideas. This modification of the proof would show that $\Ccal$ is \emph{sequentially closed}, but this does not imply closedness in general. For this reason the actual proof uses nets. We now turn to the details.

Consider a net $(f_\alpha)$ in $\Ccal$ that converges weakly to some $f \in \Lcal^\Phi$. We must show that $f \in \Ccal$. Thanks to Lemma~\ref{L_not_redundant}, for each $\alpha$ we have $f_\alpha = g_\alpha - h_\alpha$, where $g_\alpha = \sum_{i=1}^d \pi_{\alpha,i} g_i$ for some $\pi_\alpha \in K$ and $h_\alpha \in \Lcal^\Phi_p$. We claim that the real-valued net $\|\pi_\alpha\| = \pi_{\alpha,1} + \cdots + \pi_{\alpha,d}$ cannot converge to infinity. Assume for contradiction that it does, and write
\begin{equation} \label{eq_fin_gen_pf_1}
\sum_{i=1}^d \frac{\pi_{\alpha,i}}{1+\|\pi_\alpha\|} g_i - \frac{f_\alpha}{1+\|\pi_\alpha\|} = \frac{h_\alpha}{1+\|\pi_\alpha\|}.
\end{equation}
Since $\pi_\alpha / (1+ \|\pi_\alpha\|)$ is a bounded net in $\R^d_+$, we may pass to a subnet and assume that it converges to some limit $\rho \in \R^d_+$, which then satisfies $\|\rho\| = 1$. Each $\pi_\alpha / (1+ \|\pi_\alpha\|)$ belongs to the closed set $K$, so $\rho$ does too. Since the vector space operations are weakly continuous, the first term on the left-hand side of \eqref{eq_fin_gen_pf_1} converges weakly to $\sum_{i=1}^d \rho_i g_i$. Next, for any $\mu \in \Mcal^\Phi$ we have $\langle f_\alpha, \mu\rangle \to \langle f, \mu\rangle$ and hence
\[
\left\langle \frac{f_\alpha}{1+\|\pi_\alpha\|}, \mu \right\rangle = \frac{1}{1+\|\pi_\alpha\|} \langle f_\alpha, \mu \rangle \to 0.
\]
Thus the second term on the left-hand side of \eqref{eq_fin_gen_pf_1} converges weakly to zero. Overall, the left-hand side converges to $\sum_{i=1}^d \rho_i g_i$, and we conclude using Lemma~\ref{L_Lphip_closed} that this quantity is nonnegative, $\Pcal$-q.s.
On the other hand, we have
\[
\int_\Xcal \sum_{i = 1}^d \rho_i g_i d \mu = \sum_{i = 1}^d \rho_i \int_\Xcal g_i d \mu \le 0
\]
for every $\mu \in \Pcal$. Thus $\sum_{i=1}^d \rho_i g_i = 0$, $\Pcal$-q.s., which contradicts the fact that $\supp(\rho)$ is not redundant. We conclude that $\|\pi_\alpha\|$ cannot converge to infinity.

Since $\|\pi_\alpha\|$ does not converge to infinity, it admits a convergent subnet, which we again denote by $\pi_\alpha$. Denote the limit by $\pi \in \R^d_+$. It follows that $g_\alpha$ converges to $g = \sum_{i=1}^d \pi_i g_i$, and then that $h_\alpha = g_\alpha - f_\alpha$ converges to $h = g - f$. The latter belongs to $\Lcal^\Phi_p$ since this set is weakly closed thanks to Lemma~\ref{L_Lphip_closed}. We conclude that $f = g - h \in \Ccal$, showing that $\Ccal$ is weekly closed, as required.
\end{proof}

An immediate consequence of Theorem~\ref{T_finitely_generated} is the following generalization of a result of \citet[Theorem~1]{clerico2024optimalevaluetestingproperly}. Here an e-variable $h$ is called \emph{admissible} if whenever another e-variable $h'$ satisfies $h' \ge h$, $\Pcal$-q.s., we actually have $h' = h$, $\Pcal$-q.s. (See also Section~\ref{S_admissibility} for further discussion of admissibility.)

\begin{corollary}\label{C_evar_max_finite}
Every admissible e-variable is $\Pcal$-q.s.\ equal to
\[
1 + \sum_{i=1}^d \pi_i g_i
\]
for some $\pi$ in the set
\[
\Pi^\Phi = \left\{\pi \in \R^d_+ \colon 1 + \sum_{i=1}^d \pi_i g_i \ge 0 \text{ $\Pcal$-q.s.}\right\}.
\]
Conversely, every function of the above form is an admissible e-variable provided $\Phi$ satisfies the following constraint qualification:
\begin{equation}\label{C_max_evar_CQ}
\text{If $g, g' \in \cone(\Phi)$ and $g \le g'$, $\Pcal$-q.s., then $g = g'$, $\Pcal$-q.s.}
\end{equation}
\end{corollary}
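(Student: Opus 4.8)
The plan is to read both directions off Theorem~\ref{T_finitely_generated}, with the constraint qualification~\eqref{C_max_evar_CQ} supplying the only extra ingredient needed for the converse.

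For the forward direction I would take a maximal e-variable $h$. Theorem~\ref{T_finitely_generated} provides some $\pi \in \R^d_+$ with $h \le 1 + \sum_{i=1}^d \pi_i g_i$, $\Pcal$-q.s. Since $h \ge 0$ everywhere, this forces $1 + \sum_{i=1}^d \pi_i g_i \ge 0$, $\Pcal$-q.s., i.e.\ $\pi \in \Pi^\Phi$. The function $h' := \max\{1 + \sum_{i=1}^d \pi_i g_i,\, 0\}$ is then $[0,\infty]$-valued and $\Pcal$-q.s.\ equal to $1 + \sum_{i=1}^d \pi_i g_i$; moreover $\int_\Xcal h' d\mu = 1 + \sum_{i=1}^d \pi_i \int_\Xcal g_i d\mu \le 1$ for every $\mu \in \Pcal$ (the integrals being finite and nonpositive by the definition of $\Pcal$, and $h' = 1 + \sum_{i=1}^d \pi_i g_i$ holding $\mu$-a.e.), so $h'$ is an e-variable with $h' \ge h$, $\Pcal$-q.s. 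Maximality of $h$ then forces $h = h' = 1 + \sum_{i=1}^d \pi_i g_i$, $\Pcal$-q.s.

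For the converse, fix $\pi \in \Pi^\Phi$ and set $h = 1 + \sum_{i=1}^d \pi_i g_i$; as in the forward direction, $\max\{h,0\}$ is an e-variable that equals $h$, $\Pcal$-q.s. Let $h'$ be any e-variable with $h' \ge h$, $\Pcal$-q.s.; the goal is $h' = h$, $\Pcal$-q.s. Theorem~\ref{T_finitely_generated} applied to $h'$ yields $\pi' \in \R^d_+$ with $h' \le 1 + \sum_{i=1}^d \pi'_i g_i$, $\Pcal$-q.s. Combining the two bounds gives $\sum_{i=1}^d \pi_i g_i \le \sum_{i=1}^d \pi'_i g_i$, $\Pcal$-q.s., and since $d < \infty$ both sides lie in $\cone(\Phi)$; the constraint qualification~\eqref{C_max_evar_CQ} upgrades this to $\sum_{i=1}^d \pi_i g_i = \sum_{i=1}^d \pi'_i g_i$, $\Pcal$-q.s. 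Hence $h \le h' \le 1 + \sum_{i=1}^d \pi'_i g_i = h$, $\Pcal$-q.s., so $h' = h$, $\Pcal$-q.s., and $h$ is maximal.

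I do not expect a genuine obstacle here; the corollary is essentially a repackaging of Theorem~\ref{T_finitely_generated}. The two points that deserve a moment of care are (i) observing that the multiplier produced by Theorem~\ref{T_finitely_generated} automatically lies in $\Pi^\Phi$ whenever it dominates a nonnegative function --- this is exactly what makes the dominating function an e-variable, and it is where nonnegativity of e-variables enters --- and (ii) checking that $\sum_{i=1}^d \pi_i g_i$ and $\sum_{i=1}^d \pi'_i g_i$ really are elements of $\cone(\Phi)$ (which uses $d < \infty$), so that~\eqref{C_max_evar_CQ} applies. Everything else is routine bookkeeping with $\Pcal$-q.s.\ (in)equalities.
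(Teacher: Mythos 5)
Your proof is correct and follows essentially the same route as the paper's: both parts are read off Theorem~\ref{T_finitely_generated}, with the constraint qualification~\eqref{C_max_evar_CQ} supplying the converse. You are just more explicit than the paper about the forward direction, which it simply calls immediate; the details you spell out (why the multiplier lands in $\Pi^\Phi$ and why the dominating function is itself an e-variable) are exactly the bookkeeping the paper leaves to the reader.
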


\begin{proof}
The first part is immediate from Theorem~\ref{T_finitely_generated}. For the second part, fix an e-variable of the form $1+g$, where $g = \sum_{i=1}^d \pi_i g_i$ for some $\pi \in \Pi^\Phi$, and consider any e-variable $h$ that $\Pcal$-q.s.\ dominates $1+g$. We must show that $h = 1+g$, $\Pcal$-q.s. By Theorem~\ref{T_finitely_generated}, $h$ is $\Pcal$-q.s.\ dominated by an e-variable of the form $1 + g'$, where $g' = \sum_{i=1}^d \pi_i' g_i$ for some $\pi' \in \R^d_+$. We thus have $1+g \le h \le 1+g'$, $\Pcal$-q.s. The constraint qualification \eqref{C_max_evar_CQ} now yields $g = g'$, and hence $h = 1+g$, $\Pcal$-q.s. This shows that $h$ is admissible.
\end{proof}

\begin{remark}
In Corollary~\ref{C_evar_max_finite} we could further restrict $\pi$ to the set $K_0 = K \cap \Pi^\Phi$ with $K$ given in \eqref{eq_nonredundant_K}; see Lemma~\ref{L_not_redundant}. This is noteworthy because, as we show in Lemma~\ref{L_K0_compact_finite} below, the set $K_0$ is compact. This significantly simplifies the problem of finding \emph{optimal} e-variables, and is discussed in detail in Section~\ref{S:optimal}. It is worth mentioning that, is practice, $\Pi^\Phi$ itself is often already compact.
\end{remark}

As another corollary, we are able to describe all admissible nonasymptotically valid tests for $\Pcal$. Here a \emph{level-$\alpha$ test} for $\Pcal$, where $\alpha \in (0,1)$, is a $[0,1]$-valued measurable function $\varphi$ such that $\int_\Xcal \varphi d \mu \le \alpha$ for all $\mu \in \Pcal$. The test $\varphi$ is \emph{admissible} if whenever another such test $\varphi'$ satisfies $\varphi' \ge \varphi$, $\Pcal$-q.s., we actually have $\varphi' = \varphi$, $\Pcal$-q.s.

\begin{corollary} \label{C_admissible_test_finitely_constrained}
Fix $\alpha \in (0,1)$. Every admissible level-$\alpha$ test $\varphi$ for $\Pcal$ admits the representation $\varphi = 1 \wedge (\alpha + \sum_{i=1}^d \pi_i g_i)$, $\Pcal$-q.s., for some $\pi \in \Pi^\Phi$.
\end{corollary}

\begin{proof}
Note that $\varphi / \alpha$ is an e-variable for $\Pcal$, and is thus $\Pcal$-q.s.\ dominated by some e-variable of the form $h = 1 + \sum_{i=1}^d \pi'_i g_i$ with $\pi' \in \Pi^\Phi$. Define $\varphi' = 1 \wedge (\alpha + \sum_{i=1}^d \alpha \pi'_i g_i)$. Then $\varphi' \ge \varphi$, $\Pcal$-q.s., and, thanks to the e-variable property, $\varphi'$ is a level-$\alpha$ test for $\Pcal$. Since $\varphi$ was admissible, $\varphi' = \varphi$, $\Pcal$-q.s. Setting $\pi = \alpha \pi'$ we get the claimed representation.
\end{proof}

The representation in Corollary~\ref{C_evar_max_finite} is useful because the set $\Pi^\Phi$ can be described explicitly in various cases of interest. The following simple, yet interesting, example illustrates this; see \cite{agrawal2020optimal,agrawal2021regret,clerico2024optimalevaluetestingproperly,wang2023catoni,fan2025testing} for more details.

\begin{example} \label{ex:250302}
We take $\Xcal = \R$ and let $\Pcal$ consist of all zero mean distributions with standard deviation bounded by a positive number $\sigma$. This hypothesis is generated by the constraint set $\Phi = \{x, -x, x^2 - \sigma^2\}$. Thus $\Pi^\Phi$ consists of all $(\pi_1,\pi_2,\pi_3) \in \R^3_+$ such that $1+(\pi_1-\pi_2)x + \pi_3(x^2-\sigma^2) \ge 0$ for all $x \in \R$. (The only $\Pcal$-negligible set is the empty set, which is why the inequality must hold for \emph{all} $x$.) It is natural to re-parameterize in terms of $\alpha = \pi_1-\pi_2 \in \R$ and $\beta = \pi_3\sigma^2 \in \R_+$, constrained to satisfy $1 + \alpha x + \beta(x^2/\sigma^2 - 1) \ge 0$ for all $x \in \R$. Minimizing over $x$ and requiring that the minimum value be nonnegative, one arrives at the constraint $\sigma^2\alpha^2 + (2\beta - 1)^2 \le 1$ on $\alpha,\beta$. We conclude that every admissible e-variable is of the form
\begin{equation}\label{ex_mean_var_max_evars}
1 + \alpha x + \beta \left(\frac{x^2}{\sigma^2} - 1\right), \quad x \in \R,
\end{equation}
for some $(\alpha,\beta)$ inside the ellipse determined by $\sigma^2\alpha^2 + (2\beta - 1)^2 \le 1$. Note we do not have to impose $\beta \ge 0$ separately, since this is already implied by the ellipse constraint. Finally, $\Phi$ satisfies the constraint qualification \eqref{C_max_evar_CQ}. Indeed, if $\alpha x + \beta (x^2/\sigma^2-1) \ge \alpha' x + \beta'(x^2/\sigma^2-1)$ for all $x \in \R$, we first take $x = \pm\sigma$ to get $\alpha = \alpha'$, and then (say) $x = 0$ and $x = 2\sigma$ to get $\beta = \beta'$. Consequently, every function of the form \eqref{ex_mean_var_max_evars} is an admissible e-variable.
\end{example}

The fact that the constraint functions are not required to satisfy any kind of continuity or other regularity conditions beyond measurability is sometimes useful, for instance in the context of quantiles.

\begin{example} 
We continue to take $\Xcal = \R$. Fix $\alpha \in (0,1)$ and $q \in \R$, and let $\Pcal$ consist of all distributions $\mu$ whose $\alpha$-quantile is at most $q$, meaning that $\mu((-\infty, q]) \ge \alpha$. This hypothesis is generated by the single constraint function $\alpha - \bm1_{(-\infty,q]}(x)$. Since this function takes both positive and negative values, the constraint qualification \eqref{C_max_evar_CQ} holds. Thus the admissible e-variables are the functions $1 + \pi_1(\alpha - \bm1_{(-\infty,q]}(x))$ with $\pi_1 \in [0,(1-\alpha)^{-1}]$ to ensure nonnegativity.
\end{example}

The following example is common in the recent literature involving the mean of a bounded random variable.

\begin{example}\label{eg:bounded-mean}
Take $\Xcal = [0,1]$ and let $\Pcal$ consist of all distributions whose mean is at most a given constant $m \in (0,1)$. This hypothesis is generated by the constraint function $x - m$, and the constraint qualification \eqref{C_max_evar_CQ} holds. Thus, the admissible e-variables are the functions $1+ \pi_1 (x-m)$ with $\pi_1 \in [0,1/m]$ to ensure nonnegativity. If we further add the constraint function $m-x$, then the resulting (smaller) class $\Pcal$ consists of those distributions with mean equal to $m$. In this case, the admissible e-variables have the same form, but with $\pi_1 \in [-1/(1-m),1/m]$.
\end{example}

In particular, this recovers the class of e-variables used in~\cite{waudby2024estimating,larsson2024numeraire,orabona2023tight,clerico2024optimality}. A minor variant of Example~\ref{ex:250302} shows that without the boundedness assumption, there do not exist any nontrivial e-variables.

\begin{example} \label{ex_unbounded_mean}
Take $\Xcal = \R$ and let $\Pcal$ consist of all distributions whose mean exists and equals zero. This hypothesis is generated by the constraint functions $x$ and $-x$, and the constraint qualification \eqref{C_max_evar_CQ} holds. Thus, the admissible e-variables are the functions $1 + (\pi_1 - \pi_2)x = 1+ \alpha x$, where we reparameterize in terms of $\alpha = \pi_1-\pi_2 \in \R$ as in Example~\ref{ex:250302}. We must choose $\alpha$ so that $\alpha x$ is nonnegative for any $x \in \R$. This immediately implies $\alpha=0$, showing that the e-variable equal to one is the only admissible e-variable in this class (and all other e-variables must be less than or equal to one). The same argument shows that the hypothesis of a well-defined and nonpositive mean, generated by the single constraint function $x$, also only admits trivial e-variables.
\end{example}

We end this section with a brief discussion of reference measures, building on Remark~\ref{R_ref_measure}. In addition to the constraint set $\Phi = \{g_1,\ldots,g_d\}$, suppose a desired reference measure $\bar\mu$ is given. We are interested in the hypothesis
\[
\Pcal_{\bar\mu} = \{\mu \in \Pcal \colon \mu \ll \bar\mu\}.
\]
Note that $\Pcal_{\bar\mu}$ is generated by the augmented constraint set $\Phi_{\bar\mu} = \Phi \cup \{\bm1_A \colon \bar\mu(A) = 0\}$. 
Moreover, functions which are zero $\bar\mu$-a.e.\ are actually $\Pcal_{\bar\mu}$-negligible, including all conic combinations of the constraint functions $\bm1_A$ with $\bar\mu(A) = 0$. Therefore the set $\Ccal$ in \eqref{eq_Ccal_new}, with $\Phi_{\bar\mu}$ in place of $\Phi$, consists of all functions of the form $f = \sum_{i=1}^d \pi_i g_i - h$ with $\pi_1,\ldots,\pi_d \in \R_+$ and $h \ge 0$, $\Pcal_{\bar\mu}$-q.s. With this observation in hand we obtain the following result, whose proof is word for word the same as that of Theorem~\ref{T_finitely_generated} (including the proof of Lemma~\ref{L_not_redundant}) and Corollary~\ref{C_evar_max_finite}, replacing only $\Phi$ with $\Phi_{\bar\mu}$ and $\Pcal$ with $\Pcal_{\bar\mu}$.

\begin{theorem} \label{T_finite_constraint_set_reference_measure}
The statements of Theorem~\ref{T_finitely_generated} and Corollary~\ref{C_evar_max_finite} remain true with $\Phi_{\bar\mu}$ and $\Pcal_{\bar\mu}$ in place of $\Phi$ and $\Pcal$.
\end{theorem}

\section{One-sided sub-$\psi$ distributions} \label{S_sub_psi}

Fix a closed convex function $\psi \colon \R \to \R \cup \{\infty\}$ whose effective domain $\dom(\psi)$ is either $[0,\lambda_\text{max})$ for some $\lambda_\text{max} \in (0,\infty]$, or $[0,\lambda_\text{max}]$ for some $\lambda_\text{max} \in (0,\infty)$.\footnote{The effective domain is the set $\dom(\psi) = \{\lambda \in \R \colon \psi(\lambda) < \infty\}$ where $\psi$ is finite. That $\psi$ is closed means that its epigraph $\{(\lambda,y) \in \R \times \R \colon \psi(\lambda) \le y\}$ is closed. For our $\psi$, this just says that $\psi$ is continuous at $\lambda_\text{max}$ (if this is finite) and at $0$.} We assume that $\psi$ is nonnegative and that $\psi(0) = 0$. Key examples include cumulant generating functions of zero-mean distributions, modified to take the value infinity on the negative half-line. More generally, $\psi$ could be a \emph{CGF-like function} in the terminology of~\cite{howard2020time,howard2021time}.
Our goal is to describe the e-variables for the hypothesis consisting of all distributions of the following kind, whose usage stems back to the work of~\cite{cramer1994nouveau} (originally 1938) and \cite{chernoff1952measure}.

\begin{definition} \label{D_sub_psi}
A probability measure $\mu \in \Mcal_1(\R)$ is called \emph{(one-sided) sub-$\psi$} if its cumulant generating function is bounded above by $\psi$,
that is,
\[
\int_\R e^{\lambda x} \mu(dx) \le e^{\psi(\lambda)} \text{ for all } \lambda \in \dom(\psi). 
\]
\end{definition}

For example, when $\psi(\lambda)=\sigma^2 \lambda^2/2$ and $\lambda_{\max}=\infty$, the measure is called $\sigma$-sub-Gaussian in the sense that its (right) tail is lighter than that of a centered Gaussian with variance $\sigma^2$.
The sub-$\psi$ property in Definition~\ref{D_sub_psi} is ``one-sided'' in the sense that no condition is imposed for negative values of $\lambda$. Several of the proofs below make use of this property. To simplify terminology we will usually omit the qualifier ``one-sided''.

The convex conjugate of $\psi$ is the convex function $\psi^*$ given by
\[
\psi^*(x) = \sup_{\lambda \in \R} \{ \lambda x - \psi(\lambda) \}, \quad x \in \R.
\]
Since $\psi(0) = 0$, $\psi^*$ takes values in $[0,\infty]$. Moreover, because $\psi$ is nonnegative and $\psi(\lambda) = \infty$ for $\lambda < 0$, we have $\psi^*(x) = 0$ for $x \le 0$. Thus the effective domain $\dom(\psi^*)$ contains the negative half-line.

\begin{lemma} \label{L_sub_psi_support}
Every sub-$\psi$ distribution is concentrated on $\dom(\psi^*)$.
\end{lemma}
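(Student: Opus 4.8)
The plan is to show that the complement of $\dom(\psi^*)$ is $\Pcal$-negligible, where $\Pcal$ here denotes the hypothesis consisting of all sub-$\psi$ distributions. Equivalently, fixing a sub-$\psi$ measure $\mu$, I must show that $\mu(\R \setminus \dom(\psi^*)) = 0$. Since $\dom(\psi^*)$ already contains the negative half-line (as noted just before the lemma, because $\psi^*(x) = 0$ for $x \le 0$), the only issue is the right end. Let $x^* = \sup \dom(\psi^*) \in (0,\infty]$; if $x^* = \infty$ there is nothing to prove, so assume $x^* < \infty$. Then $\psi^*(x) = \infty$ for every $x > x^*$, and I want to conclude $\mu((x^*, \infty)) = 0$, and also, if $\psi^*(x^*) = \infty$, that $\mu(\{x^*\}) = 0$.

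The key mechanism is the Chernoff/Markov bound applied with $\lambda$ in the effective domain of $\psi$. For any $\lambda \in \dom(\psi)$ and any $t \in \R$, Markov's inequality gives
\[
\mu([t,\infty)) \;\le\; e^{-\lambda t}\int_\R e^{\lambda x}\,\mu(dx) \;\le\; e^{\psi(\lambda) - \lambda t}.
\]
Optimizing the exponent over $\lambda \in \dom(\psi)$ — recalling that $\psi(\lambda) = \infty$ off $\dom(\psi)$, so the supremum defining $\psi^*$ is effectively over $\dom(\psi)$ — yields $\mu([t,\infty)) \le e^{-\psi^*(t)}$ for every $t \in \R$. Now if $t > x^*$ then $\psi^*(t) = \infty$, so $\mu([t,\infty)) = 0$; letting $t \downarrow x^*$ along a sequence gives $\mu((x^*,\infty)) = 0$. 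If moreover $\psi^*(x^*) = \infty$, then taking $t = x^*$ directly gives $\mu([x^*,\infty)) = 0$, so in all cases $\mu$ is concentrated on $\dom(\psi^*)$.

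The only genuinely delicate point is the interchange of the supremum over $\lambda$ with the bound, i.e.\ justifying $\mu([t,\infty)) \le \inf_{\lambda \in \dom(\psi)} e^{\psi(\lambda) - \lambda t} = e^{-\sup_{\lambda \in \dom(\psi)}(\lambda t - \psi(\lambda))} = e^{-\psi^*(t)}$; this is immediate since the bound $\mu([t,\infty)) \le e^{\psi(\lambda)-\lambda t}$ holds simultaneously for all $\lambda \in \dom(\psi)$, so one may take the infimum on the right. One should also note that when $\dom(\psi) = [0,\lambda_{\max}]$ is compact, the closedness (continuity) of $\psi$ at $\lambda_{\max}$ ensures $\psi^*$ is the genuine conjugate and the value $\psi(\lambda_{\max})$ used in the bound is finite; when $\dom(\psi) = [0,\lambda_{\max})$ one simply uses all $\lambda < \lambda_{\max}$ and passes to the sup. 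No step here presents a real obstacle — the argument is a clean Chernoff bound plus a monotone limit — so I expect the write-up to be short.
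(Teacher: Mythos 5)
Your proof is correct and follows essentially the same route as the paper's: invoke the Chernoff tail bound $\mu([t,\infty)) \le e^{-\psi^*(t)}$ and then split on whether the right endpoint of $\dom(\psi^*)$ belongs to $\dom(\psi^*)$, using a monotone limit $t \downarrow x^*$ for the open case. The only cosmetic difference is that you derive the Chernoff bound explicitly from Markov's inequality, whereas the paper simply cites it as well-known.
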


\begin{proof}
This follows from the well-known fact that any sub-$\psi$ distributed random variable $X$ satisfies the Chernoff tail bound $\P(X \ge x) \le e^{-\psi^*(x)}$ for all $x$. Indeed, if $\dom(\psi^*) = \R$ there is nothing to prove. Otherwise, let $\bar x < \infty$ denote the right endpoint of the interval $\dom(\psi^*)$. If $\bar x \notin \dom(\psi^*)$, then $\psi^*(\bar x) = \infty$ and hence $\P(X \ge \bar x) \le e^{-\psi^*(\bar x)} = 0$, so that $X$ is concentrated on $\dom(\psi^*)$. If $\bar x \in \dom(\psi^*)$, then $\P(X > \bar x) = \lim_{x \downarrow \bar x} \P(X \ge x) = 0$, showing that $X$ is concentrated on $\dom(\psi^*)$ in this case too. 
\end{proof}

Thanks to Lemma~\ref{L_sub_psi_support}, any sub-$\psi$ distribution can be regarded as a probability measure on $\dom(\psi^*)$. We thus take $\Xcal = \dom(\psi^*)$ with its Borel $\sigma$-algebra. Consider the infinitely many (even uncountably many) constraint functions
\[
g_\lambda(x) = e^{\lambda x - \psi(\lambda)} - 1, \quad x \in \dom(\psi^*),
\]
indexed by $\lambda \in \dom(\psi)$. The hypothesis generated by these functions is
\begin{equation}\label{eq_sub_psi_hypothesis_new}
\Pcal = \{ \mu \in \Mcal_1 \colon \mu \text{ is sub-$\psi$} \}.
\end{equation}
If $\lambda_\text{max}$ is not already in $\dom(\psi)$, we include the additional constraint function
\[
g_{\lambda_\text{max}}(x) = \lim_{\lambda \uparrow \lambda_\text{max}} g_\lambda(x), \quad x \in \dom(\psi^*).
\]
The limit exists and is finite because $\lambda \mapsto \lambda x - \psi(\lambda)$ is concave and $-1 \le g_\lambda(x) \le e^{\psi^*(x)} - 1$. Fatou's lemma implies that $\int_\Xcal g_{\lambda_\text{max}} d\mu \le 0$ for any sub-$\psi$ distribution $\mu$, so $g_{\lambda_\text{max}}$ is redundant in the sense that including it does not alter the generated hypothesis $\Pcal$ in \eqref{eq_sub_psi_hypothesis_new}. It does, however, play a role in the representation theorem below. Note that the constraint functions $g_\lambda$ are now indexed by the compact set
\[
\Lambda = [0,\lambda_\text{max}]
\]
(where we stress that $\lambda_\text{max}$ may be infinity), and the maps $\lambda \mapsto g_\lambda(x)$ are continuous on $\Lambda$ for every $x \in \dom(\psi^*)$. Our full constraint set is
\[
\Phi = \{g_\lambda\colon \lambda \in \Lambda\}.
\]

\begin{theorem} \label{T_sub_psi_rep}
\begin{enumerate}
\item \label{T_sub_psi_rep_1} A function $f \in \Lcal^\Phi$ satisfies $\int_\Xcal f d\mu \le 0$ for all $\mu \in \Pcal$ if and only if
\[
f(x) \le \int_\Lambda g_\lambda(x) \pi(d\lambda), \quad x \in \dom(\psi^*),
\]
for some $\pi \in \Mcal_+(\Lambda)$. 
\item \label{T_sub_psi_rep_2} The set $\Ecal$ of all e-variables for $\Pcal$ consists precisely of those $[0,\infty]$-valued measurable functions that are pointwise dominated on $\dom(\psi^*)$ by
\[
\int_\Lambda e^{\lambda x - \psi(\lambda)} \pi(d\lambda)
\]
for some $\pi \in \Mcal_1(\Lambda)$. (Note that $\pi$ is a probability measure here. Moreover, if $\lambda_\textnormal{max} \notin \dom(\psi)$, the integrand for this value of $\lambda$ is understood as $\lim_{\lambda \uparrow \lambda_\textnormal{max}} e^{\lambda x - \psi(\lambda)}$.)
\end{enumerate}
\end{theorem}

Just as in the finitely generated case (see Corollary~\ref{C_admissible_test_finitely_constrained}), we obtain a description of all admissible tests for $\Pcal$. We do not repeat the proof.

\begin{corollary} \label{C_admissible_test_sub_psi}
Fix $\alpha \in (0,1)$. Every admissible level-$\alpha$ test $\varphi$ for $\Pcal$ admits the representation $\varphi(x) = 1 \wedge  \int_\Lambda \alpha e^{\lambda x - \psi(\lambda)} \pi(d\lambda)$ for $x \in \dom(\psi^*)$ for some $\pi \in \Mcal_1(\Lambda)$.
\end{corollary}

We now give a brief roadmap of the proof of Theorem~\ref{T_sub_psi_rep}, introducing some notation along the way. The most involved part of the proof is to show part~\ref{T_sub_psi_rep_1}. Once this has been done, the proof of part~\ref{T_sub_psi_rep_2} is straightforward.

To deduce Theorem~\ref{T_sub_psi_rep}\ref{T_sub_psi_rep_1} from Theorem~\ref{T_abs_rep}\ref{T_abs_rep_1}, it suffices to show that the weak closure $\overline\Ccal$ of the set $\Ccal = \cone(\Phi) - \Lcal^\Phi_p$ in \eqref{eq_Ccal_new} is equal to $\Gcal - \Lcal^\Phi_+$, where we define the convex cone
\begin{align} \label{eq:240309}
\Gcal = \left\{ \int_\Lambda g_\lambda \pi(d\lambda) \colon \pi \in \Mcal_+(\Lambda) \right\}.
\end{align}
By considering finitely supported measures $\pi$, one sees that $\Gcal$ contains $\cone(\Phi)$. Moreover, it is shown in Lemma~\ref{L_sub_psi_empty_set} below that the only $\Pcal$-negligible subset of $\dom(\psi^*)$ is the empty set, and hence $\Lcal^\Phi_p = \Lcal^\Phi_+$. Thus $\Ccal \subset \Gcal - \Lcal^\Phi_+$. On the other hand, for any $g = \int_\Lambda g_\lambda \pi(d\lambda) \in \Gcal$ and $\mu \in \Pcal$ we have from Tonelli's theorem that
\[
\int_\Xcal g(x) \mu(dx) = \int_\Lambda \int_\Xcal g_\lambda(x) \mu(dx) \pi(d\lambda) \le 0.
\]
This shows, first, that $\Gcal$ is indeed a subset of $\Lcal^\Phi$. It also shows, via the forward implication of Theorem~\ref{T_abs_rep}\ref{T_abs_rep_1}, that $\Gcal - \Lcal^\Phi_+ \subset \overline\Ccal$. In summary, we have
\[
\Ccal \subset \Gcal - \Lcal^\Phi_+ \subset \overline\Ccal.
\]
Therefore, to show that $\Gcal - \Lcal^\Phi_+ = \overline\Ccal$ it is enough to show that
\begin{equation} \label{eq_sub_psi_small_closure}
\text{$\Gcal - \Lcal^\Phi_+$ is $\sigma(\Lcal^\Phi, \Mcal^\Phi)$-closed.}
\end{equation}
This is the heart of the matter, and the proof relies on a closedness criterion for convex subsets of Banach space duals known as the Krein--\v{S}mulian theorem (see Appendix~\ref{S_krein_smulian}).

Unfortunately the Krein--\v{S}mulian theorem cannot be applied directly, because $\Lcal^\Phi$ is not the dual of a Banach space. Instead, we first endow $\Lcal^\Phi$ with a slightly weaker topology than $\sigma(\Lcal^\Phi, \Mcal^\Phi)$, which allows us to embed it into a larger space that \emph{is} the dual of a Banach space. Checking closedness in the weaker topology can now be done using the Krein--\v{S}mulian theorem. As we show below, this amounts to checking that for each $r \in \R_+$, the subset $\Gcal_r = \{g \in \Gcal \colon g \ge -r\} \subset \Gcal$ of elements uniformly bounded below by $r$ is compact. This turns out to be fairly straightforward, because $\Gcal_r$ is a continuous image of the compact set $\{\pi \in \Mcal_+(\Lambda) \colon \pi(\Lambda) \le r\}$ equipped with the usual weak topology coming from duality with the continuous functions on $\Lambda$.

The details of this argument depend on several preliminary results.

\begin{lemma} \label{L_sub_psi_two_points_measure}
Let $x_0 \in \dom(\psi^*)$. There exists $y_0 \ge 0$ such that for any $p \in [0,\frac12 e^{-\psi^*(x_0)}]$ and $y \ge y_0$, the probability measure $\nu = p \delta_{x_0} + (1-p) \delta_{-x_0-y}$ is sub-$\psi$.
\end{lemma}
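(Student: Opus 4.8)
The plan is to verify the defining inequality of Definition~\ref{D_sub_psi} for $\nu$ directly. Set $f(\lambda) = \int_\R e^{\lambda x}\,\nu(dx) = p\,e^{\lambda x_0} + (1-p)\,e^{-\lambda(x_0+y)}$; we must produce $y_0 \ge 0$ (depending only on $x_0$) so that $f(\lambda) \le e^{\psi(\lambda)}$ for every $\lambda \in \dom(\psi)$ whenever $y \ge y_0$ and $p \in [0,\tfrac12 e^{-\psi^*(x_0)}]$. Two elementary observations carry the argument. First, Fenchel's inequality $\lambda x_0 \le \psi(\lambda) + \psi^*(x_0)$ together with $p \le \tfrac12 e^{-\psi^*(x_0)}$ gives $p\,e^{\lambda x_0} \le \tfrac12 e^{\psi(\lambda)}$ for all $\lambda \in \dom(\psi)$; in particular $p \le \tfrac12$. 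Second, since $\psi \ge 0$ we have $e^{\psi(\lambda)} \ge 1$, so it always suffices to bound $f(\lambda)$ either by $1$ or by $\tfrac12 e^{\psi(\lambda)} + \tfrac12$.

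The case $x_0 \le 0$ is immediate: taking $y_0 = |x_0|$ makes $x_0+y \ge 0$, so $e^{\lambda x_0} \le 1$ and $e^{-\lambda(x_0+y)} \le 1$ for $\lambda \ge 0$, hence $f(\lambda) \le p + (1-p) = 1 \le e^{\psi(\lambda)}$. For $x_0 > 0$ I would split the range of $\lambda$ at the $y$-dependent threshold $\lambda_0 = (\log 2)/(x_0+y)$. On $[0,\lambda_0]$ the point is that $f$ is non-increasing, so $f(\lambda) \le f(0) = 1 \le e^{\psi(\lambda)}$: differentiating, $f'(\lambda) \le 0$ is equivalent to $\lambda(2x_0+y) \le \log\frac{(1-p)(x_0+y)}{p x_0}$ (the trivial case $p = 0$ being handled separately), and since $\tfrac{1-p}{p} \ge 1$ it is enough to have $\lambda_0(2x_0+y) \le \log\frac{x_0+y}{x_0}$. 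For $\lambda \in \dom(\psi)$ with $\lambda \ge \lambda_0$, monotonicity of $\lambda \mapsto e^{-\lambda(x_0+y)}$ gives $(1-p)e^{-\lambda(x_0+y)} \le e^{-\lambda_0(x_0+y)} = \tfrac12$, so $f(\lambda) \le \tfrac12 e^{\psi(\lambda)} + \tfrac12 \le e^{\psi(\lambda)}$.

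It remains to check $\lambda_0(2x_0+y) \le \log\frac{x_0+y}{x_0}$ for $y$ large. The left side equals $(\log 2)\cdot\frac{2x_0+y}{x_0+y} \le 2\log 2 = \log 4$, while the right side is $\log(1+y/x_0)$, which is $\ge \log 4$ once $y \ge 3x_0$; so $y_0 = 3x_0$ works here (and $y_0 = 3|x_0|$ works in both cases). The one delicate point — more a matter of getting the bookkeeping right than a genuine obstacle — is that $\lambda_0$ must be allowed to shrink with $y$: the atom of mass $p$ at $x_0$ makes $f$ slightly exceed $1$ for $\lambda$ bounded away from the origin, and the compensating downward pull of the far atom at $-x_0-y$ keeps $f \le 1$ only on an interval of length $\Theta((\log y)/y)$, so one must confirm that this short interval still reaches out to where $e^{-\lambda(x_0+y)} \le \tfrac12$ — which is exactly the inequality just checked.
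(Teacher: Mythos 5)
Your proof is correct, and it takes a genuinely different route from the paper's. The paper fixes a $y$-independent threshold $\lambda_0 > 0$ by observing that $\lambda \mapsto f(\lambda,\tfrac12,1)$ is strictly decreasing near $\lambda = 0$ (this uses $\psi$ nondecreasing on $[0,\infty)$ and a derivative computation), exploits monotonicity of $f$ in $p$ and $y$ to extend the bound $f \le 1$ on $[0,\lambda_0]$ to all admissible $(p,y)$, and then for $\lambda > \lambda_0$ bounds $f(\lambda,p,y) \le \tfrac12 + e^{-\lambda_0(x_0+y) - \psi(\lambda_0)}$ and picks $y_0$ to make this $\le 1$; the resulting $y_0$ is implicit. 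You instead (i) use the Fenchel--Young inequality $\lambda x_0 \le \psi(\lambda) + \psi^*(x_0)$ to get the clean uniform bound $p\,e^{\lambda x_0} \le \tfrac12 e^{\psi(\lambda)}$ on all of $\dom(\psi)$, (ii) choose the $y$-dependent threshold $\lambda_0 = (\log 2)/(x_0+y)$ precisely so that $e^{-\lambda_0(x_0+y)} = \tfrac12$, making the region $\lambda \ge \lambda_0$ immediate, and (iii) reduce the region $[0,\lambda_0]$ to the elementary inequality $(\log 2)\tfrac{2x_0+y}{x_0+y} \le \log(1 + y/x_0)$, which holds once $y \ge 3x_0$. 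Your approach is more self-contained (no derivative-at-$0$ argument, no monotonicity in $p$ and $y$) and yields the explicit $y_0 = 3|x_0|$. One small remark: in your closing paragraph you frame it as though $\lambda_0$ ``must'' shrink with $y$; the paper shows a fixed $\lambda_0$ also works, so this is a feature of your particular decomposition rather than a necessity — but it causes no problem, and your verification that the shrinking interval still reaches past where $e^{-\lambda(x_0+y)} \le \tfrac12$ is exactly what is needed.
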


\begin{proof}
Define
\[
f(\lambda,p,y) = \int_\Xcal e^{\lambda x - \psi(\lambda)} \nu(dx) = p e^{\lambda x_0 - \psi(\lambda)} + (1-p)e^{-\lambda(x_0 + y) - \psi(\lambda)}.
\]
If $x_0 \le 0$, the right-hand side is bounded by one for any $p \in [0,1]$,  $y \ge -x_0$, and $\lambda \in \R$, recalling that $\psi$ is nonnegative and that $\psi(\lambda) = \infty$ for $\lambda < 0$. Thus $\nu$ is sub-$\psi$, and we may  take $y_0 = -x_0$.

Consider now the case $x_0 > 0$. We have $f(\lambda, \frac12, 1) = \frac12 (e^{\lambda x_0} + e^{-\lambda(x_0 + 1)}) e^{-\psi(\lambda)}$, which is strictly decreasing with respect to $\lambda$ in a right neighborhood of zero. This is because $\psi$ is nondecreasing on $[0,\infty)$, being a nonnegative convex function, and $\frac{d}{d\lambda}|_{\lambda=0}(e^{\lambda x_0} + e^{-\lambda(x_0 + 1)}) = -1$. Moreover, $f(0,\frac12,1) = 1$,
so there exists some $\lambda_0 > 0$ such that $f(\lambda,\frac12,1) \le 1$ for all $\lambda \in [0,\lambda_0]$. For $p \in [0,\frac12]$ and $y \ge 1$, $f(\lambda,p,y)$ is nondecreasing in $p$ and nonincreasing in $y$, so it follows that $f(\lambda,p,y) \le 1$ for all such $p$ and $y$, provided $\lambda \in [0,\lambda_0]$. 

For $\lambda > \lambda_0$, $p \in [0,\frac12 e^{-\psi^*(x_0)}]$, and $y \ge 1$, we have the bound $f(\lambda,p,y) \le \frac12 + e^{-\lambda_0(x_0 + y) - \psi(\lambda_0)}$. This uses that $x_0 > 0$ and that $\psi$ is nondecreasing on $[0,\infty)$. The right-hand side is bounded by one for all $y \ge y_0$, where we may take $y_0 = \max\{1,\lambda_0^{-1}(\log(2) - \psi(\lambda_0)) - x_0\}$. In summary, we have the sub-$\psi$ inequality $f(\lambda,p,y) \le 1$ for all $\lambda \in \R$, provided $p \in [0,\frac12 e^{-\psi^*(x_0)}]$ and $y \ge y_0$.
\end{proof}

\begin{lemma} \label{L_sub_psi_empty_set}
The only subset of $\dom(\psi^*)$ that is $\Pcal$-negligible is the empty set.
\end{lemma}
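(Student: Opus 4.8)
The plan is to show that every nonempty Borel subset $A \subset \dom(\psi^*)$ fails to be $\Pcal$-negligible by exhibiting a single sub-$\psi$ probability measure $\mu$ with $\mu(A) > 0$. Since $A$ is nonempty, fix a point $x_0 \in A$; it suffices to produce a sub-$\psi$ measure that charges $x_0$, or more precisely that charges some point of $A$. I would first handle the case $x_0 \le 0$: here the Dirac mass $\delta_{x_0}$ is itself sub-$\psi$, because for $\lambda \ge 0$ we have $e^{\lambda x_0 - \psi(\lambda)} \le e^{0 - 0} = 1$ (using $x_0 \le 0$, $\psi \ge 0$), and for $\lambda < 0$ the constraint is vacuous since $\psi(\lambda) = \infty$. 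So $\delta_{x_0}(A) = 1 > 0$ and we are done in this case.

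The remaining case is $x_0 > 0$, and this is where Lemma~\ref{L_sub_psi_two_points_measure} does the work. Apply that lemma with this $x_0$ to obtain the threshold $y_0 \ge 0$, and then choose any $p \in (0, \tfrac12 e^{-\psi^*(x_0)}]$ (which is a nondegenerate interval because $x_0 \in \dom(\psi^*)$ means $\psi^*(x_0) < \infty$, so $e^{-\psi^*(x_0)} > 0$) and any $y \ge y_0$; the measure $\nu = p\delta_{x_0} + (1-p)\delta_{-x_0 - y}$ is then sub-$\psi$ by the lemma. Since $p > 0$, we have $\nu(A) \ge \nu(\{x_0\}) = p > 0$, so $A$ is not $\Pcal$-negligible. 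One small point to verify is that $\nu$ is genuinely supported inside $\dom(\psi^*) = \Xcal$: the point $x_0$ lies in $\dom(\psi^*)$ by assumption, and the point $-x_0 - y$ is negative (as $x_0, y > 0$), hence lies in $\dom(\psi^*)$ because the negative half-line is contained in $\dom(\psi^*)$ (recall $\psi^*(x) = 0$ for $x \le 0$, noted just before Lemma~\ref{L_sub_psi_support}). Thus $\nu \in \Mcal_1(\Xcal)$ and $\nu \in \Pcal$.

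Combining the two cases, for every nonempty $A$ there is a measure in $\Pcal$ assigning it positive mass, so the only $\Pcal$-negligible subset of $\dom(\psi^*)$ is $\emptyset$. I do not anticipate a serious obstacle here: the entire difficulty has been front-loaded into Lemma~\ref{L_sub_psi_two_points_measure}, whose role is precisely to guarantee that a two-point measure charging an arbitrary positive $x_0$ can be made sub-$\psi$ by pushing the second atom far enough out to the left and keeping the mass at $x_0$ small. The only things to be careful about in writing this up are (i) checking $e^{-\psi^*(x_0)} > 0$ so the interval for $p$ is nonempty, and (ii) confirming both atoms lie in $\dom(\psi^*)$ so that $\nu$ really is a probability measure on $\Xcal$.
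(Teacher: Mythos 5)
Your proof is correct and takes essentially the same approach as the paper: pick $x_0 \in A$ and invoke Lemma~\ref{L_sub_psi_two_points_measure} to produce a sub-$\psi$ measure charging $x_0$. Your case split on the sign of $x_0$ is unnecessary (the two-point lemma already covers $x_0 \le 0$), but the extra checks you flag---that $e^{-\psi^*(x_0)} > 0$ so $p$ can be taken strictly positive, and that both atoms lie in $\dom(\psi^*)$---are exactly the implicit details behind the paper's one-line argument.
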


\begin{proof}
Let $A$ be any nonempty measurable subset of $\dom(\psi^*)$ and pick $x_0 \in A$. Lemma~\ref{L_sub_psi_two_points_measure} yields a sub-$\psi$ distribution that charges $x_0$. Thus $A$ is not $\Pcal$-negligible.
\end{proof}

\begin{lemma} \label{L_sub_psi_weight_function}
Every $f \in \Lcal^\Phi$ satisfies $\sup_{x \in \dom(\psi^*)} |f(x)| e^{-\psi^*(x)} < \infty$.
\end{lemma}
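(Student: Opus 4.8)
The plan is to prove the contrapositive. Suppose $f \in \Lcal$ satisfies $\sup_{x \in \dom(\psi^*)} |f(x)| e^{-\psi^*(x)} = \infty$; I will construct a single sub-$\psi$ distribution $\mu$ with $\int_\Xcal |f|\,d\mu = \infty$, so that by definition of $\Lcal^\Phi$ we get $f \notin \Lcal^\Phi$. The measure $\mu$ will be a countable mixture of the two-atom measures furnished by Lemma~\ref{L_sub_psi_two_points_measure}.

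First I would choose the points. Since the supremum is infinite and $f$ is real-valued (hence finite everywhere), for each $n \in \N$ there is a point $x_n \in \dom(\psi^*)$ with $|f(x_n)| e^{-\psi^*(x_n)} \ge 2^n$; note $\psi^*(x_n) < \infty$ because $x_n \in \dom(\psi^*)$. Then, for each $n$, I apply Lemma~\ref{L_sub_psi_two_points_measure} with $x_0 = x_n$ and the largest admissible weight $p_n := \tfrac12 e^{-\psi^*(x_n)} \in [0,\tfrac12]$, obtaining some $y_n \ge 0$ such that $\nu_n := p_n \delta_{x_n} + (1-p_n)\delta_{-x_n - y_n}$ is sub-$\psi$; by Lemma~\ref{L_sub_psi_support}, $\nu_n$ is concentrated on $\dom(\psi^*) = \Xcal$, so both atoms lie in $\Xcal$.

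Next I form the probability measure $\mu := \sum_{n\ge1} 2^{-n}\nu_n$ on $\Xcal$. Because the defining inequality of a sub-$\psi$ measure, $\int_\Xcal e^{\lambda x}\mu(dx)\le e^{\psi(\lambda)}$ for $\lambda \in \dom(\psi)$, is affine in $\mu$, and the integral commutes with the (nonnegative) countable sum by Tonelli's theorem, $\mu$ is again sub-$\psi$, i.e.\ $\mu \in \Pcal$. Finally, discarding the $\delta_{-x_n-y_n}$ contributions and using Tonelli once more,
\begin{align*}
\int_\Xcal |f|\, d\mu &= \sum_{n\ge1} 2^{-n}\int_\Xcal |f|\,d\nu_n \ge \sum_{n\ge1} 2^{-n} p_n |f(x_n)| \\
&= \tfrac12\sum_{n\ge1} 2^{-n} e^{-\psi^*(x_n)} |f(x_n)| \ge \tfrac12\sum_{n\ge1} 1 = \infty,
\end{align*}
which establishes the contrapositive.

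I do not expect a serious obstacle here: essentially all the work is done by Lemma~\ref{L_sub_psi_two_points_measure}, and the remainder is bookkeeping. The two small points that need care are the growth rate $2^n$ in the choice of $x_n$ (picked so that the series still diverges after weighting by the mixing weights $2^{-n}$, which would fail for, say, polynomial growth), and the membership $-x_n - y_n \in \dom(\psi^*)$, which follows immediately from Lemma~\ref{L_sub_psi_support} since $\nu_n$ is sub-$\psi$.
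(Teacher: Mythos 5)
Your proof is correct and follows essentially the same argument as the paper: the contrapositive, the same choice of witness points $x_n$, the same application of Lemma~\ref{L_sub_psi_two_points_measure} with $p_n = \tfrac12 e^{-\psi^*(x_n)}$, and the same geometric mixture $\mu = \sum_n 2^{-n}\nu_n$, concluding by the same lower bound on $\int |f|\,d\mu$.
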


\begin{proof}
We prove the contrapositive. Let $f$ be measurable with $\sup_{x \in \dom(\psi^*)} |f(x)| e^{-\psi^*(x)} = \infty$. Then for each $n \in \N$, there exists $x_n \in \dom(\psi^*)$ such that $|f(x_n)| \ge 2^n e^{\psi^*(x_n)}$. Furthermore, Lemma~\ref{L_sub_psi_two_points_measure} yields $y_n \ge 0$ such that the probability measure $\nu_n = p_n \delta_{x_n} + (1-p_n) \delta_{-x_n-y_n}$ with $p_n = \frac12 e^{-\psi^*(x_n)}$ is sub-$\psi$. Then so is the mixture $\mu = \sum_{n \in \N} 2^{-n} \nu_n$. On the other hand,
\[
\int_\Xcal |f(x)| \mu(dx) = \sum_{n \in \N} 2^{-n} \int_\Xcal |f(x)| \nu_n(dx) \ge \sum_{n \in \N} 2^{-n} p_n |f(x_n)| \ge \sum_{n \in \N} p_n e^{\psi^*(x_n)} = \infty.
\]
This shows that $f$ does not belong to $\Lcal^\Phi$, and completes the proof.
\end{proof}

\begin{proof}[Proof of Theorem~\ref{T_sub_psi_rep}\ref{T_sub_psi_rep_1}]
We will make use of the space
\[
E = \{\mu \in \Mcal \colon \int_\Xcal e^{\psi^*} d|\mu| < \infty\},
\]
which is a Banach space with the weighted total variation norm $\|\mu\| = \int_\Xcal e^{\psi^*} d|\mu|$. The Banach space dual $E'$ is itself a Banach space with the dual norm $\|\varphi\|' = \sup\{\varphi(\mu)\colon \mu \in E, \|\mu\| \le 1\}$, and admits the weak$^*$ topology $\sigma(E',E)$; see Appendix~\ref{S_krein_smulian}. The positive cones of $E$ and $E'$ are $E_+ = \{\mu \in E\colon \mu \ge 0\}$ and $E'_+ = \{\varphi \in E' \colon \varphi(\mu) \ge 0 \text{ for all } \mu \in E_+\}$.

For any $f \in \Lcal^\Phi$ and $\mu \in E$ we have
\begin{equation} \label{eq_sub_psi_bdd_lin}
\int_\Xcal |f| d|\mu| = \int_\Xcal |f| e^{-\psi^*} e^{\psi^*} d|\mu| \le c_f \| \mu \|,
\end{equation}
where $c_f = \sup |f| e^{-\psi^*}$ is finite thanks to Lemma~\ref{L_sub_psi_weight_function}. From \eqref{eq_sub_psi_bdd_lin} it follows that every $f \in \Lcal^\Phi$ defines a bounded linear functional $\varphi_f(\mu) = \int_\Xcal f d\mu$ on $E$, and we may thus regard $\Lcal^\Phi$ as a subspace of $E'$. In particular, $E'$ contains $\Gcal$, which is defined in \eqref{eq:240309} and is a subset of $\Lcal^\Phi$. We will show below that
\begin{equation} \label{eq_sub_psi_big_closure}
\Gcal - E'_+ \text{ is $\sigma(E',E)$-closed.}
\end{equation}
Once this has been done, the proof of Theorem~\ref{T_sub_psi_rep}\ref{T_sub_psi_rep_1} is completed as follows. Observe that $\Lcal^\Phi \cap (\Gcal - E'_+) = \Gcal - \Lcal^\Phi \cap E'_+ = \Gcal - \Lcal^\Phi_+$, where the equality $\Lcal^\Phi \cap E'_+ = \Lcal^\Phi_+$ holds because a function $f \in \Lcal^\Phi$ is nonnegative if and only if $\int_\Xcal f d\mu \ge 0$ for all $\mu \in E_+$. Consequently, \eqref{eq_sub_psi_big_closure} implies that $\Gcal - \Lcal^\Phi_+$ is closed in $\sigma(\Lcal^\Phi, E)$, which is the trace of $\sigma(E',E)$ on $\Lcal^\Phi$. Now, thanks to \eqref{eq_sub_psi_bdd_lin}, $E$ is a subset of $\Mcal^\Phi$. Thus the topology $\sigma(\Lcal^\Phi,E)$ is weaker than $\sigma(\Lcal^\Phi, \Mcal^\Phi)$, and we conclude that $\Gcal - \Lcal^\Phi_+$ is closed in the latter topology as well. This establishes \eqref{eq_sub_psi_small_closure} and proves the first part of the theorem.

We are left with proving \eqref{eq_sub_psi_big_closure}. Thanks to the Krein--\v{S}mulian theorem (see Theorem~\ref{T_Krein_Smulian}), we only need to show that
\[
\text{$(\Gcal - E'_+) \cap B'_r$ is $\sigma(E', E)$-closed for every $r \in (0,\infty)$,}
\]
where $B'_r = \{\varphi \in E' \colon \|\varphi\|' \le r\}$ is the centered closed dual ball of radius $r$. Fix any $r \in (0,\infty)$. Let $\varphi = g - \eta \in (\Gcal - E'_+) \cap B'_r$ be arbitrary, and observe that for any $\mu \in E_+$ with $\|\mu\| \le 1$ we have
\[
-r \le -\|\varphi\|' \le \varphi(\mu) = \int_\Xcal g d\mu - \eta(\mu) \le \int_\Xcal g d\mu.
\]
By taking $\mu = \delta_x$ we find that $g \ge -r$ pointwise. We conclude from this that
\[
(\Gcal - E'_+) \cap B'_r = (\Gcal_r - E'_+) \cap B'_r,
\]
where $\Gcal_r = \{g \in \Gcal \colon g \ge -r\}$. We will argue that $\Gcal_r$ is $\sigma(E',E)$-compact. This will conclude the proof because $E'_+$ and $B'_r$ are both $\sigma(E',E)$-closed, the sum of a compact set and a closed set is closed, and the intersection of two closed sets is closed.

To show that $\Gcal_r$ is $\sigma(E',E)$-compact, we define
\[
K_r = \{\pi \in \Mcal_+(\Lambda) \colon \pi(\Lambda) \le r\},
\]
where we recall that $\Lambda = [0,\lambda_\text{max}]$ is compact. Then $K_r$ is a compact subset of $\Mcal_+(\Lambda)$, equipped with the usual weak topology induced by duality with the set of real-valued continuous functions on $\Lambda$. Next, define the map
\[
T \colon \pi \mapsto T(\pi) = \int_\Lambda g_\lambda \pi(d\lambda)
\]
from $K_r$ to $\Lcal^\Phi \subset E'$. Here we identify $T(\pi)$ with the linear functional $\varphi_{T(\pi)}(\mu) = \int_\Xcal T(\pi) d\mu$ on $E$. We claim that
\begin{equation} \label{eq_sub_psi_Gr_TKr}
\Gcal_r = T(K_r).
\end{equation}
The inclusion `$\supset$' is clear since $g_\lambda \ge -1$ for all $\lambda$. For the inclusion `$\subset$', consider any $g \in \Gcal_r$, that is, $g = \int_\Lambda g_\lambda \pi(d\lambda) \ge -r$ for some $\pi \in \Mcal_+(\Lambda)$. Now, for all $x < 0$ and $\lambda \in (0,\lambda_\text{max}]$, we have $-1 \le g_\lambda(x) \le 0$ and $\lim_{x\to-\infty} g_\lambda(x) = -1$, while $g_0(x) = 0$ for all $x$. The dominated convergence theorem then yields $-r \le \lim_{x \to -\infty} g(x) = -\pi((0,\lambda_\text{max}])$. Thus the measure $\pi' = \pi(\fdot \cap (0,\lambda_\text{max}])$ belongs to $K_r$, and we have $g = \int_\Lambda g_\lambda \pi'(d\lambda)$. This completes the proof of \eqref{eq_sub_psi_Gr_TKr}.

Next, we claim that $T$ is continuous when $E'$ is equipped with $\sigma(E',E)$. This is the initial topology generated by the maps $\varphi \mapsto \varphi(\mu)$, $\mu \in E$, so to show continuity it suffices to show that the composition
\begin{equation} \label{eq_sub_psi_composition_continuous_new}
\pi \mapsto \varphi_{T(\pi)}(\mu) = \int_\Xcal T(\pi) d\mu = \int_{\Lambda} \int_\Xcal g_\lambda(x) \mu(dx) \pi(d\lambda)
\end{equation}
from $\Mcal_+(\Lambda)$ to $\R$ is continuous for every $\mu \in E$. (We used Fubini's theorem to interchange the integrals on the right-hand side of \eqref{eq_sub_psi_composition_continuous_new}.)
The map $\lambda \mapsto \int_\Xcal g_\lambda(x) \mu(dx)$ is continuous on the compact set $\Lambda$. This follows from the dominated convergence theorem because $g_\lambda$ is continuous in $\lambda$ and dominated in absolute value by $e^{\psi^*}$ which is $\mu$-integrable by definition of $E$. Thus by definition of the weak topology on $\Mcal_+(\Lambda)$, the map in \eqref{eq_sub_psi_composition_continuous_new} is continuous. We conclude that $T$ is continuous, and hence that $\Gcal_r = T(K_r)$ is $\sigma(E',E)$-compact. This completes the proof of Theorem~\ref{T_sub_psi_rep}\ref{T_sub_psi_rep_1}.
\end{proof}

\begin{proof}[Proof of Theorem~\ref{T_sub_psi_rep}\ref{T_sub_psi_rep_2}]
Consider a $[0,\infty]$-valued measurable function $h$ that is pointwise dominated on $\dom(\psi^*)$ by $\int_\Lambda e^{\lambda x - \psi(\lambda)}\pi(d\lambda)$ for some $\pi \in \Mcal_1(\Lambda)$. Tonelli's theorem and the definition of $\Pcal$ then yields $\int_\Xcal h d\mu \le 1$ for all $\mu \in \Pcal$, showing that $h$ is an e-variable.

Conversely, let $h$ be an e-variable and set $f = h-1$. Then by part~\ref{T_sub_psi_rep_1} of the theorem, there is some $\pi' \in \Mcal_+(\Lambda)$ such that $f(x) \le \int_\Lambda g_\lambda(x) \pi'(d\lambda)$ for all $x \in \dom(\psi^*)$. Since $f \ge -1$, the argument after \eqref{eq_sub_psi_Gr_TKr} with $r=1$ yields $\pi'((0,\lambda_\text{max}]) \le 1$. Thus the measure $\pi = \pi'(\fdot \cap (0,\lambda_\text{max}]) + (1-\pi'((0,\lambda_\text{max}]))\delta_0$ belongs to $\Mcal_1(\Lambda)$. Since $g_0(x) = 0$ for all $x$, we have $\int_\Lambda g_\lambda(x) \pi'(d\lambda) = \int_\Lambda g_\lambda(x) \pi(d\lambda)$, and thus $h(x) = 1+f(x) \le\int_\Lambda (1 + g_\lambda(x))\pi(d\lambda) = \int_\Lambda e^{\lambda x - \psi(\lambda)} \pi(d\lambda)$,
for all $x \in \dom(\psi^*)$.
\end{proof}

\section{Distributions invariant under a group of symmetries}
\label{S_group_symmetry}

Let $\Sigma$ be a compact topological group acting (from the left) on the measurable space $\Xcal$. This means that every group element $\sigma \in \Sigma$ induces a map $x \mapsto \sigma x$ from $\Xcal$ to itself, the identity element of $\Sigma$ induces the identity map, and one has $(\sigma_1 \sigma_2) x = \sigma_1 (\sigma_2 x)$ for all $\sigma_1, \sigma_2 \in \Sigma$ and $x \in \Xcal$. We assume that the group action is measurable, meaning that the map $(\sigma,x) \mapsto \sigma x$ is jointly measurable, where $\Sigma$ is equipped with its Borel $\sigma$-algebra. Since $\Sigma$ is compact, it admits a unique left Haar probability measure $\pi$. Here are two examples of such group actions.

\begin{example}
\begin{enumerate}
\item The symmetric group on $n$ elements $\Sigma(n)$ acts on vectors in $\R^n$ by permuting the components. Its Haar probability measure is the normalized counting measure on $\Sigma(n)$.

\item The special orthogonal group $SO(n)$ acts on $\R^n$ by rotations. Its Haar probability measure is the uniform distribution on $SO(n)$.
\end{enumerate}
\end{example}

We use the left Haar probability measure $\pi$ to symmetrize measures and to average functions. First, for any measurable function $f$ bounded below, we define its \emph{orbit average} function $f_\pi$ by
\[
f_\pi(x) = \int_\Sigma (\sigma^*f)(x) \pi(d\sigma),
\]
where $(\sigma^* f)(x) = f(\sigma x)$ is the \emph{pullback} of $f$ under the map $x \mapsto \sigma x$.
Thus $f_\pi(x)$ is indeed the average of $f$ over the orbit $O_x = \{\sigma x \colon \sigma \in \Sigma\}$ of $x$. Next, there is a dual operation on measures (we focus on probability measures for simplicity). For any $\mu \in \Mcal_1$ we define its \emph{symmetrization} $\mu_\pi \in \Mcal_1$ by
\[
\mu_\pi(A) = \int_\Sigma (\sigma_* \mu)(A) \pi(d\sigma),
\]
where $(\sigma_* \mu)(A) = \mu(\sigma^{-1}A)$ is the \emph{pushforward} of $\mu$ under the map $x \mapsto \sigma x$. Here $\sigma^{-1} A = \{\sigma^{-1} x \colon x \in A\}$. The fact that $\mu_\pi$ has unit mass is seen by taking $A = \Xcal$ and using that $\mu$ and $\pi$ both have unit mass. The following lemma records some basic properties of the orbit averaging and symmetrization operations.

\begin{lemma} \label{L_symmetrizations}
Let $f$ be a measurable function bounded below and let $\mu \in \Mcal_1$.
\begin{enumerate}
\item\label{L_symmetrizations_1} The symmetrization $\mu_\pi$ is $\Sigma$-invariant in the sense that $\sigma_* \mu_\pi = \mu_\pi$ for all $\sigma \in \Sigma$.
\item\label{L_symmetrizations_2} One has the adjoint identity
\begin{equation}\label{eq_symmetrizaion_adjoints}
\int_\Xcal f_\pi d\mu = \int_\Xcal f d\mu_\pi.
\end{equation}
\end{enumerate}
\end{lemma}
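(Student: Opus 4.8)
## Proof plan for Lemma~\ref{L_symmetrizations}

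The plan is to verify the two claims by unwinding the definitions and applying Fubini's theorem (or Tonelli's, since in part~\ref{L_symmetrizations_2} the integrand is bounded below after adding a constant), together with the group-invariance of the left Haar measure $\pi$.

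For part~\ref{L_symmetrizations_1}, fix $\sigma_0 \in \Sigma$ and a measurable set $A \subset \Xcal$. I would compute
\[
(\sigma_{0*}\mu_\pi)(A) = \mu_\pi(\sigma_0^{-1}A) = \int_\Sigma \mu\bigl(\sigma^{-1}\sigma_0^{-1}A\bigr)\pi(d\sigma) = \int_\Sigma \mu\bigl((\sigma_0\sigma)^{-1}A\bigr)\pi(d\sigma),
\]
using the identity $\sigma^{-1}\sigma_0^{-1} = (\sigma_0\sigma)^{-1}$ in the group. The left-invariance of the Haar measure $\pi$ (that is, $\pi$ is invariant under the map $\sigma \mapsto \sigma_0\sigma$) then turns the last integral into $\int_\Sigma \mu(\sigma^{-1}A)\pi(d\sigma) = \mu_\pi(A)$, which is exactly the claimed $\Sigma$-invariance. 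The only point requiring a word of care is measurability of $\sigma \mapsto \mu(\sigma^{-1}A)$, which follows from the assumed joint measurability of the action $(\sigma,x) \mapsto \sigma x$ together with Fubini's theorem applied to the indicator $\bm1_A(\sigma x)$.

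For part~\ref{L_symmetrizations_2}, since $f$ is bounded below I may write $f = f_0 - c$ with $f_0 \ge 0$ and $c \in \R$ a constant; both sides of \eqref{eq_symmetrizaion_adjoints} are affine in $f$ and the constant $c$ contributes $-c$ to each side (as $\mu$ and $\mu_\pi$ are probability measures and $(\sigma^*\mathbf{1})(x)=1$), so it suffices to treat $f \ge 0$. Then I would expand
\[
\int_\Xcal f_\pi \, d\mu = \int_\Xcal \int_\Sigma f(\sigma x)\,\pi(d\sigma)\,\mu(dx) = \int_\Sigma \int_\Xcal f(\sigma x)\,\mu(dx)\,\pi(d\sigma),
\]
where Tonelli's theorem justifies the interchange because the integrand $(\sigma,x)\mapsto f(\sigma x)$ is nonnegative and jointly measurable (composition of the measurable action with the measurable $f$). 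For fixed $\sigma$, the change of variables for pushforwards gives $\int_\Xcal f(\sigma x)\,\mu(dx) = \int_\Xcal f\,d(\sigma_*\mu)$, so the last expression equals $\int_\Sigma \int_\Xcal f\,d(\sigma_*\mu)\,\pi(d\sigma)$. A final application of Tonelli (or the very definition of $\mu_\pi$ extended from indicators to nonnegative measurable functions by monotone convergence) identifies this with $\int_\Xcal f\,d\mu_\pi$, completing the proof.

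I do not expect a genuine obstacle here; the lemma is a bookkeeping exercise. The one place to be slightly careful is the repeated appeal to Fubini/Tonelli, which relies on the joint measurability of the group action — an assumption explicitly made in the setup — and on the fact that $\mu_\pi$ acts on nonnegative measurable functions by the same averaging formula that defines it on sets, which is the standard extension via simple functions and monotone convergence.
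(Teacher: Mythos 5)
Your proof is correct and follows essentially the same route as the paper's. For part (i), the computation is identical modulo notation: you unwind $(\sigma_{0*}\mu_\pi)(A)$, use $\rho^{-1}\sigma_0^{-1}=(\sigma_0\rho)^{-1}$, and apply left-invariance of $\pi$. For part (ii), both proofs hinge on reducing to the identity $\int_\Xcal f\,d\mu_\pi = \int_\Sigma\int_\Xcal f(\sigma x)\,\mu(dx)\,\pi(d\sigma) = \int_\Xcal f_\pi\,d\mu$; the paper establishes the two equalities for bounded $f$ (via the monotone class theorem and Fubini, respectively) and then truncates $f\wedge n$ and sends $n\to\infty$, while you subtract a constant to reduce to $f\ge 0$ and then apply Tonelli once. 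Your reduction is a mild simplification that avoids the truncation step, but the underlying ideas (joint measurability of the action to justify Fubini/Tonelli, extension of the defining formula for $\mu_\pi$ from indicators to general functions) are the same.
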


\begin{proof}
\ref{L_symmetrizations_1}: For any $\sigma \in \Sigma$ and any measurable set $A \subset \Xcal$, one has $\int_\Sigma \mu((\sigma \rho)^{-1} A) \pi(d\rho) = \int_\Sigma \mu(\rho^{-1} A) \pi(d\rho)$ thanks to the left-invariance of $\pi$. The left-hand side equals $(\sigma_* \mu_\pi)(A)$ and the right-hand side equals $\mu_\pi(A)$, showing that the two are equal.

\ref{L_symmetrizations_2}: Linearity and the definition of pushforward yield $\int_\Xcal f d\mu_\pi = \int_\Sigma \int_\Xcal f(\sigma x) \mu(dx) \pi(d\sigma)$
for every simple function $f$, and then for every bounded measurable $f$ by the monotone class theorem. On the other hand, Fubini's theorem yields $\int_\Xcal f_\pi d\mu = \int_\Sigma \int_\Xcal f(\sigma x) \mu(dx) \pi(d\sigma)$ for bounded measurable $f$. This shows \eqref{eq_symmetrizaion_adjoints} for all such $f$. For $f$ unbounded above, just apply \eqref{eq_symmetrizaion_adjoints} with $f \wedge n$ in place of $f$, send $n$ to infinity, and use monotone convergence.
\end{proof}

\begin{remark}
Given the $\Sigma$-invariance property \ref{L_symmetrizations_1} of $\mu_\pi$, it is perhaps surprising that the analogous property does \emph{not} hold for $f_\pi$ in the sense that $\sigma^* f_\pi$ and $f_\pi$ are not equal in general. 
They are however equal if $\pi$ is a \emph{right} Haar measure, since then $f_\pi(\sigma x) = \int_\Sigma f_\pi(\rho \sigma x) \pi(d\rho) = \int_\Sigma f_\pi(\rho x) \pi(d\rho) = f_\pi(x)$, using the right-invariance of $\pi$ in the second step. If the group $\Sigma$ is unimodular, for example, if it is a discrete group, then $\pi$ is both a left and right Haar measure, and thus $\sigma^* f_\pi = f_\pi$.
\end{remark}

We are interested in describing the set of e-variables for the hypothesis consisting of all $\Sigma$-invariant distributions,
\[
\Pcal = \{\mu \in \Mcal_1 \colon \mu = \sigma_* \mu \text{ for all } \sigma \in \Sigma\}.
\]
Such classes, or infinite-sample versions of them, have been studied in many recent works. For example, testing exchangeability~\citep{vovk2021testing,ramdas2022testing,saha2024testing}, two-sample and independence testing~\citep{shekhar2023nonparametric,podkopaev2023sequential,podkopaev2023sequentialb},  but there are also papers that study this class in an abstract and general manner~\citep{koning2023post,pandeva2024deep}  like we do above. We note the subtle fact that our setting is different from the case where $\mu \neq \sigma_*\mu$, but $\sigma_*\mu \in \Pcal$ whenever $\mu \in \Pcal$, for which the term `group invariance' is also used~\citep{perez2024statistics}.


The following lemma is the key to characterizing the set of e-variables for $\Pcal$.

\begin{lemma} \label{L_symmetric_char}
A distribution $\mu \in \Mcal_1$ belongs to $\Pcal$ if and only if $\int_\Xcal f d\mu = \int_\Xcal f_\pi d\mu$ for all measurable functions $f$ bounded below.
\end{lemma}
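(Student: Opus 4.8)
The plan is to reduce the stated identity to the single equation $\mu = \mu_\pi$, and then read off both implications from Lemma~\ref{L_symmetrizations}. The bridge is the adjoint identity \eqref{eq_symmetrizaion_adjoints}: it rewrites the condition ``$\int_\Xcal f\,d\mu = \int_\Xcal f_\pi\,d\mu$ for all measurable $f$ bounded below'' equivalently as ``$\int_\Xcal f\,d\mu = \int_\Xcal f\,d\mu_\pi$ for all such $f$.''

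For the ``only if'' direction, I would suppose $\mu \in \Pcal$, so that $\sigma_*\mu = \mu$ for every $\sigma \in \Sigma$. Then directly from the definition of symmetrization, $\mu_\pi(A) = \int_\Sigma (\sigma_*\mu)(A)\,\pi(d\sigma) = \int_\Sigma \mu(A)\,\pi(d\sigma) = \mu(A)$, using that $\pi$ is a probability measure; hence $\mu_\pi = \mu$. Combining this with \eqref{eq_symmetrizaion_adjoints} gives $\int_\Xcal f_\pi\,d\mu = \int_\Xcal f\,d\mu_\pi = \int_\Xcal f\,d\mu$ for every measurable $f$ bounded below, as required.

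For the ``if'' direction, I would assume $\int_\Xcal f\,d\mu = \int_\Xcal f_\pi\,d\mu$ for all measurable $f$ bounded below. Applying \eqref{eq_symmetrizaion_adjoints} and then specializing to indicator functions $f = \bm1_A$ (which are bounded, hence bounded below) yields $\mu(A) = \mu_\pi(A)$ for every measurable $A \subset \Xcal$, i.e.\ $\mu = \mu_\pi$. By Lemma~\ref{L_symmetrizations}\ref{L_symmetrizations_1} the symmetrization $\mu_\pi$ is $\Sigma$-invariant, so $\sigma_*\mu = \sigma_*\mu_\pi = \mu_\pi = \mu$ for every $\sigma \in \Sigma$, which is precisely the statement that $\mu \in \Pcal$.

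This argument is essentially bookkeeping once Lemma~\ref{L_symmetrizations} is in hand, so I do not anticipate a genuine obstacle. The only point requiring a moment's care is that the hypothesis is phrased for functions bounded below rather than merely bounded measurable functions; but since indicator functions are bounded below, the larger test class is harmless and is exactly what lets us recover $\mu = \mu_\pi$ from the integral condition.
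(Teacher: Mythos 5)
Your proof is correct and follows essentially the same route as the paper: reduce the integral condition to $\mu = \mu_\pi$ via the adjoint identity \eqref{eq_symmetrizaion_adjoints}, prove the forward direction from the definitions, and use Lemma~\ref{L_symmetrizations}\ref{L_symmetrizations_1} for the reverse direction. Your step of testing against indicator functions is just a slightly more explicit way of recovering $\mu = \mu_\pi$ from equality of integrals, which the paper leaves implicit.
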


\begin{proof}
Fix $\mu \in \Mcal_1$. We have the following chain of equivalences:
\begin{align*}
\mu \in \Pcal
\quad \Leftrightarrow \quad 
\mu = \mu_\pi
\quad &\Leftrightarrow \quad 
\int_\Xcal f d\mu = \int_\Xcal f d\mu_\pi \text{ for all measurable $f$ bounded below},
\end{align*}
where the first equivalence follows from the definition of $\Pcal$ and of $\mu_\pi$ for the forward implication, and Lemma~\ref{L_symmetrizations}\ref{L_symmetrizations_1} for the reverse implication. Thanks to \eqref{eq_symmetrizaion_adjoints}, the third statement in the above display is in turn equivalent to $\int_\Xcal f d\mu = \int_\Xcal f_\pi d\mu$ for all measurable functions $f$ bounded below. This completes the proof.
\end{proof}

\begin{theorem} \label{T_symmetric_rep}
The set $\Ecal$ of all e-variables for $\Pcal$ consists precisely of those $[0,\infty]$-valued measurable functions that are pointwise dominated by an e-variable of the form
\[
1 + f - f_\pi
\]
for some $[-1,\infty]$-valued measurable $f$ such that $f_\pi \le 0$. Moreover, every function of this form is an \emph{exact} e-variable, meaning that the e-variable property holds with equality for all $\mu \in \Pcal$.
\end{theorem}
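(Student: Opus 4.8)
The plan is to set up the constraint set $\Phi$ explicitly and then apply Theorem~\ref{T_abs_rep}\ref{T_abs_rep_2}. Lemma~\ref{L_symmetric_char} tells us that $\mu \in \Pcal$ if and only if $\int f \, d\mu = \int f_\pi \, d\mu$ for all measurable $f$ bounded below, equivalently $\int (f - f_\pi) \, d\mu = 0$ and $\int (f_\pi - f)\, d\mu = 0$ for all such $f$. So the natural constraint set is $\Phi = \{ \pm(f - f_\pi) : f \text{ measurable, bounded below} \}$; in fact it suffices to take $f$ ranging over bounded measurable functions (or even $f = \bm 1_A$ and its negatives), since by the monotone class/truncation argument in Lemma~\ref{L_symmetrizations} the equality $\int (f - f_\pi)\, d\mu = 0$ for all bounded $f$ already forces $\mu = \mu_\pi$. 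One should first verify that $\Pcal$ is indeed the hypothesis generated by this $\Phi$ — the forward direction is Lemma~\ref{L_symmetrizations}\ref{L_symmetrizations_2} applied to $f$ and $-f$, and the reverse is Lemma~\ref{L_symmetric_char} together with Lemma~\ref{L_symmetrizations}\ref{L_symmetrizations_1}.

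Next I would identify $\Ccal = \cone(\Phi) - \Lcal^\Phi_p$ and, crucially, argue that $\Ccal$ is already weakly closed so that $\overline\Ccal = \Ccal$ and Theorem~\ref{T_abs_rep} gives the representation directly without taking a closure. The key observation is that for bounded measurable $f$, both $f - f_\pi$ and $\lambda(f - f_\pi) = (\lambda f) - (\lambda f)_\pi$ are again of the same form (orbit averaging is linear), and sums $(f - f_\pi) + (g - g_\pi) = (f+g) - (f+g)_\pi$ are too; so $\cone(\Phi)$ — indeed the linear span of $\{f - f_\pi\}$ — equals $\{f - f_\pi : f \text{ bounded measurable}\}$, and one can extend to $f$ merely $[-1,\infty]$-valued with $f_\pi \le 0$ by truncation and monotone convergence. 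Then a general element of $\Ccal$ has the form $(f - f_\pi) - h$ with $h \in \Lcal^\Phi_p$; writing $f' = f - h$ one checks $f' - f'_\pi \ge (f - f_\pi) - h$ quasi-surely when... — actually the cleaner route is: since $h \ge 0$ q.s., the function $\tilde f = f - h$ satisfies $\tilde f - \tilde f_\pi = (f - f_\pi) - (h - h_\pi)$, and $h_\pi \ge 0$, so one absorbs things carefully. The upshot to establish is that $\Ccal = \{ f - f_\pi : f \text{ measurable}, f \ge -1 \text{ (or bounded below) q.s.}, f_\pi \le 0\} + (\text{nonpositive q.s.\ functions in }\Lcal^\Phi)$, and that this set is weakly closed — closedness because orbit averaging $f \mapsto f - f_\pi$ is a weakly continuous idempotent-like projection and $\Lcal^\Phi_p$ is weakly closed by Lemma~\ref{L_Lphip_closed}.

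Finally, feeding this through Theorem~\ref{T_abs_rep}\ref{T_abs_rep_2}: $\Ecal$ consists of the $[0,\infty]$-valued measurable functions that are $\Pcal$-q.s.\ dominated by $1 + (f - f_\pi)$ with $f - f_\pi \in \Ccal$, i.e.\ with $f$ bounded below and, after normalizing so that $1 + f - f_\pi \ge 0$, one can arrange $f \ge -1$ and $f_\pi \le 0$ (subtract the constant $\esssup$-type adjustments; the condition $f_\pi \le 0$ is exactly what makes $\int (1 + f - f_\pi)\, d\mu = 1$ rather than $\le 1$ fail to bite — in fact it always holds with equality). The "exactness" claim is then immediate from Lemma~\ref{L_symmetric_char}: for $\mu \in \Pcal$ we have $\int (1 + f - f_\pi) \, d\mu = 1 + \int f \, d\mu - \int f_\pi \, d\mu = 1$, using the lemma's equality. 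I expect the main obstacle to be the bookkeeping in the second paragraph: precisely characterizing $\Ccal$ — in particular showing that every element of $\cone(\Phi) - \Lcal^\Phi_p$ can be rewritten in the canonical form "$f - f_\pi$ with $f \ge -1$ q.s.\ and $f_\pi \le 0$" and that no weak closure is needed — together with handling the $[-1,\infty]$-valued (rather than bounded) functions via a monotone convergence argument that stays inside $\Lcal^\Phi$. The integrability space $\Lcal^\Phi$ here should turn out to be all measurable functions that are bounded below up to a $\Pcal$-integrable correction, but one must check that $f - f_\pi \in \Lcal^\Phi$, which follows since $\int |f_\pi| \, d\mu = \int |f_\pi|\, d\mu_\pi \le \int |f|_\pi \, d\mu_\pi$ type bounds control it for $\mu \in \Pcal$.
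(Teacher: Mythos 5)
Your approach is genuinely different from the paper's, and, as sketched, it has two real gaps.

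The paper does \emph{not} route this result through Theorem~\ref{T_abs_rep} at all (it explicitly notes this right after the proof). Instead it gives a short, elementary, self-contained argument. The exactness/forward direction is exactly as you say: if $f \ge -1$ and $f_\pi \le 0$ then $h = 1 + f - f_\pi \ge 0$, and for $\mu \in \Pcal$ Lemma~\ref{L_symmetric_char} gives $\int h\,d\mu = 1$. The converse is the clever part, and it is where the paper diverges from you: given an e-variable $h$, set $f = h - 1$ and, for each $x_0 \in \Xcal$, feed the \emph{symmetrized Dirac measure} $(\delta_{x_0})_\pi \in \Pcal$ into the e-variable inequality. Via the adjoint identity \eqref{eq_symmetrizaion_adjoints}, this yields $f_\pi(x_0) = \int f\,d(\delta_{x_0})_\pi \le 0$ for every $x_0$, i.e.\ $f_\pi \le 0$ \emph{pointwise}, whence $h = 1 + f \le 1 + f - f_\pi$ pointwise. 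This one-line trick delivers the full strength of the statement with no closure argument.

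Your plan, by contrast, would need to establish that $\Ccal = \cone(\Phi) - \Lcal^\Phi_p$ is weakly closed and then apply Theorem~\ref{T_abs_rep}\ref{T_abs_rep_2}. Two problems. First, you assert closedness ``because orbit averaging $f \mapsto f - f_\pi$ is a weakly continuous idempotent-like projection,'' but this is not justified: $f_\pi$ is only defined for $f$ bounded below (general $f \in \Lcal^\Phi$ need not be), and $\sigma(\Lcal^\Phi,\Mcal^\Phi)$-continuity of $f \mapsto f_\pi$ would require showing that the adjoint $\mu \mapsto \mu_\pi$ maps $\Mcal^\Phi$ into itself and satisfies the duality identity for all signed $\mu$, none of which is established in the paper or your proposal. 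The ``absorb things carefully'' step involving $h_\pi$ likewise remains a gap. Second, and more fundamentally, even if you made the closedness argument work, Theorem~\ref{T_abs_rep}\ref{T_abs_rep_2} only delivers $\Pcal$-q.s.\ statements, whereas the theorem claims \emph{pointwise} domination by an e-variable of the stated form. Upgrading from q.s.\ to pointwise is not automatic here (modifying $f$ on a q.s.-null set also changes $f_\pi$), and this is precisely what the symmetrized-Dirac argument buys you for free. So the abstract route, as sketched, would at best prove a weaker q.s.\ version of the result, with substantial unfinished work on the closedness side.
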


\begin{proof}
Let $f$ be a $[-1,\infty]$-valued measurable function such that $f_\pi \le 0$, and note that, in addition, $f_\pi \ge -1$. This ensures that $h = 1 + f - f_\pi$ is a well-defined $[0,\infty]$-valued function, and that we may compute $\int_\Xcal h d\mu = 1 + \int_\Xcal f d\mu - \int_\Xcal f_\pi d\mu = 1$ for every $\mu \in \Pcal$, using Lemma~\ref{L_symmetric_char} in the last step. This shows that every function of this form is an exact e-variable.

Conversely, let $h$ be an e-variable and set $f = h - 1$. Fix any $x_0 \in \Xcal$ and consider the symmetrization $\mu = (\delta_{x_0})_\pi$ of the Dirac mass at $x_0$, which belongs to $\Pcal$ due to Lemma~\ref{L_symmetrizations}\ref{L_symmetrizations_1}. Thanks to \eqref{eq_symmetrizaion_adjoints} and the e-variable property of $h$ we have $f_\pi(x_0) = \int_\Xcal f_\pi(x) \delta_{x_0}(dx) = \int_\Xcal f(x) (\delta_{x_0})_\pi(dx) = \int_\Xcal f d\mu \le 0$. Thus $f_\pi \le 0$, and we have $h \le 1 + f - f_\pi$ pointwise.
\end{proof}

So far we have not made use of the abstract characterization of e-variables, Theorem~\ref{T_abs_rep}. Indeed, we were able to describe $\Ecal$ completely without it. We may however use the abstract theorem in a different way: our next result provides a general method of identifying constraint sets $\Phi$ that generate $\Pcal$, and the abstract theorem then ensures that any e-variable can be approximated in the weak sense using conic combinations of the constraint functions.

A set $\Fcal$ of bounded measurable functions is a \emph{separating set} for $\Mcal$ if, for any $\mu \in \Mcal$, one has $\mu = 0$ if and only if $\int_\Xcal f d\mu = 0$ for all $f \in \Fcal$. Such a set separates any two distinct measures $\mu_1,\mu_2$ in the sense that there is some $f \in \Fcal$ such that $\int_\Xcal f d\mu_1 \ne \int_\Xcal f d\mu_2$. Next, a \emph{generating set} for $\Sigma$ is a subset $\Sigma_0$ such that any $\sigma \in \Sigma$ can be expressed as $\sigma = \sigma_1 \sigma_2 \cdots \sigma_n$ for some $n \in \N$ and $\sigma_1,\ldots,\sigma_n \in \Sigma_0$.

\begin{theorem} \label{T_group_inv_constr_set}
Let $\Fcal$ be a separating set for $\Mcal$, and $\Sigma_0$ a generating set for $\Sigma$. A distribution $\mu$ belongs to $\Pcal$ if and only if $\int_\Xcal (f(\sigma x) - f(x)) \mu(dx) = 0$ for all $\sigma \in \Sigma_0$ and $f \in \Fcal$. In other words, $\Pcal$ is generated by the constraint set
\[
\Phi = \{\sigma^*f - f \colon \sigma \in \Sigma_0, \ f \in \Fcal \cup (-\Fcal) \}.
\]
\end{theorem}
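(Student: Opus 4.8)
The plan is to prove the displayed equivalence directly; the ``in other words'' reformulation in terms of $\Phi$ then follows at once, since each constraint function $\sigma^*f - f$ is bounded (being a difference of bounded functions, hence $\mu$-integrable for every $\mu\in\Mcal_1$, so the integrability requirement in the definition of a hypothesis is automatic), and including both $f$ and $-f$ in the index set turns each inequality constraint $\int_\Xcal(\sigma^*f-f)\,d\mu\le 0$ into the equality constraint $\int_\Xcal(\sigma^*f-f)\,d\mu=0$. The only preliminary point worth stating explicitly is that for each fixed $\sigma$ the map $x\mapsto\sigma x$ is measurable, because the action $(\sigma,x)\mapsto\sigma x$ is assumed jointly measurable; hence $\sigma^*f-f\in\Lcal$ and $\Phi$ is a legitimate constraint set.

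For the forward implication, suppose $\mu\in\Pcal$, so $\sigma_*\mu=\mu$ for every $\sigma\in\Sigma$. For any $\sigma\in\Sigma_0$ and any bounded measurable $f$, the change-of-variables identity $\int_\Xcal \sigma^*f\,d\mu=\int_\Xcal f\,d(\sigma_*\mu)=\int_\Xcal f\,d\mu$ (already used implicitly around Lemma~\ref{L_symmetrizations}) gives $\int_\Xcal(\sigma^*f-f)\,d\mu=0$; in particular this holds for all $f\in\Fcal$.

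For the reverse implication I would proceed in two steps. First, establish invariance under the generators: fix $\sigma\in\Sigma_0$; by hypothesis and the same change-of-variables identity, $\int_\Xcal f\,d(\sigma_*\mu)=\int_\Xcal f\,d\mu$ for every $f\in\Fcal$, so $\int_\Xcal f\,d(\sigma_*\mu-\mu)=0$ for all $f\in\Fcal$; since $\sigma_*\mu-\mu\in\Mcal$ and $\Fcal$ is a separating set for $\Mcal$, this forces $\sigma_*\mu=\mu$. Second, bootstrap to all of $\Sigma$: using the functoriality identity $(\sigma\tau)_*\mu=\sigma_*(\tau_*\mu)$ and induction on word length, any $\sigma=\sigma_1\cdots\sigma_n$ with $\sigma_i\in\Sigma_0$ satisfies $\sigma_*\mu=\mu$, and since $\Sigma_0$ generates $\Sigma$ this holds for every $\sigma\in\Sigma$, i.e.\ $\mu\in\Pcal$.

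I do not expect a genuine obstacle here: the argument is a short assembly of the pushforward change-of-variables formula, the definition of a separating set, and the definition of a generating set together with $(\sigma\tau)_*=\sigma_*\circ\tau_*$. If any step carries the weight of the proof, it is the appeal to the separating property in the first step of the reverse direction, which is what upgrades ``$\mu$ and $\sigma_*\mu$ agree on $\Fcal$'' to ``$\mu=\sigma_*\mu$''; everything else is formal.
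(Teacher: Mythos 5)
Your proof is correct and takes a genuinely shorter route than the paper's. Where the paper uses the separating property indirectly---invoking Hahn--Banach to argue that $\mathrm{span}(\Fcal)$ is $\sigma(\Lcal_b,\Mcal)$-dense in $\Lcal_b$, then extending the integral condition from $(\Sigma_0,\Fcal)$ to $(\Sigma_0,\Lcal_b)$, then to $(\Sigma,\Lcal_b)$ via a telescoping identity at the level of test functions, and finally integrating against the Haar measure to invoke Lemma~\ref{L_symmetric_char}---you apply the separating property directly to the finite signed measure $\sigma_*\mu - \mu \in \Mcal$ to conclude $\sigma_*\mu = \mu$ for each generator $\sigma \in \Sigma_0$, and then bootstrap to all of $\Sigma$ purely at the level of measures via the functoriality identity $(\sigma\tau)_* = \sigma_* \circ \tau_*$. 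This bypasses the weak-topology machinery, the telescoping of pullbacks, Fubini, and the appeal to Lemma~\ref{L_symmetric_char} entirely. The paper's route does record the slightly stronger intermediate fact that $\int_\Xcal (f(\sigma x)-f(x))\,\mu(dx)=0$ for \emph{all} $\sigma\in\Sigma$ and \emph{all} $f\in\Lcal_b$, but since that is an immediate consequence of $\mu\in\Pcal$ once the theorem is proved, nothing is lost by your approach. One small point you might make explicit: the paper's definition of a generating set requires only products of elements of $\Sigma_0$ (no inverses), which is exactly what your induction on word length handles; and if one preferred the more common definition allowing inverses, the step $\sigma_*\mu=\mu \Rightarrow (\sigma^{-1})_*\mu=\mu$ is immediate, so the argument is robust either way.
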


\begin{proof}
The forward implication follows because for any $\mu \in \Pcal$ and any bounded measurable function $f$, $\int_\Xcal f(\sigma x) \mu(dx) = \int_\Xcal f(x) (\sigma_*\mu)(dx) = \int_\Xcal f(x) \mu(dx)$. We thus focus on the reverse implication and assume that
\begin{equation}\label{T_group_inv_constr_set_1}
\int_\Xcal (f(\sigma x) - f(x)) \mu(dx) = 0
\end{equation}
for all $\sigma \in \Sigma_0$ and $f \in \Fcal$, where $\mu \in \Mcal_1$ is fixed. We let $\Lcal_b$ denote the space of all bounded measurable functions on $\Xcal$.

Because $\Fcal$ is separating, its span is $\sigma(\Lcal_b,\Mcal)$-dense in $\Lcal_b$. Indeed, if the span were not dense, the Hahn--Banach theorem would yield a nonzero measure vanishing on the span, contradicting that $\Fcal$ is separating. Furthermore, the map $f \mapsto \int_\Xcal f(\sigma x) \mu(dx) = \int_\Xcal f d(\sigma_* \mu)$ is $\sigma(\Lcal_b,\Mcal)$-continuous by definition of the topology. Combining these two facts, we deduce that \eqref{T_group_inv_constr_set_1} holds for all $\sigma \in \Sigma_0$ and all $f \in \Lcal_b$.

Next, fix any $\sigma \in \Sigma$ and $f \in \Lcal_b$, write $\sigma = \sigma_1 \sigma_2 \cdots \sigma_n$ for some $\sigma_1,\ldots,\sigma_n \in \Sigma_0$, and set $f_i(x) = f(\sigma_1\cdots\sigma_i x)$ for $i=1,\ldots,n-1$ as well as $f_0(x) = f(x)$. We then have $f_i(x) = f_{i-1}(\sigma_i x)$ and $f_{n-1}(\sigma_n x) = f(\sigma x)$, and hence also the telescoping sum
\[
f(\sigma x) - f(x) = \sum_{i=0}^{n-1} (f_i(\sigma_i x) - f_i(x)).
\]
Since each $f_i$ belongs to $\Lcal_b$, we may use that \eqref{T_group_inv_constr_set_1} holds for functions in $\Lcal_b$ and group elements in $\Sigma_0$ to obtain
\[
\int_\Xcal (f(\sigma x) - f(x)) \mu(dx) = \sum_{i=0}^{n-1} \int_\Xcal (f_i(\sigma_i x) - f_i(x)) \mu(dx) = 0.
\]
This shows that \eqref{T_group_inv_constr_set_1} actually holds for all $\sigma \in \Sigma$ and all $f \in \Lcal_b$. By integrating over $\Sigma$ and using Fubini's theorem, we obtain $\int_\Xcal f_\pi d\mu = \int_\Xcal f d\mu$ for all $f \in \Lcal_b$, and then by monotone convergence for all measurable $f$ bounded below. Lemma~\ref{L_symmetric_char} now yields $\mu \in \Pcal$.
\end{proof}

\begin{example}
A finite sequence of random variables $X_1,\ldots,X_n$ is exchangeable if and only if its joint distribution is invariant under the symmetric group $\Sigma(n)$. The cardinality of $\Sigma(n)$ is $n!$, and the (left and right) Haar probability measure $\pi$ is simply the uniform distribution assigning mass $1/n!$ to each permutation.
The symmetrization of any probability measure $\mu$ on $\R^n$ is then
\[
\mu_\pi(A) = \frac{1}{n!} \sum_{\sigma \in \Sigma(n)} \mu(\sigma^{-1}A) = \frac{1}{n!} \sum_{\sigma \in \Sigma(n)} \mu(\sigma A).
\]
Moreover, the e-variables in Theorem~\ref{T_symmetric_rep} take the form
\[
1 + f(x) - \frac{1}{n!} \sum_{\sigma \in \Sigma(n)} f(\sigma x),
\]
for some $[-1,\infty)$-valued measurable $f$ such that its symmetrization $\frac{1}{n!} \sum_{\sigma \in \Sigma(n)} f(\sigma x)$ is nonpositive. In particular, all e-variables for this hypothesis are finite, because otherwise the symmetrization could not possibly be nonpositive (and the same would hold for any finite group).
Alternatively, one can actually write any e-variable as
\[
\frac{g(x)}{\frac{1}{n!} \sum_{\sigma \in \Sigma(n)} g(\sigma x)},
\]
using any $[0,\infty)$-valued measurable function $g$ and interpreting $0/0=1$.
One can move between these two representations easily. In one direction, given $f$, define $g(x) = 1 + f(x) - \frac{1}{n!} \sum_{\sigma \in \Sigma(n)} f(\sigma x)$ and note that its symmetrization equals one to conclude that the above two expressions are equal. In the other direction, given $g$, define $f(x) = (g(x) / \frac{1}{n!} \sum_{\sigma \in \Sigma(n)} g(\sigma x)) - 1$ and note that its symmetrization equals zero  to reach the same conclusion.

Next, while $\Sigma(n)$ contains $n!$ elements, it admits small generating sets. For instance, in the context of Theorem~\ref{T_group_inv_constr_set}, one may take $\Sigma_0$ to consist of only two permutations: the transposition $\sigma_{12}: (1,2,3,\ldots,n) \mapsto (2,1,3,\ldots,n)$ and the cyclic permutation $\sigma_\circ: (1,2,\ldots,n) \mapsto (n,1,2,\ldots,n-1)$. One could take $\mathcal F$ to be, for example, the set of all bounded continuous functions. Theorem~\ref{T_group_inv_constr_set} then yields that the set $\Pcal$ of exchangeable distributions on $n$ observations is generated by 
\[
\Phi = \{\sigma_{12}^*f - f, \sigma^*_\circ f - f \colon  \ f \in \Fcal \}.
\]

Finally, we note that we could have also derived these e-variables by conditioning on the unordered set of observed data, which we represent here using the order statistics, $X_{()} = (X_{(1)},\dots,X_{(n)})$. Effectively, conditioning on $X_{()}$ collapses the data distribution on $\mathbb R^n$  to a distribution on the orbit $\{\sigma X_{()}: \sigma \in \Sigma\}$ having $n!$ elements. 
Under the null $\Pcal$, this conditional distribution collapses to a point null hypothesis, which is a uniform distribution over the orbit.
The above e-variables are \emph{conditional e-variables} in the sense that they satisfy
\[
\E_\mu\left[\left.\frac{g(x)}{\frac{1}{n!} \sum_{\sigma \in \Sigma(n)} g(\sigma x)} \, \right| \, X_{()} \right] \le 1, \quad \mu \in \Pcal.
\]
In fact, one can straightforwardly interpret the above e-variable as a likelihood ratio, with the denominator being the aforementioned uniform, to read off the implied alternative in the numerator, against which this e-variable is in fact log-optimal. See also~\cite{perez2024statistics} for the study of log-optimal e-variables in this context. 
\end{example}

\section{Admissible e-variables and complete classes} \label{S_admissibility}

Consider a hypothesis $\Pcal$ and its set of e-variables $\Ecal$. Recall that an e-variable $h \in \Ecal$ is \emph{admissible} if any other $h' \in \Ecal$ with $h' \ge h$, $\Pcal$-q.s., actually satisfies $h' = h$, $\Pcal$-q.s. The results in this paper, in particular Theorem~\ref{T_finitely_generated}, Corollary~\ref{C_evar_max_finite}, Theorem~\ref{T_finite_constraint_set_reference_measure}, Theorem~\ref{T_sub_psi_rep}, and Theorem~\ref{T_symmetric_rep}, lead to conditions for admissibility. We now elaborate on this aspect of our results and show that we actually obtain \emph{minimal complete classes}, defined below, for the hypotheses under consideration. This generalizes \citet[Section~3]{clerico2024optimalevaluetestingproperly}.

\begin{definition} \label{D_complete_class}
A \emph{complete class} of e-variables for $\Pcal$ is a subset $\Ecal' \subset \Ecal$ such that every $h \in \Ecal$ is $\Pcal$-q.s.\ dominated by some $h' \in \Ecal'$. A complete class $\Ecal'$ is \emph{minimal} if removing an e-variable (and all its $\Pcal$-versions) from $\Ecal'$ renders the class non-complete.
\end{definition}

For statistical applications it suffices to work with complete classes, and minimal complete classes are preferable. Clearly $\Ecal$ itself is a complete class, but it is not minimal. Existence of a minimal complete class has some useful implications, recorded in the following result which extends \citet[Lemma~3.3]{clerico2024optimalevaluetestingproperly} to our more general class of hypotheses.

\begin{lemma} \label{L_minimal_complete_class}
A minimal complete class $\Ecal'$ exists if and only if every e-variable is $\Pcal$-q.s.\ dominated by an admissible e-variable. In this case, every $h' \in \Ecal'$ is admissible, and every admissible $h \in \Ecal$ has a version $h' \in \Ecal'$. In particular, a minimal complete class is unique provided different $\Pcal$-versions of any e-variable are identified.
\end{lemma}

\begin{proof}
We drop the ``$\Pcal$-q.s.''~qualifier for brevity. If every e-variable is dominated by an admissible e-variable, then $\Ecal' = \{h' \in \Ecal \colon h' \text{ is admissible}\}$ is a complete class. It is also minimal, simply because admissible e-variables cannot be strictly dominated. Conversely, if a minimal complete class $\Ecal'$ exists, then by completeness, every $h \in \Ecal$ is dominated by some $h' \in \Ecal'$, but by minimality, $h'$ cannot be further strictly dominated, so it is admissible. This reasoning also yields the remaining statements.
\end{proof}

Any hypothesis with a reference measure admits a minimal complete class. This extends \citet[Lemma~3.6]{clerico2024optimalevaluetestingproperly}, with a similar proof.

\begin{theorem}
Suppose $\Pcal$ admits a reference measure $\bar\mu \in \Mcal_1$, i.e., $\mu \ll \bar\mu$ for all $\mu \in \Pcal$. Then a minimal complete class exists.
\end{theorem}
 
\begin{proof}
We use an argument from \cite{ramdas2022admissibleanytimevalidsequentialinference} based on transfinite induction to show that every e-variable is dominated by an admissible e-variable.
The result then follows from Lemma~\ref{L_minimal_complete_class}. We pick any $h \in \Ecal$ and recursively define an increasing transfinite sequence of e-variables $h_\beta \in \Ecal$ indexed by the countable ordinals $\beta$. 
First, $h_0 = h$. For a successor ordinal $\beta = \alpha + 1$ such that $h_\alpha$ has already been defined, if $h_\alpha$ is admissible we set $h_\beta = h_\alpha$, otherwise we choose any $h_\beta \in \Ecal$ that strictly dominates $h_\alpha$. (I.e., $h_\beta \ge h_\alpha$, $\bar\mu$-a.s., and $h_\beta > h_\alpha$ with positive $\bar\mu$-probability.)
For any countable limit ordinal $\beta = \lim_n \alpha_n$ such that $h_{\alpha} \in \Ecal$ has been defined for all $\alpha < \beta$, we let $h_\beta = \lim_n h_{\alpha_n}$. By the monotone convergence theorem, $h_\beta \in \Ecal$. By transfinite induction, $h_\beta \in \Ecal$ for every countable ordinal $\beta$.
Consider now the decreasing $[0,1]$-valued transfinite sequence $c_\beta = \int_\Xcal e^{-h_\beta} d\bar\mu$.
This sequence must become constant for
all $\beta$ beyond some countable ordinal $\beta_0$; otherwise we would have an uncountable strictly decreasing sequence in $[0,1]$, which is impossible. By construction, $h_{\beta_0}$ is then admissible and dominates $h$.
\end{proof}

Unfortunately, as the following example shows, there are hypotheses for which a minimal complete class does not exist. A different example, involving a certain non-measurable set, is given by \citet[Appendix~C]{clerico2024optimalevaluetestingproperly}.

\begin{example}
Let $\Xcal = [0,1]$ and let $\Pcal$ consist of the standard uniform distribution together with all measures of the form $(\delta_0 + \delta_x)/2$ for $x \in [0,1]$. Then $\Ecal$ consists of all nonnegative measurable functions $h$ with $\int_0^1 h(x) dx \le 1$ and $h(0) + h(x) \le 2$ for all $x \in [0,1]$. Note that $x = 0$ is included here, so that $h(0) \le 1$.

We claim that if $h$ is an e-variable with $h(0) < 1$, then it cannot be admissible. To show this, assume $h(0) < 1$ and let $y \in (0,1]$ be such that $h(y) < 2 - h(0)$; such $y$ must exist, because otherwise $\int_0^1 h(x) dx = 2 - h(0) > 1$, contradicting the e-variable property. Then, the function $g$ with $g(y) = 2 - h(0)$ and $g = h$ elsewhere is still an e-variable, and it strictly dominates $h$ under $(\delta_0 + \delta_y)/2$. So $h$ is not admissible.

We deduce from this that the only way an e-variable $h$ can be admissible is if $h(0) = 1$. But then $h(x) \le 2 - h(0) = 1$ for all $x \in (0,1]$ so that, as a result, the only admissible e-variable is the trivial e-variable equal to one. Thus if $h(x) > 1$ for some $x \in (0,1]$ (e.g.\ $h(x) = 2$ and zero otherwise) then $h$ cannot be admissible, and it cannot be dominated by an admissible e-variable. Consequently, due to Lemma~\ref{L_minimal_complete_class}, no minimal complete class exists for this hypothesis.
\end{example}

In view of this example, it is remarkable that the hypotheses considered in Sections~\ref{S_fin_gen_hyp}--\ref{S_group_symmetry} do admit minimal complete classes under mild additional assumptions. (We conjecture that minimal complete classes exist even without these assumptions.)

\begin{theorem} \label{T_minimal_complete_class}
\begin{enumerate}
\item\label{T_minimal_complete_class_1} With the setup and notation of Section~\ref{S_fin_gen_hyp}, the set of nonnegative functions of the form $1 + \sum_{i=1}^d \pi_i g_i$, with $\pi \in \R^d_+$, is a complete class for the hypothesis generated by $g_1,\ldots,g_d$. If \eqref{C_max_evar_CQ} holds, this is a minimal complete class.

\item\label{T_minimal_complete_class_2}
With the setup and notation of Section~\ref{S_sub_psi}, let $\Ecal'$ be the set of functions of the form $\int_\Lambda h_\lambda(x) \pi(d\lambda)$ for $x \in \dom(\psi^*)$, where $h_\lambda(x) = e^{\lambda x - \psi(\lambda)}$ and $\pi \in \Mcal_1(\Lambda)$.\footnote{Recall that $\Lambda = [0,\lambda_\textnormal{max}]$ and that $h_{\lambda_\textnormal{max}}(x) = \lim_{\lambda \uparrow \lambda_\textnormal{max}} e^{\lambda x - \psi(\lambda)}$ for $x \in \Xcal$, where $\Xcal = \dom(\psi^*)$.}
Then $\Ecal'$ is a complete class for the one-sided sub-$\psi$ hypothesis.
Assume now that on $\R_+$, $\psi$ is the cumulant generating function of some zero-mean distribution $\bar\mu$ whose support has a cluster point in the interior of $\dom(\psi^*)$.
There are two cases depending on whether $\dom(\psi^*)$ contains its right endpoint:
\begin{enumerate}
\item[\textnormal{(1)}] if $\dom(\psi^*) = (-\infty, \bar x]$ with $\bar x \in [0,\infty)$, then $\Ecal'$ is a minimal complete class.
\item[\textnormal{(2)}] if $\dom(\psi^*) = (-\infty, \bar x)$ with $\bar x \in [0,\infty]$, then $\Ecal'_0$ is a minimal complete class, where $\Ecal'_0 \subset \Ecal'$ is the set of all functions of the form $\int_{\Lambda_0} h_\lambda(x) \pi(d\lambda)$ for $x \in \dom(\psi^*)$, with $\pi \in \Mcal_1(\Lambda_0)$ and $\Lambda_0 = [0, \lambda_\textnormal{max})$.
\end{enumerate}

\item\label{T_minimal_complete_class_3} With the setup and notation of Section~\ref{S_group_symmetry}, the set of e-variables of the form $1 + f - f_\pi$, with $f$ being $[-1,\infty]$-valued measurable and such that $f_\pi \le 0$, is a minimal complete class for the hypothesis of $\Sigma$-invariant distributions.
\end{enumerate}
\end{theorem}

\begin{proof}
\ref{T_minimal_complete_class_1}:
This follows directly from Corollary~\ref{C_evar_max_finite} (or Theorem~\ref{T_finite_constraint_set_reference_measure} in the setting with a reference measure) and Lemma~\ref{L_minimal_complete_class}.

\ref{T_minimal_complete_class_2}: Theorem~\ref{T_sub_psi_rep}\ref{T_sub_psi_rep_2} yields that $\Ecal'$ is a complete class. Assume now that $\psi$ is the cumulant generating function of some distribution $\bar \mu$ as in the statement.
We first consider case~(1). The minimality of $\Ecal'$ will follow from Lemma~\ref{L_minimal_complete_class} once we show that all e-variables in $\Ecal'$ are admissible.
To this end, pick $h \in \Ecal'$ specified by some $\pi \in \Mcal_1(\Lambda)$, meaning that $h(x) = \int_\Lambda h_\lambda(x) \pi(d\lambda)$ for $x \in \dom(\psi^*)$. Let $h'$ be an e-variable that dominates $h$.
By completeness, $h'$ is further dominated by some $h'' \in \Ecal'$, specified by some $\pi'' \in \Mcal_1(\Lambda)$.
We thus have $h'' - h \ge 0$ on $\dom(\psi^*)$. We claim that $\int_\Xcal (h'' - h) d\bar\mu = 0$. To see this, note that $\psi(\lambda)$ is the cumulant generating function of $\bar\mu$ at each $\lambda \in [0,\lambda_\textnormal{max})$, so that $\int_\Xcal h_\lambda(x) \bar\mu(dx) = 1$. 
Moreover, because $h_\lambda(x) \le h_\lambda(\bar x) \le e^{\psi^*(\bar x)} < \infty$ for $x \in \dom(\psi^*)$ and $\lambda \in [0,\lambda_\textnormal{max})$, the dominated convergence theorem yields $\int_\Xcal h_{\lambda_\textnormal{max}}(x) \bar\mu(dx) = 1$ as well. Thus by Tonelli's theorem,
\[
\int_\Xcal (h''(x) - h(x)) \bar\mu(dx) = \int_\Lambda \int_\Xcal h_\lambda(x) \bar\mu(dx) (\pi'' - \pi)(d\lambda) = \pi''(\Lambda) - \pi(\Lambda) = 0.
\]
Hence $h''(x) = h(x)$ for $\bar\mu$-a.e.\ $x$, and then, by continuity, for all $x \in \supp(\bar\mu)$. Since $h$ and $h''$ are real analytic in $(-\infty, \bar x)$, and since $\supp(\bar\mu)$ has a cluster point there, the identity theorem for real analytic functions (see e.g.\ \citet[Corollary~1.2.7]{MR1916029}) implies that $h'' = h$ on $(-\infty, \bar x)$ and then by continuity on $\dom(\psi^*)$. It follows that $h' = h$ on $\dom(\psi^*)$, showing that $h$ is admissible.

Consider now case~(2). Then $h_{\lambda_\textnormal{max}}(x) = 0$ for all $x \in \dom(\psi^*)$. Indeed, if $\lambda_\textnormal{max} < \infty$, this follows from the fact that $\psi(\lambda) \to \infty$ as $\lambda \uparrow \lambda_\textnormal{max}$ by virtue of being a cumulant generating function. If $\lambda_\textnormal{max} = \infty$, we pick $y \in (x, \bar x)$ and note that $h_\lambda(x) \le e^{-\lambda(y-x) + \psi^*(y)}$, which tends to zero as $\lambda \to \infty$. 
As a result, any $h \in \Ecal'$, specified by $\pi \in \Mcal_1(\Lambda)$, is dominated by $h' \in \Ecal'_0$ specified by $\pi' = \pi(\cdot \cap \Lambda_0) + (1-\pi(\{\lambda_{\textnormal{max}}\})) \delta_0 \in \Mcal_1(\Lambda_0)$. 
Thus $\Ecal'_0$ is a complete class. The proof of minimality works as in case~(1) on replacing $\Ecal$ by $\Ecal_0$ and $\Lambda$ by $\Lambda_0$. (Note that since $\Lambda_0$ does not contain $\lambda_\textnormal{max}$, there is no need to establish $\int_\Xcal h_{\lambda_\textnormal{max}}(x) \bar\mu(dx) = 1$, which does not hold in the setting of case~(2).)

\ref{T_minimal_complete_class_3}: This follows directly from Theorem~\ref{T_symmetric_rep} and Lemma~\ref{L_minimal_complete_class}, noting that exact e-variables (see Theorem~\ref{T_symmetric_rep}) are necessarily admissible.
\end{proof}

\section{Optimal e-variables} \label{S:optimal}

We now consider the problem of finding \emph{optimal} e-variables in the set $\Ecal$ of all e-variables for a given hypothesis $\Pcal$. 
We work with a general class of objective functions defined in terms of the following data:
\begin{itemize}
\item \emph{Alternative hypothesis}: any family $\Qcal \subset \Mcal_1$ of probability measures such that $\Qcal \ll \Pcal$, meaning that every $\Pcal$-negligible set is also $\Qcal$-negligible.

\item \emph{Utility function}: a nondecreasing upper semicontinuous function $U \colon [0,\infty) \to [-\infty, \infty)$ that is linearly bounded in the sense that there exist $a \in \R$ and $b > 0$ such that
\[
U(x) \le a + bx \text{ for all } x \in [0,\infty).
\]

\item \emph{Penalty function}: an arbitrary function $\chi \colon \Qcal \to \R$.
\end{itemize}
The objective function is defined in terms of these data as
\[
\U(h) = \inf_{\Q \in \Qcal} \left\{ \int_\Xcal U(h) d\Q - \chi(\Q)\right\}, \quad h \in \Ecal,
\]
with the convention that the integral is $-\infty$ whenever $\int_\Xcal U(h)^- d\Q = \infty$.
This covers several criteria appearing in the literature:

\begin{example}
\begin{enumerate}
\item With a simple hypothesis $\Qcal = \{\Q\}$, log-utility $U(x) = \log(x)$, and vanishing penalty function $\chi=0$, one 
maximizes \emph{e-power} \citep[Ch.~3]{ramdas2024hypothesis}, thus  obtaining (assuming the optimal value is finite) the log-optimal \emph{GRO e-variable} or \emph{numeraire} of \cite{grunwald2020safe,larsson2024numeraire}.

\item More generally, log-utility can be replaced with any proper concave function $U$. By concavity, such a function is automatically linearly bounded. Examples include the power utilities considered by \citet[Section~6]{larsson2024numeraire} and those used by \citet{koning2025continuoustestingunifyingtests}.

\item If, instead, we keep log-utility, let $\Qcal$ be composite, assume that both $\Pcal$ and $\Qcal$ admit a common reference measure, and maintain a zero penalty function, we obtain the \emph{GROW e-variable} problem of \cite{grunwald2020safe}. Of course, we can again work with general concave utility functions.

\item Still in the composite setting with reference measure and log-utility $U(x) = \log(x)$, allowing a bounded penalty function $\chi$ yields the \emph{REGROW e-variable} problem of \cite{grunwald2020safe}. The main example is $\chi(\Q) = \sup_{h \in \Ecal} \int_\Xcal U(h) d\Q$, the optimal e-power for the point alternative $\Q$. The objective function $\U(h)$ can then be interpreted as (the negative of) the worst-case \emph{regret} of using $h$ instead of the e-variable with optimal e-power for the particular alternative $\Q \in \Qcal$ that actually generated the data. Note that our setup does not require the penality function $\chi$ to be bounded.

\item Any \emph{continuous variational preference} in the sense of \cite{MR2268407} is represented by a utility $\U(h)$ of the above form, with $U$ affine and $\chi$ a nonpositive function which, when extended to take the value $-\infty$ outside $\Qcal$, is concave and upper semicontinuous with weakly compact super-level sets; see \cite[Sections~3.2 and~3.4]{MR2268407} for details.
\end{enumerate}
\end{example}

Because $U$ is nondecreasing, so is $\U$ in the sense that $\U(h') \ge \U(h)$ whenever $h' \ge h$, $\Pcal$-q.s.\ and hence $\Qcal$-q.s. For this reason it suffices to maximize $\U$ over a complete class of e-variables; see Definition~\ref{D_complete_class}.
Several of the results in this paper yield complete classes $\Ecal_0$ which are \emph{compactly parameterized} in the sense that there is a compact set $K_0$ and a continuous map $\pi \mapsto h(\pi)$ from $K_0$ onto $\Ecal_0$. 
The problem of maximizing $\U(h)$ over $\Ecal$ thus reduces to maximizing $\U(h(\pi))$ over $K_0$. 
Thanks to this reduction, the existence of an optimal e-variable boils down to the standard fact that an upper semicontinuous function on a compact set achieves its supremum. In contrast, maximizing $\U(h)$ directly over $\Ecal$ seems intractable in general, especially when $\Qcal$ is non-dominated or $U$ is non-concave. We elaborate below on the details of this reduction in two cases: finitely generated hypotheses and one-sided sub-$\psi$ hypotheses. After that we discuss uniqueness of the optimizer.

\paragraph{Finitely generated hypotheses.}
Consider a finite constraint set $\Phi = \{g_1,\ldots,g_d\}$ and let $\Pcal$ be the hypothesis it generates. We know from Theorem~\ref{T_minimal_complete_class}\ref{T_minimal_complete_class_1}, or directly from Theorem~\ref{T_finitely_generated}, that a complete class is given by all e-variables of the form
\[
h = 1 + \sum_{i=1}^d \pi_i g_i
\]
with $\pi \in \R^d_+$ belonging to the set $\Pi^\Phi$ defined in Corollary~\ref{C_evar_max_finite}. Thanks to Lemma~\ref{L_not_redundant} we may always restrict $\pi$ to the set $K$ in \eqref{eq_nonredundant_K}. Defining $K_0 = K \cap \Pi^\Phi$, the problem of maximizing $\U(h)$ over $\Ecal$ thus reduces to
\begin{equation} \label{eq_u_of_pi_finite}
\sup_{\pi \in K_0} u(\pi), \quad \text{where} \quad u(\pi) = \inf_{\Q \in \Qcal} \left\{ \int_\Xcal U\left(1 + \sum_{i=1}^d \pi_i g_i\right) d\Q - \chi(\Q) \right\}.
\end{equation}
The key advantage of this reduction is captured by the following lemma.

\begin{lemma} \label{L_K0_compact_finite}
The set $K_0 = K \cap \Pi^\Phi$ is compact.
\end{lemma}

\begin{proof}
Both $K$ and $\Pi^\Phi$ are closed, so it suffices to prove that $K_0$ is bounded. The argument is similar to the proof of Theorem~\ref{T_finitely_generated}. 
Suppose for contradiction that there is a sequence $(\pi^n)_{n \in \N}$ in $K_0$ with $\|\pi^n\| \to \infty$, where $\|\pi^n\| = \sum_{i=1}^d \pi^n_i$. 
After passing to a subsequence and using that $K_0$ is closed and contains $\pi^n / \|\pi^n\|$ whenever $\|\pi^n\| \ge 1$, we obtain $\pi^n/\|\pi^n\| \to \rho$ for some $\rho \in K_0$ with $\|\rho\|=1$. 
We then get $\sum_{i=1}^d \rho_i g_i = \lim_{n \to \infty} (1+\sum_{i=1}^d \pi^n_i g_i)/\|\pi^n\| \ge 0$, $\Pcal$-q.s. On the other hand, $\int_\Xcal \sum_{i=1}^d \rho_i g_i d\mu \le 0$ for all $\mu \in \Pcal$. Thus $\sum_{i=1}^d \rho_i g_i = 0$, $\Pcal$-q.s., contradicting the fact that $\rho \in K$. We deduce that $K_0$ is bounded.
\end{proof}

It is now straightforward to show existence of an optimal e-variable.

\begin{theorem} \label{T_opt_evar_finite}
Assume the constraint set is finite, $\Phi = \{g_1,\ldots,g_d\}$, and that
\[
\int_\Xcal \left( |g_1| + \cdots + |g_d| \right) d\Q < \infty \text{ for all } \Q \in \Qcal.
\]
Then there exists an e-variable $h^* = 1 + \sum_{i=1}^d \pi^*_i g_i$ with $\pi^* \in K_0$ that achieves the supremum $\sup_{h \in \Ecal} \U(h)$.
\end{theorem}

\begin{proof}
We need to find $\pi^* \in K_0$ that maximizes $u$ in \eqref{eq_u_of_pi_finite}. Since $K_0$ is compact by Lemma~\ref{L_K0_compact_finite}, it suffices to show that $u$ is upper semicontinuous. Set $c = \sup\{\|\pi\| \colon \pi \in K_0\}$, which is finite since $K_0$ is compact, and note that $U(1 + \sum_{i=1}^d \pi_i g_i) \le a + b + b c (|g_1| + \cdots |g_d|)$ for any $\pi \in K_0$. Since the upper bound is integrable under any fixed $\Q \in \Qcal$, it follows from Fatou's lemma and upper semicontinuity of $U$ that the map $\pi \mapsto \int_\Xcal U(1 + \sum_{i=1}^d \pi_i g_i) d\Q - \chi(\Q)$ from $K_0$ to $[-\infty,\infty]$ is upper semicontinuous.
Thus $u$ is upper semicontinuous too, being the pointwise infimum of a family of upper semicontinuous functions.
\end{proof}

\paragraph{One-sided sub-$\psi$ hypotheses.}
We now consider the hypothesis $\Pcal$ associated with a function $\psi$ as in Section~\ref{S_sub_psi}.
We then get from Theorem~\ref{T_minimal_complete_class}\ref{T_minimal_complete_class_2}, or directly from Theorem~\ref{T_sub_psi_rep}\ref{T_sub_psi_rep_2}, that a complete class is given by all e-variables of the form
\[
h(x) = \int_\Lambda e^{\lambda x - \psi(\lambda)} \pi(d\lambda), \quad x \in \dom(\psi^*),
\]
with $\pi \in \Mcal_1(\Lambda)$. The problem of maximizing $\U(h)$ over $\Ecal$ thus reduces to maximizing $u(\pi)$ over $\Mcal_1(\Lambda)$, where
\begin{equation} \label{eq_u_of_pi_sub_psi}
u(\pi) = \inf_{\Q \in \Qcal} \left\{ \int_\Xcal U\left(\int_\Lambda e^{\lambda x - \psi(\lambda)} \pi(d\lambda)\right) d\Q - \chi(\Q) \right\}.
\end{equation}
Just as in the finitely generated case, this reduction immediately leads to a general existence result.

\begin{theorem} \label{T_optimal_sub_psi}
Assume $\Pcal$ is the one-sided sub-$\psi$ hypothesis associated with a function $\psi$ as in Section~\ref{S_sub_psi}, and that
\[
\int_\Xcal e^{\psi^*} d\Q < \infty \text{ for all } \Q \in \Qcal.
\]
Then there exists an e-variable $h^*(x) = \int_\Lambda e^{\lambda x - \psi(\lambda)} \pi^*(d\lambda)$ with $\pi^* \in \Mcal_1(\Lambda)$ that achieves the supremum $\sup_{h \in \Ecal} \U(h)$.
\end{theorem}

\begin{proof}
Since $\Lambda = [0, \lambda_\textnormal{max}]$ is compact, so is $\Mcal_1(\Lambda)$ thanks to Prokhorov's theorem; see, e.g., \citet[Theorem~5.1]{bil_99}. Here $\Mcal_1(\Lambda)$ is equipped with the topology of weak convergence. Moreover, if a sequence $(\pi_n)_{n \in \N}$ in $\Mcal_1(\Lambda)$ converges weakly to a limit $\pi$, then the integrals $\int_\Lambda e^{\lambda x - \psi(\lambda)} \pi_n(d\lambda)$ converge to $\int_\Lambda e^{\lambda x - \psi(\lambda)} \pi(d\lambda)$ for every $x \in \dom(\psi^*)$. Since 
\[
U\left(\int_\Lambda e^{\lambda x - \psi(\lambda)} \pi(d\lambda)\right) \le a + b e^{\psi^*(x)}
\] for all $\pi \in \Mcal_1(\Lambda)$, and since the upper bound is integrable under any fixed $\Q \in \Qcal$, the same argument as in the proof of Theorem~\ref{T_opt_evar_finite} now yields upper semicontinuity of $u$ in \eqref{eq_u_of_pi_sub_psi}. It follows that an optimal e-variable $h^*$ of the stated form exists.
\end{proof}

Here is an interesting example where the optimal $h^*$ and $\pi^*$ in Theorem~\ref{T_optimal_sub_psi} are explicit.
We thank one of the referees for suggesting this example.

\smallskip\begin{example}   
Fix a nondegenerate mean-zero distribution $\mu_0$ on $\Xcal = \R$ with finite cumulant generating function $\psi(\lambda)$, $\lambda \in [0,\infty)$.
We let the null hypothesis $\Pcal$ be the one-sided sub-$\psi$ hypothesis.
Next, the exponential family generated by $\mu_0$ consists of the distributions $\mu_\lambda(dx) = \exp(\lambda x - \psi(\lambda)) \mu_0(dx)$, parameterized by $\lambda \in [0,\infty)$.
Fix some $\lambda^* > 0$ and consider the simple alternative $\Q^* = \mu_{\lambda^*}$. Note that $\Q^*$ is not sub-$\psi$, because for $\lambda > 0$,
\[
\int_\Xcal e^{\lambda x - \psi(\lambda)} \Q^*(dx) = \int_\Xcal e^{(\lambda + \lambda^*) x - \psi(\lambda) - \psi(\lambda^*)} \mu_0(dx) = \e^{\psi(\lambda + \lambda^*) - \psi(\lambda) - \psi(\lambda^*)} > 1
\]
by super-additivity of $\psi$ for positive arguments.
Now, because the function $h^*(x) = \exp(\lambda^* x - \psi(\lambda^*))$ is an e-variable by definition of $\Pcal$ and, also, the likelihood ratio between $\Q^*$ and an element $\mu_0$ of $\Pcal$, $h^*$ is actually the numeraire, and as such optimal for the log-utility $U(x) = \log(x)$; see \citet[Corollary~2]{grunwald2020safe} or \citet[Theorem~4.1]{larsson2024numeraire}.
This identifies $h^*$ and $\pi^* = \delta_{\lambda^*}$ in this case.

Remarkably, the same $h^*$ remains optimal in the sense of GROW if the above simple alternative is replaced by the composite alternative $\Qcal = \{\Q \colon \int_\Xcal x^- \Q(dx) < \infty \text{ and } \int_\Xcal x \Q(dx) \ge m^*\}$, where $m^* = \int_\Xcal x \Q^*(dx) = \psi'(\lambda^*)$. In words, $\Qcal$ consists of all distributions with integrable left tail whose (possibly infinite) mean is at least as large as the mean of $\Q^*$. Because $\psi'$ is increasing, $\Qcal$ contains in particular all members $\mu_\lambda$ of the exponential family for which $\lambda \ge \lambda^*$.
To see that $h^*$ is GROW optimal, first note that any $\Q \in \Qcal$ satisfies
\[
\int_\Xcal \log(h^*(x)) \Q(dx) = \lambda^* \int_\Xcal x \Q(dx) - \psi(\lambda^*) \ge \lambda^* m^* - \psi(\lambda^*) = \int_\Xcal \log(h^*(x)) \Q^*(dx).
\]
On the other hand, since $h^*$ is log-optimal for the simple alternative $\{\Q^*\}$, the right-hand side dominates $\int_\Xcal \log( h(x)) \Q^*(dx)$ for any other e-variable $h$.
This yields the saddle-point property
\[
\int_\Xcal \log(h^*) d\Q \ge \int_\Xcal \log(h^*) d\Q^* \ge \int_\Xcal \log(h) d\Q^*
\]
for all $\Q \in \Qcal$ and $h \in \Ecal$. The claim that $h^*$ is GROW follows.
\end{example}

\paragraph{Uniqueness.}
Under suitable strict concavity conditions one expects that the e-variable attaining the supremum is unique. However, since the objective value $\U(h)$ is unchanged if $h$ is replaced by a $\Qcal$-version, one can only expect uniqueness up to $\Qcal$-negligible sets. In what follows, uniqueness will always be understood in this sense. It is clear that if $U$ is concave, then so is $\U$. We say that $\U$ is \emph{strictly concave} if $\U(th + (1-t)h') > t\U(h) + (1-t)\U(h')$ holds for all $t \in (0,1)$ and all $h,h' \in \Ecal$ that are not $\Qcal$-q.s.\ equal. Strict concavity of $\U$ implies uniqueness of the optimal e-variable. However, in the general composite case, strict concavity of $\U$ does not follow from strict concavity of $U$. We now present a simple sufficient condition for uniqueness in terms of $U$ and $\Qcal$.

\begin{theorem} \label{T_optimal_uniqueness}
Assume that $U$ is concave, $\sup_{h \in \Ecal} \U(h) \in \R$, and the following condition holds:
\[
\text{if $h_1,h_2 \in \Ecal$ and } \inf_{\Q \in \Qcal} \int_\Xcal \left( U\left(\frac{h_1 + h_2}{2}\right) - \frac{U(h_1)}{2} - \frac{U(h_2)}{2} \right) d\Q = 0, \text{ then $h_1 = h_2$, $\Qcal$-q.s.}
\]
Then the optimal e-variable is unique. The above condition holds, in particular, if $U$ is strictly concave and $\Qcal$ is contained in the convex hull of finitely many mutually absolutely continuous probability measures. 
\end{theorem}

\begin{proof}
Let $h_1, h_2 \in \Ecal$ be any optimal e-variables, and set $\overline h = (h_1 + h_2)/2$. We then have
\begin{align*}
\U(\overline h) &= \inf_{\Q \in \Qcal} \left\{ \int_\Xcal U(\overline h) d\Q - \chi(\Q) \right\} \\
&= \inf_{\Q \in \Qcal} \bigg\{ \frac12 \left( \int_\Xcal U(h_1) d\Q - \chi(\Q) \right) + \frac12 \left( \int_\Xcal U(h_2) d\Q - \chi(\Q) \right) \\
&\qquad\qquad + \int_\Xcal \left(U(\overline h) - \frac12 U(h_1) - \frac12 U(h_2)\right) d\Q \bigg\} \\
&\ge \frac12\U(h_1) + \frac12\U(h_2) + \inf_{\Q \in \Qcal} \int_\Xcal \left(U(\overline h) - \frac12 U(h_1) - \frac12 U(h_2)\right) d\Q.
\end{align*}
The first line is just the definition of $\U(\overline h)$.
To justify the second line, note that concavity of $U$ yields $U(\overline h) - U(h_1)/2 - U(h_2)/2 \ge 0$, and that the negative parts $U(h_i)^-$, $i=1,2$, are integrable under every $\Q \in \Qcal$ since $\sup_{h \in \Ecal} \U(h) > -\infty$. The last line follows from the definition of $\U(h_i)$, $i=1,2$. We deduce that $\overline h$ is optimal, and then, since the optimal value is finite, that
\[
\inf_{\Q \in \Qcal} \int_\Xcal \left( U(\overline h) - \frac12 U(h_1) - \frac12 U(h_2)\right) d\Q = 0.
\]
The assumption of the theorem now yields $h_1=h_2$, $\Qcal$-q.s., showing uniqueness.
\end{proof}

\section{Hypotheses expressed as unions} \label{S_unions}

Some hypotheses of interest are naturally expressed as unions of simpler hypotheses. This happens, for instance, if one imposes an upper bound on the \emph{conditional value-at-risk}; see Example~\ref{ex_es_bound} below. Hypotheses of this form are not generated by constraints in a natural way, but our results can still sometimes be used to describe their e-variables. This is a consequence of the following simple but very general result.

\begin{lemma} \label{L_union_hypotheses}
Consider an arbitrary collection of hypotheses $\Pcal_\theta$, $\theta \in \Theta$, and let $\Ecal_\theta$ be the set of e-variables for $\Pcal_\theta$. Define the union $\Pcal = \bigcup_{\theta \in \Theta} \Pcal_\theta$, and let $\Ecal$ be the associated set of e-variables. Then $\Ecal = \bigcap_{\theta \in \Theta} \Ecal_\theta$.
\end{lemma}

\begin{proof}
A measurable function $h \colon \Xcal \to [0,\infty]$ belongs to $\bigcap_{\theta \in \Theta} \Ecal_\theta$ if and only if $\int_\Xcal h d\mu \le 1$ for all $\mu \in \Pcal_\theta$ and all $\theta \in \Theta$. But this just says that $f \in \Ecal$.
\end{proof}

It is worth mentioning that there is no analog of Lemma~\ref{L_union_hypotheses} allowing to reconstruct the set $\Ecal$ of e-variables for an \emph{intersection} $\Pcal = \bigcap_{\theta \in \Theta} \Pcal_\theta$ from the individual sets $\Ecal_\theta$. Such an analog would be of interest, because any hypothesis generated by a constraint set $\Phi$ can be expressed as the intersection $\bigcap_{f \in \Phi} \Pcal_f$, where $\Pcal_f$ is the hypothesis generated by the single function $f$. To see the obstruction, consider the constraint function $f(x) = x$. We know from Example~\ref{ex_unbounded_mean} that $\Ecal_{f}$ is trivial, and thus contains no information about the constraint function that generated it.
It follows that there is no way to use $\Ecal_f$ and, say, $\Ecal_g$ where $g(x) = x^2 - 1$, to construct the set of e-variables for the hypothesis generated by $f$ \emph{and} $g$, whose e-variables are characterized in Example~\ref{ex:250302}.

We now illustrate how Lemma~\ref{L_union_hypotheses} can be used by examining a hypothesis involving a bound on conditional value-at-risk (CVaR). Interestingly, this example shows that e-variables can be used to test CVaR without imposing any moment bounds as is done, for instance, by \cite{NEURIPS2021_d69c7ebb}. Backtesting of CVaR using e-variables has been studied by \citet{wang2024ebacktesting}. The representation derived below extends the one in their Theorem~5 by removing the restriction to e-variables that are increasing in $x$ imposed in that theorem.

\begin{example} \label{ex_es_bound}
Let $\Xcal = \R$. 
For any $\alpha \in (0,1)$,
the \emph{conditional value-at-risk} at level $\alpha$ of a distribution $\mu$ is given by
\begin{equation} \label{eq_es_rep}
\textnormal{CVaR}_\alpha(\mu) = \min_{\theta \in \R} \left\{\theta + \frac{1}{1-\alpha}\int_\Xcal (x-\theta)^+ \mu(dx) \right\}.
\end{equation}
This formula can be interpreted in terms of the \emph{value-at-risk} of $\mu$ at level $\alpha$, which is the quantile $\textnormal{VaR}_\alpha(\mu) = \min\{x \in \R \colon \mu((-\infty,x]) \ge \alpha\}$. Assuming that $\mu$ does not have an atom at $\textnormal{VaR}_\alpha(\mu)$, one has
$\textnormal{CVaR}_\alpha(\mu) = \E[X \mid X \ge \textnormal{VaR}_\alpha(\mu)]$ in probabilistic notation, where $X$ is a random variable with distribution $\mu$. A slightly more involved formula holds in the general case. We refer to \citet[Definition~3]{ROCKAFELLAR20021443} for details.

For a given level $\alpha \in (0,1)$ and threshold $c \in \R$, we consider the hypothesis that the conditional value-at-risk at level $\alpha$ does not exceed $c$, i.e.,
\[
\Pcal = \{\mu \in \Mcal_1 \colon \textnormal{CVaR}_\alpha(\mu) \le c\}.
\]
In view of \eqref{eq_es_rep}, $\Pcal$ is the union of the hypotheses $\Pcal_\theta$, each generated by the single constraint function $\theta - c + (x-\theta)^+/(1-\alpha)$. 
For $\theta > c$, this function is strictly positive, so $\Pcal_\theta$ is empty and can be excluded from the union. We thus restrict to $\theta \le c$. 
For $\theta = c$, $\Pcal_\theta$ consists of all distributions on $(-\infty,c]$, and admits the single admissible e-variable $1 + \infty \bm1_{(c,\infty)}(x)$. 
For $\theta < c$, we get from Corollary~\ref{C_evar_max_finite} that every e-variable for $\Pcal_\theta$ is pointwise dominated by $1 + \pi_\theta (\theta - c + (x-\theta)^+/(1-\alpha))$ for some $\pi_\theta \in [0, (c-\theta)^{-1}]$. 
We then deduce from Lemma~\ref{L_union_hypotheses} that every e-variable for $\Pcal$ is dominated by a function of the form
\[
h(x) = 1 + \inf_{\theta < c} \pi_\theta \left(\theta - c + \frac{(x-\theta)^+}{1-\alpha}\right) \wedge \left( \infty \bm1_{(c,\infty)}(x) \right),
\]
for some $\pi_\theta \in [0, (c-\theta)^{-1}]$. 
Here the only role played by the term $\infty \bm1_{(c,\infty)}(x)$ is to ensure that $h(c) = 1$. 
Indeed, for $x < c$ the infimum is negative, and for $x > c$ it is, of course, finite. Thus the minimum with $\infty \bm1_{(c,\infty)}(x)$ has no effect in these cases. 
But it does have an effect for $x = c$, since $h(c) = 1 + \frac{\alpha}{1-\alpha} \inf_{\theta < c} \pi_\theta (c-\theta) \wedge 0 = 1$.
It is natural to reparameterize in terms of $w_\theta = \pi_\theta (c-\theta)$, and we finally arrive at the form
\begin{equation} \label{eq_h_cvar}
h(x) = 
\begin{cases}
\displaystyle 1 + \inf_{\theta < c} w_\theta \left(\frac{1}{1-\alpha}\left(\frac{x-\theta}{c - \theta}\right)^+ - 1\right), & x \ne c,\\
1, & x = c,
\end{cases}
\end{equation}
where $w_\theta \in [0,1]$.
Since the pointwise infimum of a family of continuous functions is upper semicontinuous and hence measurable, any $h$ of the form \eqref{eq_h_cvar} is measurable.
As a result, $h$ is itself an e-variable for $\Pcal$ and, therefore, the functions of this form constitute a complete class for $\Pcal$ in the sense of Definition~\ref{D_complete_class}.
\end{example}

It is remarkable that CVaR, unlike the mean, can be tested without any a priori moment bounds. Indeed, taking $c=0$ for simplicity, we obtain a nontrivial e-variable by setting $w_\theta = 1$ for all $\theta < 0$ in \eqref{eq_h_cvar}, namely
\[
h(x) = \begin{cases}
0, & x < 0, \\
1, & x = 0,\\
(1-\alpha)^{-1}, & x > 0.
\end{cases}
\]
Although $\alpha$ is required to lie in $(0,1)$ above, it is worth noting that with $\alpha=0$, CVaR formally reduces to the mean. In this case any choice of $w_\theta \in [0,1]$ in \eqref{eq_h_cvar} yields a trivial e-variable $h(x) \le 1$, which is consistent with Example~\ref{ex_unbounded_mean}.

\section{Hypotheses with relaxed integrability}
\label{S_relaxed_integrability}

We now elaborate on the integrability requirement in the definition of $\Pcal$ in \eqref{eq_hypothesis_integrable}. To illustrate the issue, consider a single constraint function $f_0$ which is bounded above but unbounded below. We can then find $\mu \in \Mcal_1$ such that $\int_\Xcal f_0 d\mu$ is well-defined but equal to $-\infty$. \emph{This measure does not qualify for membership in $\Pcal$.} More generally, given a general constraint set $\Phi$, it is natural to consider the larger, relaxed, hypothesis
\begin{equation} \label{eq_relaxed_hypothesis}
\widetilde\Pcal = \left\{\mu \in \Mcal_1 \colon \int_\Xcal f^+ d\mu < \infty \text{ and } \int_\Xcal f d\mu \in [-\infty,0] \text{ for all } f \in \Phi\right\}.
\end{equation}
What is the set $\widetilde\Ecal$ of e-variables for $\widetilde\Pcal$? It is certainly included in $\Ecal$, but can the two be different? Relatedly, suppose we start with a constraint set $\Phi$ and include a single additional \emph{negative} function $f_0$ to form $\Phi_0 = \Phi \cup \{f_0\}$. How, if at all, does this affect the hypotheses and their sets of e-variables? Intuitively one might expect that including a negative constraint function would not change the hypothesis. In the following discussion we indicate the constraint sets explicitly by writing $\Pcal(\Phi)$, $\Pcal(\Phi_0)$, $\widetilde\Pcal(\Phi)$, and $\widetilde\Pcal(\Phi_0)$ for the hypotheses, and $\Ecal(\Phi)$, etc., for the corresponding sets of e-variables.

One may wonder why the theory presented so far does not already operate with the relaxed definition in \eqref{eq_relaxed_hypothesis}. The reason is that $\widetilde\Pcal$ is in general not a subset of a vector space like $\Mcal^\Phi$. This renders the machinery of topological vector spaces and their duality theory inapplicable. However, once developed, our theory can be used to draw conclusions about the more general objects $\widetilde\Pcal$ and $\widetilde\Ecal$, as we now discuss.

It is clear that the relaxed hypotheses $\widetilde\Pcal(\Phi)$ and $\widetilde\Pcal(\Phi_0)$ are equal, and so are their sets of e-variables. However, as the following example shows, $\Pcal(\Phi)$ can differ from $\Pcal(\Phi_0)$, and their sets of e-variables can also be different.  Further, it shows $\widetilde\Ecal(\Phi_0) \neq \Ecal(\Phi_0)$ in general.

\begin{example}
Let $\Xcal = \N$. Consider the constraint set $\Phi$ consisting of the functions $f_n$, $n \in \N$, defined by $f_n(x) = 1$ for $x \ne n$ and $f_n(n) = 1-2^n$. Let $\Phi_0 = \Phi \cup \{f_0\}$, where the negative function $f_0$ is given by $f_0(x) = -2^x$. For any $\mu = \sum_{x \in \N} p_x \delta_x$ in the hypothesis $\Pcal(\Phi)$, the condition $\int_\N f_n d\mu \le 0$ translates to the inequality $1-p_n + p_n(1-2^n) \le 0$, or $p_n \ge 2^{-n}$. It follows that $\Pcal(\Phi)$ consists of the single measure $\mu = \sum_{x \in \N} 2^{-x}\delta_x$, and that $\Ecal(\Phi)$ consists of all nonnegative functions $h$ such that $\sum_{x \in \N} h(x) 2^{-x} \le 1$. However, $\int_\N f_0 d\mu = - \infty$, so $\mu\notin \Pcal(\Phi_0)$. Thus $\Pcal(\Phi_0) = \emptyset$, and $\Ecal(\Phi_0)$ consists of \emph{all} nonnegative functions on $\N$. In contrast, the relaxed hypotheses $\widetilde\Pcal(\Phi)$ and $\widetilde\Pcal(\Phi_0)$ both coincide with $\Pcal(\Phi)$, and their (common) set  of e-variables 
$\widetilde\Ecal(\Phi) = \widetilde\Ecal(\Phi_0) = \Ecal(\Phi)$.  Moreover, because these sets differ from $\Ecal(\Phi_0)$, we see that the relaxed definition \eqref{eq_relaxed_hypothesis} can indeed produce a strictly smaller set of e-variables.
\end{example}

In this example the constraint sets are infinite. We now give a positive result showing that with a finite constraint set, the above issue cannot occur. It is an interesting open problem to characterize $\widetilde\Ecal(\Phi)$ for a general infinite constraint set $\Phi$.

\begin{theorem} \label{T_Pcal_tilde_finite}
Consider a finite nonempty constraint set $\Phi = \{g_1,\ldots,g_d\}$. Let $\Pcal$ be the hypothesis it generates, and let $\widetilde\Pcal$ be the relaxed hypothesis defined in \eqref{eq_relaxed_hypothesis}. Then the set $\widetilde\Ecal$ of e-variables for $\widetilde\Pcal$ coincides with the set $\Ecal$ of e-variables for $\Pcal$.
\end{theorem}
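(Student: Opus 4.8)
The plan is to prove the two inclusions $\widetilde\Ecal \subseteq \Ecal$ and $\Ecal \subseteq \widetilde\Ecal$. The first is immediate since $\Pcal \subseteq \widetilde\Pcal$. For the second I would first reduce to a purely measure-theoretic claim: \emph{for a finite constraint set, every $\Pcal$-negligible set is $\widetilde\Pcal$-negligible}. Granting this, let $h \in \Ecal$. By Theorem~\ref{T_finitely_generated} there is $\pi \in \R^d_+$ with $h \le 1 + \sum_{i=1}^d \pi_i g_i$ $\Pcal$-q.s.; the exceptional set is then $\Pcal$-negligible, hence $\widetilde\Pcal$-negligible, so the inequality also holds $\mu$-a.e.\ for every $\mu \in \widetilde\Pcal$. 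Since $\bigl(1 + \sum_i \pi_i g_i\bigr)^+ \le 1 + \sum_i \pi_i g_i^+$ is $\mu$-integrable (as $\int_\Xcal g_i^+\,d\mu < \infty$) and each $\int_\Xcal g_i\,d\mu \in [-\infty,0]$ with $\pi_i \ge 0$, integrating gives $\int_\Xcal h\,d\mu \le 1 + \sum_i \pi_i \int_\Xcal g_i\,d\mu \le 1$, i.e.\ $h \in \widetilde\Ecal$. (The reduction is in fact an equivalence: if a $\Pcal$-negligible set $N$ had $\mu(N) > 0$ for some $\mu \in \widetilde\Pcal$, then $c\bm1_N$ would be an e-variable for $\Pcal$ for every $c>0$ but not for $\widetilde\Pcal$ once $c > 1/\mu(N)$, so $\Ecal \neq \widetilde\Ecal$.)

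To prove the negligibility claim, fix $\mu \in \widetilde\Pcal$ and a measurable $N$ with $\mu(N) > 0$; the goal is a probability measure $\mu' \in \Pcal$ with $\mu'(N) > 0$. Set $J = \{i : \int_\Xcal g_i\,d\mu = -\infty\}$; if $J = \emptyset$ then $\mu \in \Pcal$ and we are done, so assume $J \neq \emptyset$ and write $\{1,\dots,d\} = J \cup R \cup S$ with $\int_\Xcal g_i\,d\mu < 0$ for $i \in R$ and $\int_\Xcal g_i\,d\mu = 0$ for $i \in S$. Let $B_n = \{x : \min_i g_i(x) \ge -n\}$, an increasing family with union $\Xcal$. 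On $B_n$ each $g_i$ is bounded below, so any probability measure $u\,\bm1_{B_n}\mu / Z$ with $u$ bounded automatically integrates every $g_i$ (also using $\int_\Xcal g_i^+\,d\mu < \infty$); hence it is enough to find a bounded, $\mu$-a.e.\ strictly positive $u$ on $B_n$ with $\int_{B_n} g_i u\,d\mu \le 0$ for all $i$. Indeed such a $\mu' = u\,\bm1_{B_n}\mu/Z$ is equivalent to $\mu|_{B_n}$, so $\mu'(N) \ge \mu'(N \cap B_n) > 0$ once $n$ is large enough that $\mu(N \cap B_n) > 0$.

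Only the finitely many ``tight'' indices $i \in S$ require care, since $\int_{B_n} g_i\,d\mu \to -\infty$ for $i \in J$, $\to \int_\Xcal g_i\,d\mu < 0$ for $i \in R$, but $\to 0$ for $i \in S$. I would take $u = 1 + v$, where $v \in L^\infty(B_n,\mu)$ is a correction chosen so that $\int_{B_n} g_i v\,d\mu = -\int_{B_n} g_i\,d\mu$ for every $i \in S$, making all $S$-constraints vanish exactly. Such $v$ exists because the target $\bigl(-\int_{B_n} g_i\,d\mu\bigr)_{i \in S}$ lies in the range of the moment map $v \mapsto \bigl(\int_{B_n} g_i v\,d\mu\bigr)_{i \in S}$: its only obstruction would be some $\beta$ with $\sum_{i\in S}\beta_i g_i = 0$ $\mu$-a.e.\ on $B_n$, but any such $\beta$ also kills the target since $\sum_{i\in S}\beta_i\int_{B_n} g_i\,d\mu = 0$. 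Furthermore $\|v\|_\infty$ can be made arbitrarily small for $n$ large: the target tends to $0$, and the solvability constant of the moment map is finite and non-increasing in $n$ once its range stabilises (a solution on $B_n$ extends by zero to a solution on $B_m$, $m \ge n$). For $\|v\|_\infty < 1$ the function $u = 1 + v$ is bounded and $\mu$-a.e.\ positive; for $i \in S$ we have $\int_{B_n} g_i u\,d\mu = 0$ by construction, and for $i \in J \cup R$ a short estimate (bounding $\int_{B_n}|g_i|\,d\mu$ by $2\int_\Xcal g_i^+\,d\mu + |\int_{B_n} g_i\,d\mu|$) gives $\int_{B_n} g_i u\,d\mu \le -(1-\|v\|_\infty)\bigl|\int_{B_n} g_i\,d\mu\bigr| + 2\|v\|_\infty\int_\Xcal g_i^+\,d\mu < 0$ for $n$ large. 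This produces the required $\mu' \in \Pcal$ charging $N$.

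The main obstacle is exactly the handling of the tight constraints: the naive candidate $\mu|_{B_n}/\mu(B_n)$ need \emph{not} lie in $\Pcal$, because restricting $\mu$ to $B_n$ can push a tight constraint strictly above zero with no control on the sign of the drift, so the $L^\infty$-perturbation $v$ — together with the uniform bound on $\|v\|_\infty$ coming from the monotonicity of the solvability constant in $n$ — is the real content of the argument. This mechanism fails for infinitely many constraints, which is why that case is left open.
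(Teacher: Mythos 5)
Your proof is correct, but it takes a genuinely different route from the paper's. The paper never invokes Theorem~\ref{T_finitely_generated}: it fixes a bounded $h \in \Ecal$ and a measure $\mu \in \widetilde\Pcal$, truncates the badly behaved constraints to $g_i \vee c$, and then applies Lemma~\ref{L_barycenter} to produce a \emph{finitely supported} probability measure $\nu \in \Pcal$ that matches the $\mu$-moments of $h$ and of the (possibly truncated) constraints, so that $\int_\Xcal h\,d\mu = \int_\Xcal h\,d\nu \le 1$; the only analytic input is the supporting-hyperplane argument in Lemma~\ref{L_barycenter}. You instead reduce the theorem to the statement that every $\Pcal$-negligible set is $\widetilde\Pcal$-negligible --- and you correctly observe that this is \emph{equivalent} to the theorem, which is a useful structural insight --- then invoke the representation in Theorem~\ref{T_finitely_generated}, and prove the negligibility claim by a truncation-plus-$L^\infty$-perturbation construction. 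Each route buys something: the paper's is shorter, self-contained, and isolates a barycenter lemma of independent interest; yours exposes what the theorem is really about at the level of null sets, at the cost of a more technical argument (in particular the uniform solvability constant of the finite moment map across truncation levels $B_n$, which rests on the open mapping theorem and the eventual stabilization of its range). One bookkeeping point worth spelling out in your reduction step: $h \ge 0$ together with $h \le 1 + \sum_i \pi_i g_i$ $\mu$-a.e.\ forces $1 + \sum_i \pi_i g_i \ge 0$ $\mu$-a.e., which is what makes $\int_\Xcal\bigl(1+\sum_i\pi_i g_i\bigr)d\mu$ finite and equal to $1 + \sum_i \pi_i \int_\Xcal g_i\,d\mu$; without this remark the displayed inequality risks comparing a nonnegative quantity with $-\infty$.
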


The proof relies on the following lemma.

\begin{lemma} \label{L_barycenter}
Let $\mu \in \Mcal_1$ and let $f_1,\ldots,f_m$ be measurable and $\mu$-integrable. Then there is a finitely supported probability measure $\nu$ such that $\int_\Xcal f_i d\nu = \int_\Xcal f_i d\mu$ for $i=1,\ldots,m$.
\end{lemma}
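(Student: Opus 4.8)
The plan is to realize $\mu$ as a barycenter of Dirac masses in $\R^m$ via the map $x \mapsto (f_1(x),\ldots,f_m(x))$, and then apply Carathéodory's theorem. First I would define $F \colon \Xcal \to \R^m$ by $F(x) = (f_1(x),\ldots,f_m(x))$ and let $\mu_F = F_*\mu$ be the pushforward, a probability measure on $\R^m$. The integrability hypothesis says that the identity map on $\R^m$ is $\mu_F$-integrable, i.e.\ $\mu_F$ has a well-defined barycenter $b = \int_{\R^m} y \, \mu_F(dy) \in \R^m$, and by construction $b_i = \int_\Xcal f_i \, d\mu$ for each $i$. The point $b$ lies in the convex hull of the support of $\mu_F$, hence in the convex hull of $\R^m$ itself; more to the point, $b$ is a barycenter of $\mu_F$, so it belongs to the closed convex hull of $\supp(\mu_F)$, and in fact (since any barycenter of a probability measure on $\R^m$ lies in the convex hull of the measure's support when that hull is considered appropriately) we can express $b$ as a finite convex combination.

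The cleanest route is: a barycenter of a probability measure on $\R^m$ lies in the convex hull of its support (this is standard — e.g.\ one shows $b$ cannot be separated from $\conv(\supp \mu_F)$ by a hyperplane, since a separating functional would have strictly positive $\mu_F$-integral yet integrate to a value contradicting $\langle \cdot, b\rangle$). Then by Carathéodory's theorem, $b = \sum_{j=0}^{m} \alpha_j y_j$ for some points $y_j \in \supp(\mu_F) \subset \range(F)$ and weights $\alpha_j \ge 0$ summing to $1$. Pick $x_j \in \Xcal$ with $F(x_j) = y_j$ and set $\nu = \sum_{j=0}^{m} \alpha_j \delta_{x_j}$. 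Then $\int_\Xcal f_i \, d\nu = \sum_j \alpha_j f_i(x_j) = \sum_j \alpha_j (y_j)_i = b_i = \int_\Xcal f_i \, d\mu$ for each $i$, as required, and $\nu$ is finitely supported.

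The one point requiring a little care — and the main obstacle — is the claim that the barycenter $b$ actually lies in $\conv(\supp(\mu_F))$ rather than merely in its closure. If one only gets $b \in \cconv(\supp(\mu_F))$, then $b$ is a limit of finite convex combinations, which is not quite enough. To get membership in the (non-closed) convex hull, I would argue as follows: suppose $b \notin \conv(\supp(\mu_F))$. Since $\conv(\supp(\mu_F))$ is convex, there is a supporting/separating hyperplane, i.e.\ a vector $c \in \R^m$ with $\langle c, y \rangle \ge \langle c, b \rangle$ for all $y$ in the convex hull, with strict inequality somewhere on $\supp(\mu_F)$ — but a strict inequality on a point of the support forces it to hold on a set of positive $\mu_F$-measure, so integrating gives $\langle c, b\rangle = \int \langle c, y\rangle \, \mu_F(dy) > \langle c, b\rangle$, a contradiction. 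Hence $b \in \conv(\supp(\mu_F))$, and Carathéodory finishes the job. (Alternatively, one can avoid support-topology subtleties entirely by a direct induction on $m$: if $b$ already lies in an affine subspace of dimension $< m$ spanned by finitely many atoms one reduces the dimension; otherwise one slices $\mu_F$ by a hyperplane through $b$ and applies the inductive hypothesis to the two halves — but the separation argument above is shorter.)
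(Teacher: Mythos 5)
Your overall plan — push forward to $\R^m$, show the barycenter lies in a suitable convex hull, apply Carath\'eodory, and pull the extreme points back to $\Xcal$ — is the same as the paper's. And you correctly identify that one must get the barycenter into the convex hull itself, not merely its closure; your separating-hyperplane argument for that, while a bit loosely phrased (you need proper separation to guarantee strictness somewhere on the support), can be made rigorous.

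However, there is a genuine gap at the pull-back step. You write ``$y_j \in \supp(\mu_F) \subset \range(F)$'' and then pick $x_j$ with $F(x_j) = y_j$, but the inclusion $\supp(\mu_F) \subset \range(F)$ is \emph{false} in general: the support is a closed set and the range need not be. For example, take $\Xcal = (0,1)$ with Lebesgue measure and $F = \mathrm{id}$; then $\supp(\mu_F) = [0,1]$ while $\range(F) = (0,1)$, and the endpoints $0,1$ have no preimage. What is true is only $\supp(\mu_F) \subset \cl(\range(F))$. So the Carath\'eodory points you obtain might not have preimages in $\Xcal$, and the construction of $\nu$ breaks down. Showing $b \in \conv(\supp(\mu_F))$ is therefore not enough: you need $b \in \conv(\range(F))$, and these two hulls are not directly comparable.

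The paper sidesteps exactly this by working from the start with the set $C = \conv(\supp(\gamma) \cap \range(f))$, where $\gamma = f_*\mu$. The intersection has full $\gamma$-measure (the support has full measure, and $\gamma$ is concentrated on $\range(f)$ in the outer-measure sense), so $C$ is nonempty, and every point of it is by construction a finite convex combination of points that \emph{do} have preimages. The separation argument — really the same one you use — then shows $z_0 \in \ri(C) \subset \conv(\range(f))$, after which the pull-back is legitimate. So the fix is local: replace $\conv(\supp(\mu_F))$ by $\conv(\supp(\mu_F) \cap \range(F))$ throughout and check that the measure-theoretic step (strict inequality on a set of positive $\mu_F$-measure) still goes through with this smaller set, which it does because $\supp(\mu_F) \cap \range(F)$ is dense in $\supp(\mu_F)$ in the relevant $\gamma$-a.e.\ sense. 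As written, though, your proof relies on a false containment, and the obstacle you flag as ``the main one'' (closed vs.\ non-closed hull) is not the one that actually bites.
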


\begin{proof}
Define $f\colon \Xcal \to \R^m$ by $f(x) = (f_1(x),\ldots,f_m(x))$ and set $z_0 = \int_\Xcal f d\mu \in \R^m$. To prove the lemma it is enough to show that $z_0 \in \conv(\range(f))$, the convex hull of the range of $f$. To this end, consider the pushforward $\gamma = f_*\mu$ on $\R^m$ and define
\[
C = \conv(\supp(\gamma) \cap \range(f)).
\]
Suppose for contradiction that $z_0 \notin \ri(C)$, the relative interior of $C$. Then there is some nonzero $u \in \R^m$ such that
\begin{equation} \label{eq_sep_hyperplane_ri_C}
\text{$u \cdot (z-z_0) > 0$ for all $z \in \ri(C)$.}
\end{equation} 
This implies that $u \cdot (z-z_0) \ge 0$ for all $z \in C$, hence $\gamma$-a.e. Since also 
\[
    \int_{\R^m} u \cdot (z-z_0) \gamma(dz) = u \cdot \left(\int_{\R^m} z \gamma(dz) - z_0\right) = 0, 
\]
we deduce that $u \cdot (z-z_0) = 0$, $\gamma$-a.e. This shows that $\supp(\gamma)$ is contained in the set $\{z \in \R^m \colon u\cdot(z - z_0) = 0\}$, which is disjoint from $\ri(C)$ in view of \eqref{eq_sep_hyperplane_ri_C}. We have established that $C \cap \ri(C) = \emptyset$, which contradicts the fact that every nonempty convex set in $\R^m$ has nonempty relative interior. Thus $z_0 \in \ri(C) \subset \conv(\range(f))$.
\end{proof}

\begin{proof}[Proof of Theorem~\ref{T_Pcal_tilde_finite}]
Since $\Pcal \subset \widetilde\Pcal$, we always have $\widetilde\Ecal \subset \Ecal$. We therefore only have to prove the opposite inclusion, so we fix any $h \in \Ecal$. Assume first that $h$ is bounded. Consider any $\mu \in \widetilde\Pcal$ and let $c$ be a sufficiently large negative constant to ensure that $\int_\Xcal (g_i \vee c) d\mu \le 0$ for those $i \in \{1,\ldots,d\}$ such that $\int_\Xcal g_i^- d\mu = \infty$. Using Lemma~\ref{L_barycenter} we obtain a finitely supported probability measure $\nu$ such that
\begin{align}
\int_\Xcal h d\nu &= \int_\Xcal h d\mu, && \label{eq_mu_F_1} \\
\int_\Xcal g_i d\nu &= \int_\Xcal g_i d\mu, && i \in \{1,\ldots,d\} \text{ with } \int_\Xcal g_i^- d\mu < \infty, \label{eq_mu_F_2} \\
\int_\Xcal (g_i \vee c) d\nu &= \int_\Xcal (g_i \vee c) d\mu, && i \in \{1,\ldots,d\} \text{ with } \int_\Xcal g_i^- d\mu = \infty. \label{eq_mu_F_3}
\end{align}
Since $\nu$ is finitely supported, all $g_i$ are $\nu$-integrable. We thus have from \eqref{eq_mu_F_2} and \eqref{eq_mu_F_3} that $\nu \in \Pcal$. Then \eqref{eq_mu_F_1} and the fact that $h \in \Ecal$ yield $\int_\Xcal h d\mu \le 1$. Since $\mu \in \widetilde\Pcal$ was arbitrary, this shows that $h \in \widetilde\Ecal$. If $h \in \Ecal$ is unbounded, then by what we just proved, $h \wedge n \in \widetilde\Ecal$ for each $n \in \N$, and thus $h \in \widetilde\Ecal$ by monotone convergence. This completes the proof.
\end{proof}

\section{Summary and open problems} \label{S_summary}

In various applications, statistical hypotheses are naturally described through constraints on the data-generating distribution. In this paper, we use the duality theory for locally convex topological vector spaces, in particular the bipolar theorem, to give an abstract representation of all e-variables for hypotheses of this kind in a general setting. We then show how the abstract representation instantiates in several concrete cases. As such, our results facilitate the use of e-variables for testing hypotheses generated by constraints.

There are, nonetheless, many important and challenging open problems. One of them is to describe the set of e-variables for the i.i.d.\ $n$-sample hypothesis $\Pcal^{\otimes n}$ consisting of all product distributions $\mu^{\otimes n}$ on $\Xcal^n$ with $\mu \in \Pcal$, for some hypothesis $\Pcal$ of distributions on $\Xcal$. Any function of the form $h(x_1,\ldots,x_n) = \prod_{i=1}^n h_i(x_i)$ with $h_i \in \Ecal$ is an e-variable for $\Pcal^{\otimes n}$, as are all convex combinations of such functions, and their limits. However, not all e-variables arise in this way in general. For example, suppose $\Pcal$ is the set of \emph{all} distributions on $\R$, so that $\Pcal^n$ is the set of \emph{all} laws of real i.i.d.\ $n$-samples. Then any $h_i \in \Ecal$ satisfies $h_i \le 1$, and thus so do e-variables of the product form above, their convex combinations, etc. But, taking $n=2$ for illustration, the function $h(x_1,x_2) = 2\bm1_{\{x_1 < x_2\}} + \bm1_{\{x_1 = x_2\}}$ is an example of a nontrivial e-variable for $\Pcal^{\otimes n}$. Can all nontrivial e-variables be characterized?

Another interesting future direction is to extend our results to the sequential setting, where testing and inference based on e-variables have been proven to be particularly effective. Let us mention two specific problems whose solutions would be of interest. We formulate these in a probabilistic language, rather than the analytic language we otherwise use in this paper.

Suppose we observe data $X_t$, $t \in \N$, taking values in a measurable space $\Xcal$. We model $(X_t)_{t \in \N}$ as the canonical process on $\Xcal^\N$, and let $(\Fcal_t)_{t \in \N}$ with $\Fcal_t = \sigma(X_1,\ldots,X_t)$ be the filtration generated by the data. We let $\Fcal_0$ be the trivial $\sigma$-algebra. Consider now a finite constraint set $\Phi = \{g_1,\ldots,g_d\}$ of functions on $\Xcal$, which generates a family $\Pcal$ of distributions on $\Xcal$ as in Definition~\ref{D_constrained_hypotheses}. We may then examine those distributions $\P$ on $\Xcal^\N$ under which the conditional distributions of the data belong to $\Pcal$. That is,
\[
\P(X_t \in \cdot \mid \Fcal_{t-1}) \text{ belongs to } \Pcal \text{ for all } t \in \N, \text{ $\P$-a.s.},
\]
or, equivalently,
\[
\E_\P[g_i(X_t) \mid \Fcal_{t-1}] \le 0, \text{ $\P$-a.s., for all } i = 1,\ldots,d \text{ and all } t \in \N.
\]
Let $\overline\Pcal$ denote the collection of all such $\P$. Note that $\overline\Pcal$ is a family of distributions over the entire data sequence, whereas $\Pcal$ is a family of distributions over a single observation.

An \emph{e-process} for $\overline\Pcal$ is a nonnegative adapted process $(Y_t)_{t \in \N_0}$ (where $\N_0 = \N \cup \{0\}$) such that $\E_\P[Y_\tau] \le 1$ for all $\P \in \overline\Pcal$ and all finite stopping times $\tau$ for the given filtration. E-processes are the natural analog of e-variables in the sequential setting, and it is of interest to describe their structure. One can easily construct a large class of such e-processes by setting $Y_0 = 1$ and
\begin{equation} \label{eq_e_process}
Y_t = \prod_{s=1}^t \left(1 + \sum_{i=1}^d \pi_{i,s} g_i(X_s) \right), \quad t \in \N,
\end{equation}
where $\pi_t = (\pi_{1,t},\ldots,\pi_{d,t})$, $t \in \N$, is a predictable process taking values in the set $\Pi^\Phi$ defined in Corollary~\ref{C_evar_max_finite}. The predictability property means that $\pi_t$ is a measurable function of $X_1,\ldots,X_{t-1}$. Any process $(Y_t)_{t \in \N_0}$ as in \eqref{eq_e_process} above is actually a $\overline\Pcal$-supermartingale, i.e., a supermartingale under every $\P \in \overline\Pcal$. The e-process property then follows from the stopping theorem. It is now natural to ask whether \emph{every} admissible e-process for $\overline\Pcal$ is of the form \eqref{eq_e_process}, at least assuming the constraint qualification \eqref{C_max_evar_CQ}.  \citet{clerico2024optimality} provides a positive answer in the natural sequential extension of Example~\ref{eg:bounded-mean} for testing if a bounded conditional mean equals $m$. It would be of interest to settle the general case as well.

The second open problem concerns the smaller hypothesis $\Pcal^{\infty} \subset \overline\Pcal$ consisting of all i.i.d.\ distributions $\P = \mu^{\otimes\infty}$ on $\Xcal^\N$ with $\mu \in \Pcal$. This hypothesis is significantly smaller than $\overline\Pcal$, so its family of admissible e-processes is potentially larger. Can it be described explicitly? How, if at all, is it related to the family of e-processes for $\overline\Pcal$?

Lastly, one can of course formulate versions of these questions for infinite constraint sets $\Phi$ as well, and this also leads to interesting open problems.

\appendix

\section{Some results from topology and functional analysis}\label{S_functional_analysis}

\subsection{Closedness, compactness, and continuity using nets}\label{S_nets}

Let $X$ be a topological space. In many cases, for example if $X$ is a metric space, properties such as closedness, compactness, and continuity can be characterized using sequences. For example, a set $C \subset X$ is closed if and only if $C$ contains the limit of every convergent sequence $(x_n)_{n \in \N} \subset C$. In general topological spaces, this characterization may fail. However, it can be restored by replacing sequences with the more general concept of \emph{nets}, also known as \emph{Moore--Smith sequences}. We review the basic definitions and facts below, and refer the reader to \citet[Chapter~2]{MR370454}, \citet[Chapter~4]{MR264581}, and \citet[p.~32]{ali_bor_06} for more details.

Instead of only using the natural numbers $\N$ as index set, a net can be indexed by a general \emph{directed set} $A$. This is a nonempty set with a binary relation $\ge$ that is symmetric ($\alpha \ge \alpha$ for all $\alpha \in A$), transitive ($\alpha \ge \beta$ and $\beta \ge \gamma$ implies $\alpha \ge \gamma$ for all $\alpha,\beta,\gamma \in A$), and enjoys the directedness property that for any $\alpha,\beta \in A$ there is $\gamma \in A$ with $\gamma \ge \alpha$ and $\gamma \ge \beta$. The natural numbers with the standard ordering is an example of a directed set; another is the family of all neighborhoods $U$ of a given point $x \in X$, with $U \ge V$ if $U \subset V$. (Recall that a \emph{neighborhood} of a point $x \in X$ is any set that contains an open set containing $x$.) A \emph{net} is a map from some directed set $A$ to $X$, denoted by $(x_\alpha)_{\alpha \in A}$ in analogy with the notation for sequences. For brevity we often write $(x_\alpha)$ or even just $x_\alpha$ for the net $(x_\alpha)_{\alpha \in A}$.
The net \emph{converges} to a point $x \in X$ if it is eventually in any neighborhood of $x$; that is, if for any neighborhood $U$ of $x$, there is some $\alpha \in A$ such that $x_\beta \in U$ for all $\beta \in A$ with $\beta \ge \alpha$. We express this by saying that ``$x_\alpha$ converges to $x$'', or just ``$x_\alpha \to x$''. Lastly, a \emph{subnet} of $(x_\alpha)_{\alpha \in A}$ is a net of the form $(x_{\varphi(\beta)})_{\beta \in B}$, where $\varphi \colon B \to A$ is increasing ($\gamma \ge \beta$ implies $\varphi(\gamma) \ge \varphi(\beta)$) and cofinal (for every $\alpha \in A$ there is $\beta \in B$ such that $\varphi(\beta) \ge \alpha$).

\begin{theorem}\label{T_nets}
\begin{enumerate}
\item\label{T_nets_1} A set $C \subset X$ is closed if and only if it contains all limits of nets in $C$.
\item\label{T_nets_2} A set $C \subset X$ is compact if and only if every net in $C$ has a subnet with a limit in $C$.
\item\label{T_nets_3} A function $f$ from $X$ to a topological space $Y$ is continuous if and only if $x_\alpha \to x$ implies $f(x_\alpha) \to f(x)$. To be precise, the latter property means that for every $x \in X$ and every net $(x_\alpha)_{\alpha \in A}$ in $X$ that converges to $x$, the net $(f(x_\alpha))_{\alpha \in A}$ in $Y$ converges to $f(x)$.
\end{enumerate}
\end{theorem}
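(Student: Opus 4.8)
The plan is to establish the three items in order, using two standard auxiliary notions: a net $(x_\alpha)_{\alpha\in A}$ is \emph{frequently} in a set $S\subset X$ if for every $\alpha\in A$ there is $\beta\ge\alpha$ with $x_\beta\in S$, and $x\in X$ is a \emph{cluster point} of the net if it is frequently in every neighborhood of $x$. Item~\ref{T_nets_1} is the warm-up. For ``$\Rightarrow$'', if $C$ is closed and $(x_\alpha)\subset C$ converges to $x$, then $x$ cannot lie in the open set $X\setminus C$ (the net would eventually be there), so $x\in C$. For ``$\Leftarrow$'', I would show $\overline C\subset C$: given $x\in\overline C$, the set $\Ncal_x$ of neighborhoods of $x$ is directed by reverse inclusion, and choosing $x_U\in U\cap C$ for each $U\in\Ncal_x$ (possible since $x\in\overline C$) produces a net in $C$ converging to $x$, whence $x\in C$ by hypothesis.

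For item~\ref{T_nets_3} I would argue pointwise, recalling that $f$ is continuous if and only if it is continuous at every $x\in X$. If $f$ is continuous at $x$ and $x_\alpha\to x$, then for any neighborhood $V$ of $f(x)$ the preimage $f^{-1}(V)$ is a neighborhood of $x$, so the net is eventually in $f^{-1}(V)$ and hence $f(x_\alpha)$ is eventually in $V$; thus $f(x_\alpha)\to f(x)$. Conversely, if $f$ fails to be continuous at $x$, pick an open $V\ni f(x)$ with $f^{-1}(V)$ not a neighborhood of $x$; then for each $U\in\Ncal_x$ one may choose $x_U\in U\setminus f^{-1}(V)$, producing a net $x_U\to x$ with $f(x_U)\notin V$ for all $U$, so $f(x_U)\not\to f(x)$.

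Item~\ref{T_nets_2} is the substantive one, and I expect the construction of a convergent subnet from a cluster point to be the main obstacle. I would first prove the auxiliary equivalence that a net has a subnet converging to $x$ if and only if $x$ is one of its cluster points. The ``only if'' direction is routine: given a neighborhood $U$ of $x$ and an index $\alpha_0$, cofinality of the subnet map together with convergence of the subnet produce an index past $\alpha_0$ at which the original net lies in $U$. For the ``if'' direction, I would form the directed set $B=\{(\alpha,U)\in A\times\Ncal_x : x_\alpha\in U\}$, ordered by $(\alpha,U)\ge(\alpha',U')$ iff $\alpha\ge\alpha'$ and $U\subset U'$, with subnet map $\varphi(\alpha,U)=\alpha$; the cluster-point hypothesis is precisely what makes $B$ directed and $\varphi$ cofinal, and one checks directly that the resulting subnet converges to $x$. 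Granting this, $C$ is compact if and only if every net in $C$ has a cluster point in $C$, which I would derive from the finite-intersection-property form of compactness: if $C$ is compact and $(x_\alpha)\subset C$, the relative closures in $C$ of the tails $T_\alpha=\{x_\beta:\beta\ge\alpha\}$ have the finite intersection property by directedness of $A$, hence (by compactness) a common point, readily seen to be a cluster point of the net; conversely, given relatively closed sets $\{F_i\}_{i\in I}$ in $C$ with the finite intersection property, index the finite subsets of $I$ by inclusion, pick $x_\alpha$ in the nonempty intersection of the $F_i$ over $i\in\alpha$, and observe that any cluster point of $(x_\alpha)$ lies in $\bigcap_{i\in I}F_i$, so this set is nonempty and $C$ is compact.
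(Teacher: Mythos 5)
Your proof is correct throughout: item~\ref{T_nets_1} via the neighborhood filter directed by reverse inclusion, item~\ref{T_nets_3} via pointwise continuity and the same neighborhood-net construction, and item~\ref{T_nets_2} via the cluster-point/subnet equivalence combined with the finite-intersection-property characterization of compactness. In particular, the directed set $B=\{(\alpha,U): x_\alpha\in U\}$ with the coordinatewise order and projection map $\varphi$ is exactly the right construction, and both the directedness of $B$ and the cofinality of $\varphi$ do follow from the cluster-point hypothesis as you indicate. The paper, by contrast, offers no argument at all: it simply cites Theorems~11.5, 11.7, 11.8, and~17.4 of Willard's \emph{General Topology}. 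So the difference is not in the mathematics (your proof follows essentially the same canonical route that Willard does) but in exposition: you supply a complete, self-contained proof where the paper defers to a standard reference. For a paper whose real contributions lie elsewhere, the citation is the appropriate choice, but your write-out is a faithful reconstruction of what the cited theorems say and how they are proved. One small presentational note: your subnet construction implicitly uses the Willard/Kelley definition of subnet (monotone plus cofinal), which matches the definition the paper gives in Appendix~\ref{S_nets}; it would be worth flagging that you are using that specific notion, since there are weaker subnet definitions in the literature under which the argument would need small adjustments.
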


\begin{proof}
Parts \ref{T_nets_1} and \ref{T_nets_3} are Theorem~11.7 and Theorem~11.8 of \citet{MR264581}. Part \ref{T_nets_2} follows from Theorem~11.5 and Theorem~17.4 of \citet{MR264581}.
\end{proof}

We now specialize some of the above to Euclidean space $\R^d$. The Heine--Borel theorem states that any closed and bounded subset of $\R^d$ is compact. Therefore, Theorem~\ref{T_nets}\ref{T_nets_2} implies that any bounded net in $\R^d$ has a convergent subnet. We use this fact in the proof of Theorem~\ref{T_finitely_generated}.

For a net $(x_\alpha)_{\alpha \in A}$ in $\R$ one can define the limsup and liminf exactly as for sequences,
\[
\liminf_\alpha x_\alpha = \lim_\alpha \inf_{\beta \ge \alpha} x_\beta
\quad\text{and}\quad
\limsup_\alpha x_\alpha = \lim_\alpha \sup_{\beta \ge \alpha} x_\beta.
\]
That is, $\liminf_\alpha x_\alpha$ is the limit of the net $y_\alpha = \inf_{\beta \ge \alpha} x_\beta$ in the extended reals $[-\infty,\infty]$. This net is increasing in the sense that $\gamma \ge \alpha$ implies $y_\gamma \ge y_\alpha$, and this ensures that the limit exists. We say that $x_\alpha$ converges to infinity if $\liminf_\alpha x_\alpha = \infty$, meaning that $x_\alpha$ is eventually larger than any real number. The case of limsup is analogous. If a net in $\R_+$ does not converge to infinity, meaning that its liminf is finite, it has a bounded subnet. Hence, by Heine--Borel and Theorem~\ref{T_nets}\ref{T_nets_2} as above, it has a further subnet that converges to a limit in $\R_+$. This is again something we make use of in the proof of Theorem~\ref{T_finitely_generated}.

\subsection{Dual pairs and the bipolar theorem}\label{S_dual_pairs}

Here we review some concepts and facts from the classical duality theory of locally convex spaces. All the required material can be found in \cite{sha_wol_99}, see in particular Chapter~IV.

Two real vector space $F$ and $G$ form a \emph{dual pair} (or \emph{dual system}) if there is a bilinear form $\langle \fdot,\fdot \rangle$ on $F \times G$ that separates points in the following sense: if $x \in F$ and $\langle x,y\rangle = 0$ for all $y \in G$, then $x = 0$; and if $y \in F$ and $\langle x,y\rangle = 0$ for all $x \in F$, then $y = 0$. One also says that $\langle\fdot,\fdot\rangle$ places $F$ and $G$ in \emph{(separated) duality}, and writes $\langle F,G\rangle$ as shorthand for the tuple $(F,G,\langle\fdot,\fdot\rangle)$.

Given a dual pair $\langle F,G\rangle$, one defines the \emph{weak topology} $\sigma(F,G)$ as the initial topology generated by the maps $x \mapsto \langle x, y\rangle$, $y \in G$. That is, $\sigma(F,G)$ is the weakest topology on $F$ such that the map $x \mapsto \langle x,y\rangle$ from $F$ to $\R$ is continuous for every $y \in G$. With this topology $F$ is a locally convex space; see \cite{sha_wol_99}, Chapter~II, Section~5.

For any subset $C \subset F$, the \emph{polar} of $C$ (sometimes called the \emph{one-sided polar}) is the set
\[
C^\circ = \{y \in G \colon \langle x, y\rangle \le 1 \text{ for all } x \in C\}.
\]
The \emph{bipolar} of $C$ is the polar of the polar,
\[
C^{\circ\circ} = \{x \in F \colon \langle x, y\rangle \le 1 \text{ for all } y \in C^\circ\}.
\]
The polar and bipolar are always convex. If $C$ is a cone, meaning that $\lambda C \subset C$ for every $\lambda \in [0,\infty)$, then the polar and bipolar are also cones and can be written
\begin{align*}
C^\circ &= \{y \in G \colon \langle x, y\rangle \le 0 \text{ for all } x \in C\}, \\
C^{\circ\circ} &= \{x \in F \colon \langle x, y\rangle \le 0 \text{ for all } y \in C^\circ\}.
\end{align*}
We make extensive use of the following fundamental result, which we state here for convex cones. This follows from Theorem~1.5 of Section~IV in \cite{sha_wol_99} along with the fact that the convex hull of $\{0\} \cup C$ is just $C$ itself when $C$ is a convex cone.

\begin{theorem}[Bipolar theorem] \label{T_bipolar_theorem}
If $C \subset F$ is a convex cone, then the bipolar $C^{\circ\circ}$ is the $\sigma(F,G)$-closure of $C$.
\end{theorem}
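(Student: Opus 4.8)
The plan is to prove the two inclusions $C^{\circ\circ}\subseteq\overline C$ and $\overline C\subseteq C^{\circ\circ}$ separately, where $\overline C$ is the $\sigma(F,G)$-closure of $C$, and where I take $C$ to be nonempty (so that, being a cone, $0\in C$; the edge case $C=\emptyset$ is degenerate and can be set aside, or the convention that cones contain $0$ adopted).

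The inclusion $\overline C\subseteq C^{\circ\circ}$ is the easy direction. For each fixed $y\in G$ the map $x\mapsto\langle x,y\rangle$ is $\sigma(F,G)$-continuous by the very definition of the weak topology, so the set $\{x\in F\colon\langle x,y\rangle\le 0\}$ is a $\sigma(F,G)$-closed convex cone. Since $C$ is a convex cone, $C^{\circ\circ}=\bigcap_{y\in C^\circ}\{x\colon\langle x,y\rangle\le 0\}$ is an intersection of such sets, hence itself a $\sigma(F,G)$-closed convex cone; and it contains $C$ directly from the definition of the polar. Being a closed set containing $C$, it contains $\overline C$.

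For the reverse inclusion I would argue by contraposition, using Hahn--Banach separation in the locally convex space $(F,\sigma(F,G))$. Fix $x_0\notin\overline C$. The set $\overline C$ is a nonempty $\sigma(F,G)$-closed convex set, and $\{x_0\}$ is a disjoint compact convex set, so the separation theorem yields a $\sigma(F,G)$-continuous linear functional $\ell$ on $F$ and a real number $\alpha$ with $\ell(x)\le\alpha<\ell(x_0)$ for all $x\in\overline C$. A standard fact about weak topologies (e.g.\ \cite{sha_wol_99}, Chapter~IV) is that every $\sigma(F,G)$-continuous linear functional on $F$ is of the form $x\mapsto\langle x,y\rangle$ for some $y\in G$; fix such a $y$. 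Evaluating the inequality at $x=0\in\overline C$ gives $\alpha\ge 0$, and applying it to $tx$ for $x\in C\subseteq\overline C$ and letting $t\to\infty$ forces $\langle x,y\rangle\le 0$ (otherwise $t\langle x,y\rangle\to\infty>\alpha$). Hence $y\in C^\circ$, while $\langle x_0,y\rangle=\ell(x_0)>\alpha\ge 0$ shows $x_0\notin C^{\circ\circ}$. Thus $C^{\circ\circ}\subseteq\overline C$, which completes the proof.

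The only genuinely nontrivial ingredients are the two classical facts I invoked: the Hahn--Banach separation theorem for a point and a disjoint closed convex set in a locally convex space, and the identification of the topological dual of $(F,\sigma(F,G))$ with $G$. Both are available in \cite{sha_wol_99} (Chapters~II and~IV), the reference the paper already cites, so the ``hard part'' is not a deep argument but rather the correct bookkeeping that turns a generic separating functional into an element of the one-sided polar $C^\circ$ — namely exploiting $0\in C$ to get $\alpha\ge 0$ and the cone scaling $tx$, $t\to\infty$, to get the sign condition $\langle x,y\rangle\le 0$ on all of $C$.
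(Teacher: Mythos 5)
Your proof is correct. Note, though, that the paper does not actually prove this statement: it cites the general bipolar theorem (Theorem~1.5 of Section~IV in \cite{sha_wol_99}, which says $C^{\circ\circ}$ is the $\sigma(F,G)$-closed convex hull of $\{0\}\cup C$) and observes that for a convex cone this closed convex hull is just $\overline C$. You instead prove the cone version directly, and your two ingredients --- strict Hahn--Banach separation of a point from a disjoint nonempty weakly closed convex set in the locally convex space $(F,\sigma(F,G))$, plus the identification of the topological dual of $(F,\sigma(F,G))$ with $G$ --- are exactly what underlies the cited result, so mathematically the routes coincide; what your version buys is self-containedness and the explicit bookkeeping (using $0\in C$ to get $\alpha\ge 0$ and the scaling $tx$, $t\to\infty$, to convert the separating functional into an element of the one-sided polar) that the citation hides. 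Two small points are worth keeping: first, your caveat about $C=\emptyset$ is genuinely needed, since then $C^{\circ\circ}=\{0\}\ne\overline C$, and the paper's phrase ``the convex hull of $\{0\}\cup C$ is just $C$'' silently assumes $0\in C$ as well; second, the separation argument needs $\sigma(F,G)$ to be Hausdorff, which holds because the pairing separates points of $F$ --- a fact the paper verifies at the start of the proof of Theorem~\ref{T_abs_rep} and that you should acknowledge you are using.
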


\subsection{The Krein--\v{S}mulian theorem} \label{S_krein_smulian}

Let $E$ be a Banach space with norm $\|\cdot\|$. Its dual space $E'$ consists of all bounded linear functionals on $E$, and is equipped with the dual norm given by $\|\varphi\|' = \sup \{ \varphi(x) \colon x \in E, \ \|x\| \le 1\}$. The weak$^*$ topology on $E'$ is the initial topology generated by the maps $\varphi \mapsto \varphi(x)$ from $E'$ to $\R$, where $x \in E$. This is also the topology $\sigma(E',E)$ coming from the dual pair $\langle E',E\rangle$ with bilinear form $\langle \varphi, x \rangle = \varphi(x)$; see Section~\ref{S_dual_pairs}. The following result plays a crucial role in Section~\ref{S_sub_psi}. For a proof, see Theorem~12.1 in \cite{con_90}.

\begin{theorem}[Krein--\v{S}mulian] \label{T_Krein_Smulian}
Let $(E,\|\fdot\|)$ be a Banach space with dual space $(E',\|\fdot\|')$. A convex subset $C \subset E'$ is weak$^*$ closed if and only its intersection with every dual ball is weak$^*$ closed, that is, $C \cap \{\varphi \in E' \colon \|\varphi\|' \le r\}$ is weak$^*$ closed for all $r \in (0,\infty)$.
\end{theorem}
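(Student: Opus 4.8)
The plan is to establish the substantive implication: if the convex set $C$ meets every dual ball $B'_r := \{\varphi \in E' : \|\varphi\|' \le r\}$ in a weak$^*$ closed set, then $C$ is weak$^*$ closed. The converse is immediate, since each $B'_r$ is weak$^*$ closed---being an intersection over $x \in E$ of the weak$^*$ closed slabs $\{\varphi : |\varphi(x)| \le r\|x\|\}$---so $C \cap B'_r$ is an intersection of two weak$^*$ closed sets. First I would reduce to separating the origin: replacing $C$ by $C - \varphi_0$ preserves the hypothesis, because $(C-\varphi_0) \cap B'_r = (C \cap (B'_r + \varphi_0)) - \varphi_0$ and $C$ intersected with the bounded weak$^*$ closed set $B'_r + \varphi_0$ is weak$^*$ closed by assumption. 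So it suffices to show: if $0 \notin C$, then $C$ misses a neighborhood of $0$ in a suitable topology on $E'$.

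The heart of the proof is an inductive construction of a \emph{compact} set $A \subset E$ whose one-sided polar $A^\circ = \{\varphi \in E' : \varphi(x) \le 1 \text{ for all } x \in A\}$ is disjoint from $C$. I would build finite ``batches'' $F_0, F_1, F_2, \dots \subset E$, with $F_k$ contained in the $(1/k)$-ball of $E$ for $k \ge 1$ (while $F_0$ may be a single vector of arbitrary norm), maintaining the invariant
\[
C \cap B'_{k+2} \cap (F_0 \cup \cdots \cup F_k)^\circ = \emptyset .
\]
For $k=0$: the set $C \cap B'_2$ is convex and, by Banach--Alaoglu, weak$^*$ compact; since $0 \notin C \cap B'_2$, weak$^*$ Hahn--Banach separation yields a single $x_0 \in E$ with $\varphi(x_0) > 1$ for all $\varphi \in C \cap B'_2$, so $F_0 = \{x_0\}$ works. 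For the step from $k-1$ to $k$: the set $D := C \cap B'_{k+2} \cap (F_0 \cup \cdots \cup F_{k-1})^\circ$ is weak$^*$ compact, and the stage-$(k-1)$ invariant forces $\|\varphi\|' > k+1$ for every $\varphi \in D$; hence each $\varphi \in D$ admits $x_\varphi \in E$ with $\|x_\varphi\| < 1/k$ and $\varphi(x_\varphi) > 1$. The weak$^*$ open sets $\{\psi : \psi(x_\varphi) > 1\}$ cover the weak$^*$ compact $D$, so finitely many suffice, and letting $F_k$ consist of the corresponding $x_\varphi$'s restores the invariant. Finally set $A = \{0\} \cup \bigcup_{k \ge 0} F_k$: the shrinking radii make $A$ compact, and since every $\varphi \in C$ has finite norm it lies in some $B'_{k+2}$, so the invariant gives $\varphi \notin (F_0 \cup \cdots \cup F_k)^\circ \supseteq A^\circ$; thus $A^\circ \cap C = \emptyset$.

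To conclude, note that $A^\circ$ contains $(A \cup (-A))^\circ$, hence is a neighborhood of $0$ for the topology $\tau$ of uniform convergence on the norm-compact subsets of $E$. Running the construction on $C - \varphi_0$ for each $\varphi_0 \notin C$ shows that $E' \setminus C$ is $\tau$-open, i.e., $C$ is $\tau$-closed. Since norm-compact subsets of the Banach space $E$ have norm-compact (hence weakly compact) closed absolutely convex hulls by Mazur's theorem, $\tau$ lies between the weak$^*$ topology $\sigma(E',E)$ and the Mackey topology $\tau(E',E)$; by the Mackey--Arens theorem all such topologies have the same closed convex sets, so the $\tau$-closed convex set $C$ is in fact $\sigma(E',E)$-closed, which is the assertion.

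The step I expect to be the main obstacle is the inductive construction: one must synchronize the radii $k+2$ of the balls $B'_{k+2}$ with the norm bounds $1/k$ on the batches $F_k$ so that, simultaneously, (i) the invariant at stage $k-1$ confines the leftover set $D$ to functionals of large norm---precisely what lets the separating vectors $x_\varphi$ be taken small---and (ii) the $F_k$ shrink fast enough that $\{0\} \cup \bigcup_k F_k$ is compact. Once this bookkeeping is in place, the remaining ingredients (Banach--Alaoglu, weak$^*$ Hahn--Banach separation, and the Mackey--Arens endgame) are routine.
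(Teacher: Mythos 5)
The paper does not prove this result; it simply cites Conway's \emph{A Course in Functional Analysis}, Theorem~12.1, so there is no in-paper argument to compare against. Your proof is correct, and it is in substance the classical textbook proof that the cited reference (and most others) give: reduce via translation to $0\notin C$, after checking that the hypothesis is stable under translating $C$; construct by induction finite batches $F_k$ lying in the $1/k$-ball of $E$ whose cumulative polar misses $C\cap B'_{k+2}$; observe that $A=\{0\}\cup\bigcup_k F_k$ is norm-compact with $A^\circ\cap C=\emptyset$; and then pass from ``$C$ is closed for the topology $\tau$ of uniform convergence on norm-compact sets'' to ``$C$ is weak$^*$ closed'' via Mazur's theorem and Mackey--Arens. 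All of the verifications go through: Banach--Alaoglu together with the hypothesis makes $C\cap B'_{k+2}$, hence $D$, weak$^*$-compact; the stage-$(k-1)$ invariant forces $\|\varphi\|'>k+1$ on $D$, which is exactly what lets the separating vectors be taken in the $1/k$-ball; the finite subcover argument is sound; $A$ is closed and totally bounded in the Banach space $E$, hence compact; and the Hahn--Banach separations are legitimate because the weak$^*$-continuous linear functionals on $E'$ are precisely the evaluations at points of $E$. One very minor point worth making explicit: at a given stage $C\cap B'_2$ or $D$ may be empty, in which case one takes the corresponding $F_k$ to be $\{0\}$ (or empty) and the invariant holds vacuously; this is harmless but should be stated.
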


\subsection*{Acknowledgments}
AR acknowledges support from the Sloan Fellowship and grant NSF DMS-2310718.
ML acknowledges support by the National Science Foundation under grant
NSF DMS-2510965.

\bibliography{main}
\bibliographystyle{plainnat}

\end{document}